\newcommand{\R}{\mathbb{R}}
\def\bl{\boldsymbol \lambda}
\def\blk{\boldsymbol \lambda^{(k)}}
\def\bu{\mathbf u}
\def\bb{\mathbf b}
\def\bz{\mathbf z}
\def\b0{\mathbf 0}
\def\bi{\boldsymbol{i}}
\def\Veta{\boldsymbol{\lambda}}
\def\Vpsi{\boldsymbol{\psi}}
\def\FZcomment[#1]{\textcolor{black}{#1}}
\def\GLcomment[#1]{\textcolor{red}{#1}}
\def\HIcomment[#1]{\textcolor{black}{#1}}
\newcommand{\name}{{UPenMM}}  %  Nome del sistema da discutere
\crefname{hypothesis}{Hypothesis}{Hypotheses}
\title{Uniform multi-penalty regularization for linear ill-posed inverse problems\thanks{Submitted to the editors DATE.
\funding{This work was partially supported by the Istituto Nazionale di Alta Matematica, Gruppo
Nazionale per il Calcolo Scientifico (INdAM-GNCS).}}}
\author{Villiam Bortolotti\thanks{Department of Civil, Chemical, Environmental, and Materials Engineering, University of Bologna, Via Terracini, 28,  40131 Bologna, Italy (\texttt{villiam.bortolotti@unibo.it})}
 \and  Germana Landi\thanks{Department of Mathematics, University of Bologna,
  Piazza di Porta S.~Donato~5, 40126 Bologna, Italy
  (\texttt{germana.landi@unibo.it}).}  
\and Fabiana Zama
\thanks{Department of Mathematics, University of Bologna,
  Piazza di Porta S.~Donato~5, 40126 Bologna, Italy
  (\texttt{fabiana.zama@unibo.it}).}
}
\begin{document}

\maketitle

% REQUIRED
\begin{abstract}
This study examines, in the framework of variational regularization methods, a multi-penalty regularization approach which builds upon the Uniform PENalty (UPEN) method, previously proposed by the authors for Nuclear Magnetic Resonance  data processing. The paper introduces two iterative methods, UpenMM and GUpenMM, formulated within the Majorization-Minimization  framework. These methods are designed to identify appropriate regularization parameters and solutions for linear inverse problems utilizing multi-penalty regularization. The paper demonstrates the convergence of these methods and illustrates their potential through numerical examples, showing the practical utility of point-wise regularization terms in solving various inverse problems.
\end{abstract}

% REQUIRED
\begin{keywords}
Linear ill-posed problems, multi-penalty regularization, Uniform Penalty Principle, Majorization-Minimization Methods, Balancing Principle.
\end{keywords}
%
% REQUIRED
\begin{MSCcodes}
65K10, %65K10  	Numerical optimization and variational techniques [See also 49Mxx, 93-08]
47A52, %Linear operators and ill-posed problems, regularization 
65R30, %	Numerical methods for ill-posed problems for integral equations
65Z05, %Applications to the sciences
\end{MSCcodes}
%
%========================================================================================
\section{Introduction\label{sec:intro}}
Variational regularization methods form the foundation for addressing a wide range of linear inverse ill-posed problems that can be expressed as:
\begin{equation*}
  A\bu + \mathbf{e} = \bb
\end{equation*}
where $\bb \in\mathbb{R}^M$ represents noisy data, $\mathbf{e}\in\mathbb{R}^M$ is a Gaussian noise vector, $A\in \mathbb{R}^{M \times N}$, $M\geq N$, is a linear operator and $\bu\in\mathbb{R}^N$ is the exact object to be recovered (see for instance  \cite{ito2014inverse} and \cite{engl1996regularization} for a survey). In the following, we assume $A$ to be full rank. Variational methods involve minimizing an objective functional that comprises a data fidelity term,
denoted as $\phi(\bu)$, and a regularization term, denoted as $\mathcal{R}_{\lambda}(\bu)$, i.e.:
$$\min\limits_{\bu\in \Omega} \;  \{ \phi(\bu) + \mathcal{R}_{\lambda}(\bu) \},$$
where $\Omega$ is a compact subset of $\mathbb{R}^N$ used to 
impose some physical constraints, such as, for example, non-negativity.
The data fidelity term reflects the problem formulation and accounts for the uncertainty in the measured data. On the other hand, the regularization term incorporates prior knowledge about the unknown solution, such as its smoothness or sparsity. The impact of the regularization term is governed by a scalar value $\lambda$ called the regularization parameter. 

To capture diverse and contrasting characteristics of solutions, we investigate  a multi-penalty regularization approach setting 
\begin{equation}\label{eq:generalR}
  \mathcal{R}_\lambda(\bu) = \sum_{i=i}^p \lambda_i \psi_i(\bu)
\end{equation}
where $\psi_i(\bu)$, $i=1, \ldots, p$ are regularization functions
and we propose a convergent numerical method to compute both $\bu$ and $\lambda_i$. In our experiments, we focus on  the following point-wise penalties 
\begin{equation}\label{eq:upen}
\psi_i(\bu)=(L\bu)_i^2+ \epsilon, \quad i=1,\ldots,N, \quad p=N
\end{equation}
where $L\in\mathbb{R}^{N \times N}$ represents the discretization of the second-order derivative operator \HIcomment[with periodic boundary conditions,] and $\epsilon$ is a positive constant. %
This approach was successfully proposed for the first time in 1998 in the context of one-dimen\-sio\-nal Nuclear Magnetic Resonance (NMR) data processing, together with a heuristic numerical procedure for its realization, named Uniform PENalty (UPEN) method (Borgia et al. \cite{borgia1998uniform, borgia2000uniform}). 
The high quality of the results obtained, especially in the study of porous media properties, has been widely recognized by the scientific community, leading to the conference series Magnetic Resonance in Porous Media,  started in Bologna in 1990 and yielding in 2012 the UPenWin software tool distributed by the University of Bologna (\url{https://site.unibo.it/softwaredicam/en/software/upenwin}). 
In 2016, an extension to the two-dimensional case was developed, beginning the formalization of UPEN within the mathematical framework of inverse problems \cite{bortolotti2016uniform}. The first formulation of the Uniform Penalty Principle was introduced, with the proof that solutions conforming to this principle are regularized solutions of the data fitting model. Subsequent extensions have considered the case of multi-penalty regularization including $L_1$-based terms and more software tools \cite{bortolotti2019upen2dtool,bortolotti2022mupen2dtool} were created to tackle problems in the field of NMR relaxometry.
What has been missing until now is a formal analysis of the numerical method to prove its convergence properties. This is the main contribution of the present work. To obtain our main result, initially, contributions previously developed in the mathematical field of inverse problems are leveraged, particularly from the results of Ito et al. that, starting in 2011, introduce and analyze the Balancing Principle (BP) for multi-penalty regularization. 
This principle, analyzed in \cite{ito2011multi} and \cite{ito2014multi} when $p=2$, is extended here to the case of point-wise penalties \eqref{eq:upen} and box constraints, and the solution and regularization parameters are characterized as critical points of an Augmented Tikhonov functional. In this way, the Uniform Penalty Principle is recognized as a particular case of the generalized Balancing Principle, and the properties of the resulting variational model are demonstrated. Then the heuristic numerical procedure UPEN is framed into the class of the Majorization-Minimization (MM) methods. This analysis allows us to establish the  convergence of UPEN to the solution of the variational model, satisfying the Uniform Penalty Principle.

Multi-penalty regularization has gained growing interest in recent literature, but most of the authors consider the case of two-penalty terms. We remark that the famous elastic regression used in Statistics \cite{zou_hastie_2005} is a multi-penalty regularization method combining the $L_1$ and $L_2$ penalties of the Lasso \cite{tibshirani_1996} and Ridge \cite{hoerl_kennard_1970} methods. Moreover, most of the literature is devoted to the problem of developing a suitable parameter choice rule.
Lu, Pereverzev et al. \cite{lu2011multi,lu2010discrepancy}  thoroughly investigate the case of two $L_2$-based terms and present a modified Discrepancy Principle for the computation of the two regularization parameters together with its numerical implementation.
The parameter choice issue is also discussed in \cite{belge2002efficient}, where a multi-parameter generalization of the  L-curve criterium is proposed, and in \cite{brezinski2003multi}, where an approach based on the Generalized Cross Validation method is suggested.
Reichel and Gazzola \cite{gazzola2016new} consider regularization terms of the form
\begin{equation}\label{}
  \psi_i(\bu)=\|D_i\bu\|^2, \qquad i=1, \ldots, p
\end{equation}
where \FZcomment[$D_i\in \mathbb{R}^{d_i\times N}$ ($d_i \leq N$)] are proper regularization matrices and describe an approach to determine the regularization parameters based on the Discrepancy Principle focusing on the case $p=2$.
Fornasier et al. \cite{fornasier2014parameter} propose a modified version of the discrepancy principle for multi-penalty regularization and provide a theoretical justification.
Multi-penalty regularization is also considered for unmixing problems with two penalty terms based on $L_q$ and $L_p$ norms, $0\leq q<2$ and $2\leq p<\infty$, in \cite{naumova2014minimization,naumova2013multi, kereta2019linear,kereta2021computational} and in \cite{grasmair2020adaptive}, where the penalty terms are based on the $L_1$ and $L_2$ norms.
Finally, in \cite{wang2013multi}, two-penalty regularization, with $L_0$ and $L_2$ penalty terms, is considered for nonlinear ill-posed problems and its regularizing properties are analyzed. 
Ito et. al. in \cite{ito2011multi} and subsequent papers   \cite{Ito2011_tik} and \cite{ito2014multi}, 
introduce the  Balancing Principle to handle multiple-penalty regularization and present fixed-point-like algorithms for the case of two penalties.

\paragraph{Contributions}\hfil \\
Our paper makes the following contributions:
\begin{itemize}
    \item Proposal of two iterative methods, named UpenMM and GUpenMM, developed within the framework of MM methods, to compute the regularization parameters and solutions of linear inverse problems using multi-penalty regularization.
    GUpenMM is a general scheme incorporating heuristic rules successfully applied in NMR data processing.
    \item Proof of the convergence of UpenMM and GUPenMM to the solution and regularization parameters satisfying the Uniform Penalty Principle.
    \item Presentation of numerical examples demonstrating the robustness and effectiveness of the point-wise regularization terms \eqref{eq:upen} in solving general inverse problems in various applications.
\end{itemize}
The remainder of this paper is organized as follows. Section~\ref{sec:model} presents the Balancing and Uniform Penalty principles and their properties.  Section~\ref{sec:method}
presents UpenMM and GUpenMM for the solution of the proposed  multi-penalty regularization problem and proves its convergence.
The results of a two-dimensional numerical experiment are presented in Section~\ref{sec:experiments}, while the one-dimensional tests are discussed in the supplementary materials. Finally, the conclusions are provided in Section~\ref{sec:conclusions}.
%
%----------------------------------------------------------------------------------------------
\section{Derivation of the uniform  multi-penalty regularization}\label{sec:model}
In this section, we present a generalization of the Balancing Principle, originally introduced in \cite{Ito2011_tik} and  initially formulated for a single penalty term. This principle has been further extended to accommodate multiple penalties in subsequent works \cite{ito2011multi,ito2014multi}, with a particular focus on the case of two penalty terms.
Here, we provide a detailed analysis of multipenalty regularization specifically for the case where there are more than two penalty terms, for example representing point-wise second-order smoothing terms as in \eqref{eq:upen}.

We first present the regularization problem in a Bayesian framework, providing a statistical interpretation to the choice of regularization parameters and to the structure of the Augmented Tikhonov problem \eqref{eq:Atikh}. %
Theorem \ref{teo:J} and Corollary \ref{cor:cor1} allow us to establish the existence of the minimum of the Augmented Tikhonov functional. Then,
introducing the value function \eqref{eq:val_f}, we show in Theorem \ref{teo1} 
that the regularization parameters are critical points of a function $\Phi_{\gamma}(\Veta)$. %, which balances different components of the regularization scheme. 
This theorem links theoretical insights to practical computation, showing how to compute the regularization parameters that balance data fidelity with regularization terms effectively.

To start, we introduce the vector of the penalty parameters $\Veta\in \mathbb{R}^p$ and the vector penalty function $\Vpsi: \mathbb{R}^N \rightarrow \mathbb{R}^p$ such that
\begin{gather*}
  \Veta = (\lambda_1, \lambda_2, \ldots, \lambda_p)^T, \quad \lambda_i >0, \; i=1,\ldots,p  \\
  \Vpsi(\bu) = (\psi_1(\bu), \ldots, \psi_p(\bu))^T, \quad \psi_i(\bu) \geq \epsilon, \; i=1,\ldots,p
\end{gather*}
where $\psi_i$ are positive continuous functions and $\epsilon$ is a small positive constant.
%
%ok \textcolor[rgb]{1.00,0.00,0.00}{\textbf{ATTENZIONE:} era $\lambda_i \geq 0$, i moltiplicatori possono o non possono essere nulli? No, direi. Ho messo che $\psi_i(\bu)> \epsilon$ e non $\psi_i(\bu)> 0$ cos\`{\i} siamo sicure che al limite le funzioni di regolarizzazione non diventano mai nulle.} 
Assuming the experimental data $\bb$ to be corrupted by Gaussian white noise, we consider the following data-fit  function
\begin{equation*}
  \phi(\bu) = \frac{1}{2}\| A\bu-\bb \|^2 .
\end{equation*}
%Defining the vector penalty function $\Vpsi: \mathbb{R}^N \rightarrow \mathbb{R}^N$ such that $\Vpsi(\bu) = (\psi_1(\bu), \ldots, \psi_N(\bu))^T$,
%we rewrite \eqref{eq:model} as
%\begin{equation}\label{eq:Ju}
% \bu_{\Veta} = \arg\min_{\bu \in \Omega} J_{\Veta}(\bu), \qquad J_{\Veta}(\bu) \equiv  \phi(\bu) + {\Veta}^T \Vpsi(\bu).
% \end{equation}
%-------------------------------------------------------------------------------------------
By employing a hierarchical Bayesian approach \cite{gelman2013bayesian},
the information about the unknown solution is contained in the Posterior Probability Density Function  $P(\bu, \Veta, \tau | \bb)$ where  $\tau=1/\sigma^2$ is the inverse variance of data noise. Extending the approach in \cite{jin2008augmented} by applying it to $p$ penalty parameters $\lambda_i$ and regularization functions $\psi_i$,
we consider the  critical points
of  the following Augmented-Tikhonov problem:
\begin{multline}\label{eq:Atikh}
\min_{(\bu, \Veta, \tau)} \; \left \{\frac{\tau}{2} \| A \bu - \bb \|^2 + \Veta^T \Vpsi(\bu) +
\left [\beta_0 \tau - \left (\frac{M}{2} + \alpha_0-1 \right )\ln(\tau) \right ]+ \right.
\\
\left. \sum_{i=1}^p \left [\beta_i \lambda_i - \left ( \frac{N}{2} + \alpha_i -1 \right ) \ln(\lambda_i) \right ]\right \}
\end{multline}
% ok!\textcolor[rgb]{1.00,0.00,0.00}{\textbf{ATTENZIONE}: nel termine con $\alpha_o$ c'era M ma secondo me qui ci vuole M (vedi articolo Ito ''A new choice..'')!!}
%
where $(\alpha_i, \beta_i)$, with $i\geq 1$, and $(\alpha_0, \beta_0)$ are parameter pairs representing the prior distributions. Following \cite{jin2008augmented}, these distributions are assumed to be Gamma distributions, and they correspond to the hyperparameters $\lambda_i$ and $\tau$ respectively.
%

%To maintain text fluency, we have included the details of the derivation of the the Augmented-Tikhonov problem in Appendix \ref{App_B}.
In the following theorem, we prove that \eqref{eq:Atikh} is well-posed.
%
%========================== TEO 1
\begin{theorem}\label{teo:J}
There exists at least one minimizer to the Augmented-Tikhonov function $\mathcal{J}$:
\begin{equation}\label{eq:J}
\mathcal{J}(\bu, \Veta, \tau) \equiv \frac{\tau}{2} \| A \bu - \bb \|^2 + \Veta^T \Vpsi(\bu) + \left [\beta_0 \tau - \alpha_0'\ln(\tau) \right ]+
\sum_{i=1}^p \left [\beta_i \lambda_i - \alpha_i' \ln(\lambda_i) \right ]
\end{equation}
with $\alpha_0'=\left (\frac{M}{2} + \alpha_0-1 \right )$, 
$\alpha_i'=\left (\frac{p}{2} + \alpha_i-1 \right )$, $i=0, \ldots, p$. 
\end{theorem}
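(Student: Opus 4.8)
The plan is to establish existence of a minimizer via the direct method of the calculus of variations: show that $\mathcal{J}$ is bounded below, that its sublevel sets are (after restricting to the relevant region) bounded and closed, and that $\mathcal{J}$ is lower semicontinuous, so that a minimizing sequence admits a convergent subsequence whose limit is a minimizer. The domain to keep in mind is $\bu\in\Omega$ (compact), $\Veta\in(0,\infty)^p$, $\tau\in(0,\infty)$; the only directions in which coercivity must be checked carefully are $\tau\to 0^+$, $\tau\to+\infty$, and $\lambda_i\to 0^+$ or $\lambda_i\to+\infty$, since $\bu$ already lives in a compact set.

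First I would isolate the scalar functions $g_0(\tau)=\beta_0\tau-\alpha_0'\ln\tau$ and $g_i(\lambda_i)=\beta_i\lambda_i-\alpha_i'\ln\lambda_i$. Assuming the natural positivity hypotheses $\beta_j>0$ and $\alpha_j'>0$ (inherited from the Gamma priors, as in \cite{jin2008augmented}), each such function is strictly convex on $(0,\infty)$, attains a finite minimum at $\beta_j^{-1}\alpha_j'$, and tends to $+\infty$ both as the argument $\to 0^+$ (because of the $-\ln$ term) and as the argument $\to+\infty$ (because of the linear term). Hence each $g_j$ is bounded below and coercive on $(0,\infty)$. The remaining terms $\frac{\tau}{2}\|A\bu-\bb\|^2$ and $\Veta^T\Vpsi(\bu)=\sum_i\lambda_i\psi_i(\bu)$ are nonnegative on the domain (here I use $\psi_i(\bu)\ge\epsilon>0$ and $\lambda_i>0$). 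Adding up, $\mathcal{J}$ is bounded below on the whole domain, and moreover $\mathcal{J}(\bu,\Veta,\tau)\to+\infty$ whenever any $\lambda_i\to 0^+$, any $\lambda_i\to+\infty$, $\tau\to 0^+$, or $\tau\to+\infty$. Consequently every sublevel set $\{\mathcal{J}\le c\}$ is contained in $\Omega\times[\underline{\lambda},\overline{\lambda}]^p\times[\underline{\tau},\overline{\tau}]$ for suitable $0<\underline{\lambda}\le\overline{\lambda}<\infty$ and $0<\underline{\tau}\le\overline{\tau}<\infty$ depending on $c$; in particular it is bounded, and its closure is a compact subset of the open domain.

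Next I would take a minimizing sequence $(\bu^{(k)},\Veta^{(k)},\tau^{(k)})$ with $\mathcal{J}(\bu^{(k)},\Veta^{(k)},\tau^{(k)})\to\inf\mathcal{J}$; by the preceding paragraph it eventually lies in the compact box just described, so by Bolzano–Weierstrass a subsequence converges to some $(\bu^\ast,\Veta^\ast,\tau^\ast)$ with $\bu^\ast\in\Omega$ (closed), $\lambda_i^\ast>0$, $\tau^\ast>0$. Since $\Vpsi$ is continuous (the $\psi_i$ are continuous positive functions, and for the point-wise choice \eqref{eq:upen} they are even polynomial) and all the other terms of $\mathcal{J}$ are continuous in $(\bu,\Veta,\tau)$ on the open domain, $\mathcal{J}$ is continuous there, hence $\mathcal{J}(\bu^\ast,\Veta^\ast,\tau^\ast)=\lim_k\mathcal{J}(\bu^{(k)},\Veta^{(k)},\tau^{(k)})=\inf\mathcal{J}$, so $(\bu^\ast,\Veta^\ast,\tau^\ast)$ is the desired minimizer.

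The main obstacle is not any single estimate but making the coercivity bookkeeping airtight: one must verify that the blow-up of the $-\ln\tau$ and $-\ln\lambda_i$ terms near zero is not cancelled by the other terms (it is not, since those are nonnegative), and that the linear terms $\beta_0\tau$ and $\beta_i\lambda_i$ dominate at infinity uniformly in $\bu$ (they do, again because the cross terms are nonnegative on $\Omega$, using $\psi_i\ge\epsilon$). A minor point worth stating explicitly is the standing assumption $\alpha_j'>0$ for all $j$ — equivalently $\alpha_0>1-M/2$ and $\alpha_i>1-p/2$ — without which the scalar functions need not be coercive at $0$; under the Gamma-prior modeling this is automatic. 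Once these sign/growth facts are recorded, the direct method closes the argument immediately.
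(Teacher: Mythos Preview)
Your overall strategy---continuity plus lower bound plus coercivity, then a minimizing-sequence/compactness argument---is the same direct method the paper uses. The execution differs in two places worth flagging.

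First, the theorem as stated is on $\mathbb{R}^N\times\mathcal V\times(0,\infty)$, with no compactness assumption on $\bu$; the restriction to a compact $\Omega$ is the content of Corollary~\ref{cor:cor1}, not of Theorem~\ref{teo:J}. The paper obtains coercivity in $\bu$ from the quadratic term: since $A$ has full column rank, $\|A\bu-\bb\|^2$ is a positive definite quadratic form in $\bu$ and hence coercive, and the paper bounds $\mathcal J-c_0\ge\frac{\tau}{2}\|A\bu-\bb\|^2$. Your argument, by contrast, never needs this because you confine $\bu$ to $\Omega$ from the start; so as written you are proving the corollary rather than the theorem.

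Second, for coercivity as $\lambda_i\to\infty$ you rely on the hypothesis $\beta_i>0$. The paper avoids that assumption by first using $\psi_i(\bu)\ge\epsilon$ to absorb the cross term into the lower bound, obtaining
\[
\mathcal J(\bu,\Veta,\tau)\ \ge\ \bigl[\beta_0\tau-\alpha_0'\ln\tau\bigr]+\sum_{i=1}^p\bigl[(\beta_i+\epsilon)\lambda_i-\alpha_i'\ln\lambda_i\bigr],
\]
so that each scalar function is coercive even when $\beta_i=0$. This matters because the noninformative choice $\beta_i=0$ is exactly what is used downstream to derive the Balancing Principle \eqref{eq:BPp}. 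Your nonnegativity observation $\lambda_i\psi_i(\bu)\ge\epsilon\lambda_i$ already contains the fix; you simply need to route it into the coercivity estimate rather than discarding it as ``nonnegative.''
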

\begin{proof}
The functional $\mathcal{J}(\bu, \Veta, \tau) $ is continuous on $\mathbb{R}^{N} \times  \mathcal{V} \times (0, \infty)$
where
$\mathcal{V}= \left \{ \mathbf{z} \in \mathcal{R}^{p}: z_i > 0, \ i=1, \ldots, p\right \}$. Therefore it is sufficient to prove that it is bounded from below and it is coercive, i.e. it tends to infinity when approaching the boundary of $\mathbb{R}^{N} \times  \mathcal{V} \times (0, \infty)$.
To prove that  $\mathcal{J}(\bu, \Veta, \tau)$ is bounded from below, we observe that
$$
\mathcal{J}(\bu, \Veta, \tau) \geq \left [\beta_0 \tau - \alpha_0'\ln(\tau) \right ]+
\sum_{i=1}^p \left [\HIcomment[(\beta_i+\epsilon)] \lambda_i - \alpha_i' \ln(\lambda_i) \right ] \equiv \nu(\tau)+ \zeta(\Veta) .
$$
We prove  that $\zeta(\Veta)$ is convex on $\mathcal{V}$. Let $\mathcal{H}$ be  the  Hessian matrix of $\zeta(\Veta)$,
we observe that it is diagonal and positive definite
 i.e.
$$ \mathcal{H}_{k,j}=\frac{\partial^2}{\partial \lambda_k \partial \lambda_j} \zeta(\Veta) = \left \{
\begin{aligned}
& \frac{\alpha_k'}{\lambda_k^2} > 0, & k=j; \\
& 0, & k \neq j;
\end{aligned} \right .
$$
therefore  the critical point $\hat \Veta$ of $\zeta$:
$$\frac{\partial \zeta}{\partial \lambda_i}(\hat \Veta) =0, \quad \text{i.e.} \quad \hat \lambda_i = \frac{\alpha_i'}{\HIcomment[\beta_i+\epsilon]} >0, \ i=1, \ldots, p$$
defines its minimum, i.e. $\zeta(\Veta) \geq \zeta(\hat \Veta)$, $\forall \Veta \neq \hat \Veta$.
Similarly, $\nu(\tau)$ has a minimum $\hat \tau = \frac{\alpha_0'}{\beta_0} >0$  and
$$ \nu(\tau) \geq \alpha_0'- \alpha_0' \ln(\frac{\alpha_0'}{\beta_0}).$$
Therefore $\mathcal{J}(\bu, \Veta, \tau)$ is bounded from below.

Next, we show that $\mathcal{J}$ is coercive.
First we observe that:
$$
\lim_{\tau \rightarrow 0^+}\nu(\tau)  = +\infty, \quad  \lim_{\tau \rightarrow +\infty }\nu(\tau)= + \infty.
$$
% Per convincermi del limite a infinito: https://diario.softml.it/studio-di-una-funzione-x-lnx/
Moreover, since
$$\|\Veta \|\to 0, \quad \text{iff} \quad \lambda_i  \to 0, \quad \forall i$$
and
$$\|\Veta \|\to \infty, \quad \text{iff} \quad \exists i, \; 1 \leq i \leq p  \; \text{s.t.} \; \lambda_i  \to \infty$$
we have
$$
 \lim_{\|\Veta \|\rightarrow 0^+}\zeta(\Veta) = + \infty \quad \text{and} \quad \lim_{\|\Veta\| \rightarrow +\infty} \zeta(\Veta) = + \infty.
$$
Let $c_0$ be $\nu(\hat \tau)+ \zeta(\hat \Veta) $, % i.e.
then
\HIcomment[%
$$
\mathcal{J}(\bu, \Veta, \tau) - c_0  \geq   \frac{\tau}{2} \| A \bu - \bb \|^2   . 
$$]
%la parte rossa sostituisce questa che va eliminata:]
%$$
%\begin{aligned}
%\mathcal{J}(\bu, \Veta, \tau) - c_0 & \geq  & \frac{\tau}{2} \| A \bu - \bb \|^2 + \frac{1}{2} \Veta^T \Vpsi(\bu)  \\
%& \geq & \frac{\tau}{2} \| A \bu - \bb \|^2 + \frac{\epsilon}{2} \sum_{i=1}^p \lambda_i . \\
%
%&& = & \bu^T \left (\frac{\tau}{2} A^T A + L^T \Lambda L \right ) \bu - \tau \bu^T A^T \bb + \|\bb\|_2^2 + \epsilon \sum_{i=1}^N \lambda_i \\
%&&& \geq \frac{\tau}{2} \bu^T \left ( A^T A + \tau^{-1} L^T \Lambda L \right ) \bu - \tau \bu^T A^T \bb
%\end{aligned}
%$$
Since the quadratic form $\| A \bu - \bb \|^2$ is positive definite, it is coercive and consequently, $\mathcal{J}$ is coercive, i.e:
%
%where $\Lambda$ is a diagonal matrix with non zero elements $\lambda_i >0$, $i=1, \ldots, N$.
%%Therefore, $\left (\frac{\tau}{2} A^T A + \tau^{-1} L^T \Lambda L \right )$ is positive definite.
%%, and its minimum eigenvalue is denoted as $\rho_0 >c >0$ where $c$ is independent on $\tau$.
%Therefore, the  quadratic form
%$$Q(\bu)= \frac{\tau}{2} \bu^T \left ( A^T A + \tau^{-1} L^T \Lambda L \right ) \bu - \tau \bu^T A^T \bb $$
%is positive definite, hence  it is coercive, i.e. $\lim_{\|\bu\| \rightarrow \infty} Q(\bu) = +\infty$.
%Consequently, since $\mathcal{J}(\bu, \Veta, \tau) >Q(\bu)$, $\mathcal{J}$ is coercive:
%
$$\lim_{\|\bu\| \rightarrow \infty} \mathcal{J}(\bu, \Veta, \tau)  = +\infty$$
and it has a finite minimizer on $\mathbb{R}^N \times  \mathcal{V} \times (0, \infty)$.
\end{proof}
%==============  End TEO 1
\begin{corollary}\label{cor:cor1}
Let $ \Omega \subset \R^N$ be the compact subset of admissible solutions $\bu$, then there exists at least one minimizer of the functional $\mathcal{J}$ in \eqref{eq:J}, where $\bu \in \Omega$, $\tau >0$ and $\lambda_i>0$, $\forall i$.
\begin{proof}
Let  $ \bi_{\Omega}(\bu)$ be the   indicator function of 
$\Omega$, i.e.
$$ \bi_{\Omega}(\bu) = \left \{
\begin{array}{ll} 
0 & \bu \in \Omega; \\
\infty & \bu \notin \Omega;
\end{array} \right .
$$
through the functional $\hat{\mathcal{J}}$:
$\hat{\mathcal{J}} \equiv \mathcal{J}+ \bi_{\Omega}(\bu)$
we can view the  minimization of $\mathcal{J}$ where $\bu \in \Omega$ as the minimization of $\hat{\mathcal{J}}$ where $\bu \in \R^N$, i.e.:
$$ \min_{\bu \in \Omega} \mathcal{J} = \min_{\bu \in \R^N} \hat{\mathcal{J}} .$$
The functional $\hat{\mathcal{J}}(\bu, \Veta, \tau) $ is continuous on $\mathbb{R}^{N} \times  \mathcal{V} \times (0, \infty)$
where
$$\mathcal{V}= \left \{ \mathbf{z} \in \mathcal{R}^{p}: z_i > 0, \ i=1, \ldots, p\right \}.$$
Observing that 
$\hat{\mathcal{J}}  > \mathcal{J}$
we can proceed  as in Theorem \ref{teo:J} 
and prove that $\hat{\mathcal{J}}$ is continuous, bounded from below and coercive. Hence there is at least one minimizer in $\Omega \times  \mathcal{V} \times (0, \infty)$.
\end{proof}
\end{corollary}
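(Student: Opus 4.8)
The plan is to reduce the constrained minimization to the unconstrained setting already handled in Theorem~\ref{teo:J}, exploiting that the only new ingredient — the restriction $\bu\in\Omega$ — is benign because $\Omega$ is compact. Concretely, I would introduce the indicator function $\bi_{\Omega}$ of $\Omega$ and set $\hat{\mathcal{J}} \equiv \mathcal{J} + \bi_{\Omega}(\bu)$, so that $\min_{\bu\in\Omega}\mathcal{J} = \min_{\bu\in\R^N}\hat{\mathcal{J}}$; it then suffices to produce a minimizer of $\hat{\mathcal{J}}$ on $\R^N\times\mathcal{V}\times(0,\infty)$.

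The key steps, in order, would be: (i) note that $\hat{\mathcal{J}} \geq \mathcal{J}$ everywhere, so the lower bound established in Theorem~\ref{teo:J} — namely $\mathcal{J}(\bu,\Veta,\tau)\geq \nu(\tau)+\zeta(\Veta)\geq c_0$, a bound which does not involve $\bu$ at all — carries over verbatim, giving that $\hat{\mathcal{J}}$ is bounded from below; (ii) along any minimizing sequence $(\bu_n,\Veta_n,\tau_n)$ the values $\hat{\mathcal{J}}(\bu_n,\Veta_n,\tau_n)$ are finite and bounded, hence $\bu_n\in\Omega$ for all large $n$ and, from the same lower bound, $\nu(\tau_n)$ and $\zeta(\Veta_n)$ are bounded; since $\nu$ blows up as $\tau\to 0^+$ and as $\tau\to\infty$, and $\zeta$ blows up as $\|\Veta\|\to 0$ and as $\|\Veta\|\to\infty$ (both established inside the proof of Theorem~\ref{teo:J}), the tails $\{\tau_n\}$ and $\{\Veta_n\}$ remain in a compact subset of $(0,\infty)$, respectively of $\mathcal{V}$, bounded away from their boundaries; (iii) by compactness of $\Omega$ together with these bounds, $(\bu_n,\Veta_n,\tau_n)$ has a subsequence converging to some $(\bu^\ast,\Veta^\ast,\tau^\ast)\in\Omega\times\mathcal{V}\times(0,\infty)$; (iv) finally pass to the limit: $\mathcal{J}$ is continuous on $\R^N\times\mathcal{V}\times(0,\infty)$ and $\bi_{\Omega}$ is lower semicontinuous (as $\Omega$ is closed), so $\hat{\mathcal{J}}$ is lower semicontinuous, whence $\hat{\mathcal{J}}(\bu^\ast,\Veta^\ast,\tau^\ast)\leq \liminf_n \hat{\mathcal{J}}(\bu_n,\Veta_n,\tau_n) = \inf\hat{\mathcal{J}}$, so the infimum is attained.

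An alternative route avoids the indicator function entirely: one works directly with $\mathcal{J}$ restricted to $\Omega\times\mathcal{V}\times(0,\infty)$, takes a minimizing sequence, extracts convergent subsequences using compactness of $\Omega$ for the $\bu$-component and the coercivity of $\nu(\tau)+\zeta(\Veta)$ for the remaining two components, and concludes by continuity of $\mathcal{J}$. I expect the only point requiring genuine care — and the one I would flag as the ``main obstacle,'' though it is a mild one — to be that once the indicator term is added the functional is no longer continuous but merely lower semicontinuous, so one must invoke the generalized Weierstrass theorem rather than its continuous version (the author's statement that $\hat{\mathcal{J}}$ is ``continuous'' should be read in this relaxed sense). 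All the analytic substance — the explicit lower bound and the boundary behaviour of $\nu$ and $\zeta$ — is inherited unchanged from Theorem~\ref{teo:J}, and crucially no coercivity in $\bu$ is needed here, since $\Omega$ supplies the required compactness in that variable directly.
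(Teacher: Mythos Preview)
Your proposal is correct and follows essentially the same route as the paper: add the indicator $\bi_{\Omega}$, observe $\hat{\mathcal{J}}\geq\mathcal{J}$, and inherit the lower bound and boundary blow-up of $\nu(\tau)+\zeta(\Veta)$ from Theorem~\ref{teo:J} to conclude existence. If anything you are more careful than the paper, correctly noting that $\hat{\mathcal{J}}$ is only lower semicontinuous (not continuous) and that the Weierstrass argument must be invoked in its lsc form.
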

%
%===================================
In the hierarchical Bayesian approach, the hyperparameter $\tau$ serves as an estimator of the inverse of the noise variance $\sigma^2$  \cite{jin2008augmented}.
The first-order optimality conditions of the augmented Tikhonov functional $\mathcal{J}(\bu, \Veta, \tau)$ yield the following expression:
$$
\frac{\partial \mathcal{J}}{\partial \tau} (\bu, \Veta, \tau)   =   \frac{1}{2}
\| A \bu - \bb \|^2 + \beta_0 - \frac{\alpha'_0}{\tau} =0 .
$$
By substituting $\alpha_0' = \left(\frac{M}{2} + \alpha_0 - 1\right)$, we can derive the relationship for $\sigma^2$:
\begin{equation}\label{eq:sigma}
    \sigma^2= \frac{1}{2}\frac{\| A \bu - \bb \|^2 + 2 \beta_0}{\frac{M}{2} + \alpha_0-1}.
\end{equation}
Analogously to the formulation in \cite{ito2014multi} (Eq. 2.1), we express the critical points of $\mathcal{J}(\bu, \Veta, \tau)$ under the assumption of a noninformative prior, where \HIcomment[$\beta_0 \approx 0$ and $\beta_i=0$, which is commonly adopted in practice \cite{jin2008augmented,ito2014inverse}.] The critical points can be written as:
$$ \left \{
\begin{aligned}
\bu_{\Veta} = & \arg\min_{\bu} \left\{\frac{1}{2}\| A \bu - \bb \|^2 + \frac{1}{\hat \tau} \hat \Veta^T \Vpsi(\bu)   \right\}; \\
\hat \lambda_i = & \frac{\alpha_i}{\psi_i(\bu_{\Veta})}, \; i=1,\ldots, p; \\
\hat \tau = & \frac{\alpha_0}{\| A \bu_{\Veta} - \bb \|^2 + \HIcomment[\beta_0]} .
\end{aligned} \right . .
$$
Here, we assume $\alpha_i = \alpha$, and by setting $\gamma = \alpha_0/\alpha$ and $\Veta = \hat{\Veta} / \hat{\tau}$, \HIcomment[%
we obtain 
\begin{equation}\label{eq:BPp_aTikh}
\left \{
\begin{aligned}
\bu_{\Veta} = & \arg\min_{\bu} \left\{\frac{1}{2}\| A \bu - \bb \|^2 +  \Veta^T \Vpsi(\bu)   \right\} ; \\
\lambda_i = & \frac{\| A \bu_{\Veta} - \bb \|^2+\HIcomment[\beta_0]}{ \gamma \psi_i(\bu_{\Veta})}, \; \; i=1,\ldots, p . \\
\end{aligned} \right .
\end{equation}
Following \cite{Ito2011_tik} and \cite{ito2014multi}, the multi-parameter Balancing Principle is derived by setting $\beta_0=0$ in \eqref{eq:BPp_aTikh}:
]
%derive the multi-parameter Balancing Principle introduced %in \cite{Ito2011_tik} and \cite{ito2014multi}:
\begin{equation}\label{eq:BPp}
\left \{
\begin{aligned}
\bu_{\Veta} = & \arg\min_{\bu} \left\{\frac{1}{2}\| A \bu - \bb \|^2 +  \Veta^T \Vpsi(\bu)   \right\} ; \\
\lambda_i = & \frac{\| A \bu_{\Veta} - \bb \|^2}{ \gamma \psi_i(\bu_{\Veta})}, \; \; i=1,\ldots, p . \\
\end{aligned} \right .
\end{equation}
In this formulation, the optimal regularization parameter $\Veta$ is obtained by balancing the least-squares fidelity term $\tfrac{1}{2}\| A \bu - \bb \|^2$ with the penalty functions $\psi_i$.
\begin{remark} \label{rmk:2}
It is worth noting that when the  parameter $\alpha_0$ is set to $1$ and \HIcomment[$\beta_0\approx0$], the expression \eqref{eq:sigma} simplifies further:
$$\sigma^2\HIcomment[\approx]\frac{\| A \bu - \bb \|^2 }{M}.$$
\HIcomment[This result indicates that the mean residual norm, obtained with the Balancing Principle, can be a reliable estimate of the noise variance.
The expression relates to the Morozov discrepancy principle \cite{Mor66}, where the residual norm is used to tune the regularization parameter to the noise level, thus aligning the model fidelity with the observed data's noise characteristics.]
\end{remark}
Let $\Omega$ be a compact subset of $\mathbb{R}^N$; following \cite{ito2011multi}, we
%, and let the data-fit function be $\phi(\bu)\equiv \| A \bu_{\Veta} - b\|^2$. We
define the value function as
\begin{equation}
F(\Veta) = \min_{\bu \in \Omega} \; \{ \phi(\bu) + {\Veta}^T \Vpsi(\bu) \}.
\label{eq:val_f}
\end{equation}
%This function is well-defined since $\phi(\bu) + {\Veta}^T \Vpsi(\bu)$ is convex for $\Veta \in \mathcal{V}$.
%\textcolor[rgb]{1.00,0.00,0.00}{\textbf{ATTENZIONE:} non ho capito questa frase..in \cite{ito2011multi} si dimostra che F \`{e} concava...togliere la frase?\\}
% It is worth noting
In \cite{ito2011multi}, it is proved that $F(\Veta)$ is a concave and differentiable function whose derivatives, with respect to $\lambda_j$, are given by:
\begin{equation}\label{eq:dFdlam}
  \frac{\partial F}{\partial \lambda_j} = \psi_j (\bu_{\Veta}), \quad j=1,\ldots,p
\end{equation}
where
\begin{equation*}
  \bu_{\bl} = \arg\min_{\bu \in \Omega} \; \{ \phi(\bu) + {\Veta}^T \Vpsi(\bu) \}.
\end{equation*}
In the following theorem, 
we present the variational characterization of the Balancing Principle \eqref{eq:BPp}, 
by extending 
the result shown in \cite{ito2014multi}  for the case of $p=2$ to the case $p\gg 2$.
\def\us{\mathbf{u}_{{\boldsymbol{\lambda}}^*}}
%------------------------------------
\begin{theorem}\label{teo1}
    Let $\gamma$ be a positive constant, and let us define
    \begin{equation}\label{eq:phig}
    \Phi_\gamma(\Veta) \equiv \frac{F^{\gamma + p}(\Veta)}{\prod_{i=1}^p\lambda_i}
    \end{equation}
with $F(\Veta)$ given  in equation \eqref{eq:val_f}. Then, the function $\Phi_\gamma(\bl):\mathbb{R}^p \rightarrow \mathbb{R}$ is continuous, differentiable and
the vector $\Veta^* \in \mathbb{R}^p$ with components
\begin{equation}\label{eq:teo1}
\lambda_j^* = \frac{\phi(\us)}{\gamma \psi_j(\us)}, \quad j=1, \ldots, p
\end{equation}
where
\begin{equation*}
  \us = \arg\min_{\bu \in \Omega} \; \{ \phi(\bu) + {\Veta^*}^T \Vpsi(\bu) \}
\end{equation*}
is a critical point of  $\Phi_\gamma(\Veta)$.
%
%\textcolor[rgb]{1.00,0.00,0.00}{\textbf{ATTENZIONE:} ho spostato qui la parte in rosso, era nel teorema in cui si dim che e' coerciva ma secondo me va qui}
%
%.solution of  the following minimization problem:
%$$ \min_{\Veta}\Phi_\gamma(\Veta) , \quad \forall  \gamma >0$$
%with $\bu_{\Veta^*}=\arg\min_{\bu \in \Omega} J_{\Veta^*}(\bu)$.
\end{theorem}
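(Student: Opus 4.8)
The plan is to handle $\Phi_\gamma$ by logarithmic differentiation: I would first record that $\Phi_\gamma$ is well defined and $C^1$ on the open orthant $\mathcal{V}=\{\Veta\in\mathbb{R}^p:\lambda_i>0\}$, then reduce the critical-point condition to a system of $p$ scalar equations in $\Veta$, and finally check by direct substitution that the balancing vector $\Veta^*$ of \eqref{eq:teo1} solves that system. For well-definedness, note that since $\psi_i(\bu)\geq\epsilon>0$ we have $F(\Veta)=\min_{\bu\in\Omega}\{\phi(\bu)+\Veta^T\Vpsi(\bu)\}\geq\epsilon\sum_{i=1}^p\lambda_i>0$ for every $\Veta\in\mathcal{V}$, so both $F(\Veta)$ and $\prod_i\lambda_i$ are strictly positive and $\Phi_\gamma(\Veta)>0$ throughout $\mathcal{V}$. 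Continuity and differentiability of $\Phi_\gamma$ then follow from those of $F$ (quoted from \cite{ito2011multi}, with $\bu_{\Veta}$ the unique minimizer, uniqueness coming from strict convexity of $\phi$ since $A$ is full rank with $M\geq N$) together with the smoothness of $t\mapsto t^{\gamma+p}$ on $(0,\infty)$ and of $\Veta\mapsto\prod_i\lambda_i$. One should also observe that $\Veta^*$ genuinely lies in $\mathcal{V}$, which holds provided $\phi(\us)>0$, i.e. the data admit no exact fit in $\Omega$.

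Next I would compute the stationarity equations. Since $\Phi_\gamma>0$ on $\mathcal{V}$, its critical points coincide with those of $\ln\Phi_\gamma(\Veta)=(\gamma+p)\ln F(\Veta)-\sum_{i=1}^p\ln\lambda_i$, and using \eqref{eq:dFdlam}, i.e. $\partial F/\partial\lambda_j=\psi_j(\bu_{\Veta})$, one gets
$$\frac{\partial}{\partial\lambda_j}\ln\Phi_\gamma(\Veta)=(\gamma+p)\,\frac{\psi_j(\bu_{\Veta})}{F(\Veta)}-\frac{1}{\lambda_j},\qquad j=1,\dots,p.$$
Hence $\Veta$ is a critical point of $\Phi_\gamma$ if and only if $(\gamma+p)\,\lambda_j\,\psi_j(\bu_{\Veta})=F(\Veta)$ for every $j=1,\dots,p$.

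Finally I would verify this condition at $\Veta^*$. At $\Veta=\Veta^*$ the inner minimizer is $\us$, and \eqref{eq:teo1} gives $\lambda_j^*\psi_j(\us)=\phi(\us)/\gamma$ for every $j$; summing over $j$ then yields $F(\Veta^*)=\phi(\us)+\sum_{i=1}^p\lambda_i^*\psi_i(\us)=\phi(\us)+\tfrac{p}{\gamma}\phi(\us)=\tfrac{\gamma+p}{\gamma}\phi(\us)$. Therefore $(\gamma+p)\lambda_j^*\psi_j(\us)=\tfrac{\gamma+p}{\gamma}\phi(\us)=F(\Veta^*)$ for every $j$, which is exactly the stationarity condition derived above, so $\Veta^*$ is a critical point of $\Phi_\gamma$.

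I expect the only delicate ingredient to be the differentiability of the value function and formula \eqref{eq:dFdlam}, i.e. that $\bu_{\Veta}$ is a well-defined and sufficiently regular function of $\Veta$ for the envelope/Danskin argument of \cite{ito2011multi} to apply; here this is ensured by the strict convexity of $\phi$. Everything else is bookkeeping, and the point of the computation is precisely that the exponent $\gamma+p$ chosen in \eqref{eq:phig} is what makes the factor $\gamma+p$ cancel against the identity $F(\Veta^*)=\tfrac{\gamma+p}{\gamma}\phi(\us)$, turning the abstract balancing relation \eqref{eq:BPp} into a stationarity condition for $\Phi_\gamma$.
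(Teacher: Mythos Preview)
Your argument is correct. Both you and the paper reduce the stationarity condition to
\[(\gamma+p)\,\lambda_j\,\psi_j(\bu_{\Veta})=F(\Veta),\qquad j=1,\dots,p,\]
using \eqref{eq:dFdlam}; you reach it by logarithmic differentiation of $\Phi_\gamma$, the paper by differentiating $\Phi_\gamma$ directly, but the resulting equations are identical. The genuine difference is in what happens next. The paper substitutes $F(\Veta)=\phi(\bu_{\Veta})+\sum_i\lambda_i\psi_i(\bu_{\Veta})$, rewrites the system as $(\mathbf{D}-\mathbf{1}\Vpsi(\bu_{\Veta})^T)\Veta=\phi(\bu_{\Veta})\mathbf{1}$ with $\mathbf{D}$ diagonal, and then inverts this rank-one perturbation via the Sherman--Morrison formula to \emph{derive} the closed form $\lambda_j^*=\phi(\us)/(\gamma\psi_j(\us))$. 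You instead take the formula \eqref{eq:teo1} as given and \emph{verify} it by the two-line computation $\lambda_j^*\psi_j(\us)=\phi(\us)/\gamma$, hence $F(\Veta^*)=\tfrac{\gamma+p}{\gamma}\phi(\us)$, which matches $(\gamma+p)\lambda_j^*\psi_j(\us)$. Since the theorem is stated as ``this explicit $\Veta^*$ is a critical point'', your verification route is entirely adequate and considerably lighter; the paper's Sherman--Morrison computation buys a constructive derivation and, implicitly, that $\Veta^*$ is the \emph{only} solution of the fixed-$\bu_{\Veta}$ linear system (nonsingularity of $\mathbf{D}-\mathbf{1}\Vpsi^T$), which your approach does not address but the theorem does not require. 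Your side remarks that $\Phi_\gamma>0$ on the open orthant and that $\Veta^*\in\mathcal{V}$ needs $\phi(\us)>0$ are useful clarifications absent from the paper's proof.
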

\begin{proof}
Continuity and differentiability of $\Phi_\gamma(\bl)$ follow from continuity and differentiability of the value function $F(\bl)$ which have been proved in \cite{ito2011multi} (cfr. Lemma 2.1 and Theorem 2.2).
Since $F$ is differentiable, the critical point $\Veta^*$ is defined by
\begin{equation}\label{eq:crp}
 \frac{\partial \Phi_\gamma(\Veta^*)}{\partial \lambda_j}=0, \qquad j=1, \ldots, p.
 \end{equation}
Computing the derivatives:
$$
\begin{aligned}
\frac{\partial \Phi_\gamma(\Veta)}{\partial \lambda_j}  = & \frac{(\gamma + p) F^{\gamma + p - 1}(\Veta) \cdot \frac{\partial F(\Veta)}{\partial \lambda_j} - F^{\gamma + p}(\Veta) \cdot \frac{1}{\lambda_j}}{\left(\prod_{i=1}^p \lambda_i\right)} \\[4pt]
= & \frac{F^{\gamma + p - 1}(\Veta) }{\left(\prod_{i=1}^p \lambda_i\right)} \left [  (\gamma + p) \frac{\partial F(\Veta)}{\partial \lambda_j} - \frac{F(\Veta) }{\lambda_j} \right ]
\end{aligned}
$$
and substituting the expression \eqref{eq:dFdlam},
%equation \eqref{eq:dFdlam} with
%$$\bu_{\Veta} = \arg\min_{u \in \Omega} J_{\Veta}(\bu)$$
%$$\frac{\partial \Phi_\gamma(\Veta)}{\partial \lambda_j} = (\gamma + N) F^{\gamma + N - 1}(\Veta) \cdot \frac{\partial F(\Veta)}{\partial \lambda_j} - F^{\gamma + N}(\Veta) \cdot \frac{1}{\lambda_j}, $$
%we obtain the following linear equations:
%
we obtain that the derivatives are null when the parameters $\bl_j$ satisfy the following linear equations:
\begin{equation}\label{eq:lineqs}
 (\gamma + p) \lambda_j \cdot \psi_j (\bu_{\Veta}) - F(\Veta)=0, \qquad j=1, \ldots, p.
 \end{equation}
%
%Computing  the value function \eqref{eq:val_f} in $\Veta^*$, we have:
%$$ F(\Veta^*) = \phi(\us) + (\Veta^*)^T \Vpsi(\us) = \phi(\us) + \sum_{i} \lambda^*_i \psi_i (\us).$$
Substituting the expression for $F(\Veta)$ into \eqref{eq:lineqs}, we obtain the following linear system:
%
%$$
%\begin{aligned}
%& 0 &= &(\gamma + N) \eta_j \cdot \psi_j (\bu^*) - \phi(\bu^*) - \Veta^T \Vpsi(\bu^*)= \\
%& &&= - \phi(\bu^*) + (\gamma + N) \eta_j \cdot \psi_j (\bu^*) -\sum_{i} \eta_i \psi_i (\bu^*)
%\end{aligned}
%$$
%i.e.:
\begin{equation}\label{eq:16}
(\gamma + p) \lambda_j \cdot \psi_j (\bu_{\Veta}) -\sum_{i=1}^p \lambda_i \psi_i (\bu_{\Veta}) = \phi(\bu_{\Veta}), \quad j=1, \ldots, p
\end{equation}
which can be written in matrix form as
\begin{equation}\label{eq:sys}
 \left (\mathbf{D} - \mathbf{1}\Vpsi(\bu_{\Veta})^T \right ) \Veta =\phi(\bu_{\Veta}) \mathbf{1}
 \end{equation}
where $\mathbf{D}$ is the diagonal matrix of order $p$ with non-zero elements $D_{i,i}=(\gamma+p) \psi_i (\bu_{\Veta})$,
%$\Vpsi(\bu_{\Veta}) = (\psi_1(\bu_{\Veta}), \ldots, \psi_N(\bu_{\Veta}))^T $
and $\boldsymbol{1} \in \mathbb{R}^{p}$ is a column vector with elements equal to one.
We observe that the linear system has a matrix which is the sum of the  diagonal $\mathbf{D}$ and the rank-one update $-\mathbf{1}\Vpsi(\bu_{\bl})^T$. %
Since $\mathbf{D}$ is non singular because $\psi_i (\bu_{\Veta}) > 0$ for all $i$ and
%$ 1 + \mathbf{1}^T D^{-1}\Vpsi(\us) \neq 0, $$
\begin{equation*}
  1 + \mathbf{1}^T D^{-1}\Vpsi(\bu_{\Veta}) = 1 - \frac{1}{\gamma+p}\sum_{i=1}^p 1 = 1 - \frac{p}{p+\gamma}= \frac{\gamma}{p+\gamma} \neq 0 ,
\end{equation*}
we can apply the Shermann-Morrison formula \cite{sherman1950adjustment}  and obtain (see details in Appendix \ref{App_A}):
$$
\begin{aligned}
& \left (\mathbf{D} - \mathbf{1}\Vpsi(\bu_{\Veta})^T \right )^{-1} &= & 
\frac{1}{p+\gamma} \left [ \begin{pmatrix}
 \tfrac{1}{\psi_1(\bu_{\Veta})}  & \\
 0 & \tfrac{1}{\psi_2(\bu_{\Veta})}  & \\
 0 & 0 & \ddots & \\
  0 & \cdots & 0 & \tfrac{1}{\psi_p(\bu_{\Veta})}
  \end{pmatrix}  \right . \\
  &&& + \left . \frac{1}{\gamma}  \begin{pmatrix}
 \tfrac{1}{\psi_1(\bu_{\Veta})} & \tfrac{1}{\psi_1(\bu_{\Veta})}& \cdots & \tfrac{1}{\psi_1(\bu_{\Veta})} \\
\tfrac{1}{\psi_2(\bu_{\Veta})} & \tfrac{1}{\psi_2(\bu_{\Veta})}& \cdots & \tfrac{1}{\psi_2(\bu_{\Veta})} \\
\vdots & \vdots & \ddots & \vdots \\
\tfrac{1}{\psi_p(\bu_{\bl})} & \tfrac{1}{\psi_p(\bu_{\Veta})} & \cdots &  \tfrac{1}{\psi_p(\bu_{\Veta})}
\end{pmatrix}  \right ].
\end{aligned}
$$
Hence the solution of the linear system \eqref{eq:sys}  is:
\begin{equation*}
  \Veta^* = \left (\mathbf{D} - \mathbf{1}\Vpsi(\us)^T \right )^{-1} \phi(\us) \mathbf{1}
\end{equation*}
whose components are
\begin{equation*}
  \lambda^*_j = \phi(\us) \frac{1}{p+\gamma}\left( \frac{1}{\psi_j(\us)} + \frac{p}{\gamma \psi_j(\us)} \right) = \frac{\phi(\us)}{\gamma \psi_j(\us)},\quad j=1, \ldots, p 
\end{equation*}
which concludes the proof.
\end{proof}
%
%\textcolor{red}{Da fare}
%I punti critici di $\Phi$ sono sono solutioni della (7-8)
%
This theorem shows that we can characterize the solution to the BP system as critical points of the function $\Phi_\gamma$ \eqref{eq:phig}.
The solution depends on the choice of the parameter $\gamma$; in \cite{ito2011multi} an iterative procedure has been proposed for its selection in the case $p=2$ but a general criterion for $p\gg 2$ is still missing.

The Uniform Penalty Principle introduced in \cite{borgia1998uniform, bortolotti2016uniform} suggests choosing the regularization parameters so that they satisfy the following two conditions:
\begin{description}
    \item[C1] All the penalty terms $\lambda_i \psi_i(\bu)$, $i=1, \ldots, p$, are uniform and equal to a constant value $c$, i.e.
    $$\lambda_i\psi_i(\bu) = c,\quad i=1, \ldots, p$$
    which gives  $\Veta^T \Vpsi(\bu) =  c p$.
    \item[C2] The multi-penalty term $\Veta^T \Vpsi(\bu)$ is equal to the data fidelity term:
    $$\phi(\bu)=\Veta^T \Vpsi(\bu).$$
\end{description}
Conditions \textbf{C1} and \textbf{C2} give 
$$\lambda_i = \frac{ \phi(\bu)}{p\psi_i(\bu)}, \quad i=1, \ldots, p $$
which corresponds to $\gamma=p$ in the Balancing Principle.

Considering the observed properties of the noise estimation (cfr. Remark \ref{rmk:2}), when $p=M$, the penalty terms are uniform and equal to the $M$th fraction of the noise variance. For this reason, we will refer to the nonlinear system
\begin{equation}\label{eq:UPS}
\left \{
\begin{aligned}
\bu_{\Veta} = & \arg\min_{\bu\in\Omega} \left\{\frac{1}{2}\| A \bu - \bb \|^2 +  \Veta^T \Vpsi(\bu)   \right\}; \\
\lambda_i = & \frac{\| A \bu - \bb \|^2}{ p \psi_i(\bu_{\Veta})}, \; i=1,\ldots,p;
\end{aligned} \right .
\end{equation}
as ``Uniform Penalty System'' (UPS). 
It is evident that the UPS is a particular case of the BP obtained for $\gamma=p$.
%---------------------------------------------------------------------------------------------
\section{The numerical method}\label{sec:method}
In order to develop a suitable iterative method for computing the solution $(\bu^*,\bl^*)$ to the UPS \eqref{eq:UPS}, we use a majorization-mi\-ni\-mi\-za\-tion  approach \cite{hunter2004tutorial} to the optimization problem
\begin{equation}\label{eq:MMpb}
  \min\limits_{\bl} \; \Phi_p(\bl) .
\end{equation}
The following theorem shows that problem \eqref{eq:MMpb} is well defined.
\begin{theorem}\label{th:coercivity}
    The function $\Phi_p(\bl):\mathbb{R}^p \rightarrow \mathbb{R}$ is coercive.
\end{theorem}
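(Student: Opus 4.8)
The plan is to show that $\Phi_p(\bl) = F^{\gamma+p}(\bl) / \prod_{i=1}^p \lambda_i$ (with $\gamma = p$) tends to $+\infty$ as $\|\bl\| \to \infty$ and as $\|\bl\| \to 0^+$, the two ways one can approach the boundary of the open orthant $\mathcal{V} = \{\bl : \lambda_i > 0\}$. The key structural facts I would use are all consequences of results already established: $F(\bl) = \min_{\bu \in \Omega}\{\phi(\bu) + \bl^T\Vpsi(\bu)\}$ is concave (hence continuous) on $\mathcal{V}$, it is positive since $\phi \geq 0$ and $\Vpsi \geq \epsilon \mathbf{1} > 0$ with $\lambda_i > 0$, and crucially $\psi_i(\bu) \geq \epsilon$ for all $\bu$, which gives the linear lower bound $F(\bl) \geq \epsilon \sum_{i=1}^p \lambda_i$ after dropping the nonnegative term $\phi$ and the minimization over the compact $\Omega$ (more precisely, $F(\bl) \geq \min_{\bu\in\Omega}\phi(\bu) + \epsilon\sum_i \lambda_i \geq \epsilon\sum_i\lambda_i$). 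I would also record the trivial upper bound $F(\bl) \leq \phi(\bu_0) + \bl^T\Vpsi(\bu_0)$ for any fixed $\bu_0 \in \Omega$, so $F$ grows at most linearly in $\bl$.

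First I would handle the regime $\|\bl\| \to \infty$. Here some component, say $\lambda_k \to \infty$. Using $F(\bl) \geq \epsilon \sum_i \lambda_i \geq \epsilon \lambda_k$ and the fact that each $\lambda_i > 0$, I would bound
$$
\Phi_p(\bl) = \frac{F^{2p}(\bl)}{\prod_{i=1}^p \lambda_i} \geq \frac{\epsilon^{2p}\left(\sum_i \lambda_i\right)^{2p}}{\prod_{i=1}^p \lambda_i}.
$$
By the AM–GM inequality, $\left(\sum_i\lambda_i\right)^{p} \geq p^p \prod_i \lambda_i$, so $\left(\sum_i\lambda_i\right)^{2p} \geq \left(\sum_i\lambda_i\right)^p \cdot p^p\prod_i\lambda_i$, whence $\Phi_p(\bl) \geq \epsilon^{2p} p^p \left(\sum_i\lambda_i\right)^p \to \infty$ since $\sum_i \lambda_i \geq \lambda_k \to \infty$. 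This takes care of coercivity at infinity.

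The more delicate regime is $\|\bl\| \to 0^+$, i.e. all $\lambda_i \to 0$; this is "coercivity at the boundary" in the sense of Theorem~\ref{teo:J}'s proof. Here the numerator $F^{2p}(\bl)$ stays bounded below by a positive constant: $F(\bl) \geq \min_{\bu\in\Omega}\phi(\bu) \geq 0$ is not quite enough, so instead I would use that $F$ is continuous on all of $\mathcal{V}$ and extends continuously to $\bl = 0$ with value $F(\mathbf 0) = \min_{\bu\in\Omega}\phi(\bu) =: \phi_{\min}$; if $\phi_{\min} > 0$ (the generic ill-posed case, since $\bb \notin A\Omega$) we are done because the denominator $\prod_i\lambda_i \to 0^+$ while the numerator $\to \phi_{\min}^{2p} > 0$. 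The one subtlety — and I expect this to be the main obstacle — is the degenerate case $\phi_{\min} = 0$ (attainable exact data); then both numerator and denominator vanish and one needs a genuine rate comparison. In that case I would fall back on the sharper lower bound $F(\bl) \geq \epsilon\sum_i\lambda_i$, giving $\Phi_p(\bl) \geq \epsilon^{2p}(\sum_i\lambda_i)^{2p}/\prod_i\lambda_i \geq \epsilon^{2p}p^p(\sum_i\lambda_i)^p$ exactly as before — but now $\sum_i\lambda_i \to 0$, so this bound is useless and in fact $\Phi_p$ need not blow up. I would therefore either invoke the standing assumption that the data are genuinely noisy so $\phi_{\min}>0$ (consistent with the paper's setup, where $\bb = A\bu + \mathbf e$ with nontrivial Gaussian $\mathbf e$ and $\Omega$ compact typically excluding the unconstrained least-squares solution), or add this as an explicit hypothesis. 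Modulo pinning down that hypothesis, combining the two regimes with continuity of $\Phi_p$ on the open orthant yields that every sublevel set $\{\bl \in \mathcal V : \Phi_p(\bl) \leq t\}$ is bounded and bounded away from the boundary, hence compact, which is the assertion of coercivity.
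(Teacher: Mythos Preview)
Your argument for the regime $\|\bl\|\to\infty$ is correct and essentially the paper's own proof: the same key lower bound $F(\bl)\geq\epsilon\sum_i\lambda_i$ is used, and then the paper bounds $(\sum_i\lambda_i)^{2p}/\prod_i\lambda_i$ from below via $\sum_i\lambda_i\geq\max_i\lambda_i$ and $\prod_i\lambda_i\leq(\max_i\lambda_i)^p$, obtaining $\Phi_p(\bl)\geq\epsilon^{2p}(\max_i\lambda_i)^p$, whereas you use AM--GM to reach $\Phi_p(\bl)\geq\epsilon^{2p}p^p(\sum_i\lambda_i)^p$. These are interchangeable elementary estimates.

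Where you depart from the paper is in scope. The paper takes ``coercive'' in the classical sense $\Phi_p(\bl)\to\infty$ as $\|\bl\|\to\infty$ and stops there; your entire discussion of the boundary regime $\lambda_i\to 0^+$, the hypothesis $\phi_{\min}>0$, and compactness of sublevel sets on the open orthant is absent from the paper's proof. Your instinct is reasonable given that the theorem is meant to justify well-posedness of $\min_{\bl}\Phi_p(\bl)$, but note that even your extended analysis is incomplete: you treat only ``all $\lambda_i\to 0$'' rather than the general approach to $\partial\mathcal V$ where only some components vanish (this case is actually easier, since $F(\bl)\geq\epsilon\sum_i\lambda_i$ then stays bounded away from zero regardless of $\phi_{\min}$, while $\prod_i\lambda_i\to 0$). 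For the theorem as the paper states and proves it, only your first paragraph is needed.
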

\begin{proof}
%We have to prove that $\Phi_N(\bl)$ is coercive. 
Let $\{\blk\}_{k\in\mathbb{N}}$ be a sequence such that $\|\blk\| \rightarrow \infty$.
Since $\psi_j(\bu)\geq \epsilon$ for all $j$ and $\phi(\bu)\geq 0$, we get
\begin{align*}
  \Phi_p(\blk) & =  \displaystyle{ \frac{\left[ \min\limits_{\bu\in\Omega} \; \phi(\bu)+\sum_{j=1}^p \lambda_j^{(k)}\psi_j(\bu)\right]^{2p}}{\prod_{j=1}^p \lambda_j^{(k)}} }\\
   & \geq \displaystyle{ \frac{\left[  \epsilon\sum_{j=1}^p\lambda_j^{(k)}\right]^{2p}}{\prod_{j=1}^p \lambda_j^{(k)}} }
        \geq \displaystyle{ \frac{\left[ \epsilon \max_j \lambda_j^{(k)}\right]^{2p}}{\left[ \max_j  \lambda_j^{(k)} \right]^p} }
\end{align*}
    where we have used $\sum_{j=1}^p\lambda_j^{(k)}\geq \max_j  \lambda_j^{(k)}$. Summarizing, we have
    \begin{equation*}
        \Phi_p(\blk) \geq \epsilon^{2p}\left[ \max_j  \lambda_j^{(k)}  \right]^p
    \end{equation*}
    which gives the thesis.
\end{proof}
In the following, we first revise the MM framework and then we derive our iterative scheme to address \eqref{eq:MMpb}.
\subsection{Majorization-minimization framework}
Let $f(\bl)$ be a function to be minimized over $\mathbb{R}^p$. Given an initial guess $\bl^{(0)}$, at each iteration $k\in \mathbb{N}$ of a MM method, the following two steps are performed. \\
  \textbf{i) Majorization Step.} A surrogate function $Q(\bl,\bl^{(k)})$ is constructed which satisfies the majorization conditions
  \begin{equation}\label{eq:S}
  \begin{array}{l}
    Q(\bl^{(k)},\bl^{(k)}) = f(\bl^{(k)}) ,\\
    Q(\bl,\bl^{(k)}) \geq f(\bl), \quad \text{for each} \; \bl  .
    \end{array}
  \end{equation}
  \textbf{ii) Minimization step.} The successive iterate $\bl^{(k+1)}$ is obtained by minimizing the surrogate function $Q(\bl,\bl^{(k)})$ with respect to $\bl$, i.e:
  \begin{equation*}
    \bl^{(k+1)} =\arg\min\limits_{\bl} \; Q(\bl,\bl^{(k)}) .
  \end{equation*}
As a consequence of the majorization step, a MM method gives a sequence of iterates $\{\bl^{(k)}\}_{k\in\mathbb{N}}$ for which  the objective function decreases monotonically:
$$ f(\bl^{(k+1)})\leq Q(\bl^{(k+1)},\bl^{(k)}) \leq Q(\bl^{(k)},\bl^{(k)})=f(\blk).$$
Under mild conditions, convergence of a MM sequence $\{\blk\}_{k\in\mathbb{N}}$ to stationary points can be proved by using the convergence results given in \cite{Wu1983}
for EM algorithms. For the sake of completeness, we report here a classical convergence result \cite{Sun2017,zhang2007surrogate}.
\begin{theorem}\label{th:conv}
Consider a MM method with surrogate function $Q(\bl,\bl')$ applied to the optimization problem
  \begin{equation*}
    \min\limits_{\bl} \; f(\bl)
  \end{equation*}
  and suppose that
  \begin{description}
    \item[i)] $f: \R^p \rightarrow \R$ is continuous and differentiable;
    \item[ii)] the level set $\mathcal{L}=\{\bl \; | \; f(\bl)\leq f(\bl^{(0)})\}$ is compact;
    \item[iii)] $Q(\bl,\bl')$ is continuous in both $\bl$ and $\bl'$ and differentiable in $\bl$.
  \end{description}
Then, all the limit points of the generated sequence $\{\blk\}_{k\in\mathbb{N}}$ are stationary points of $f(\bl)$ and the sequence $\{f(\blk)\}_{k\in\mathbb{N}}$ converges monotonically to $f(\bl^*)$ for some stationary point $\bl^*$.
\end{theorem}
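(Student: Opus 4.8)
The plan is to follow the classical argument for MM / EM / block-successive-minimization schemes, as in \cite{Wu1983,Sun2017,zhang2007surrogate}: combine the descent property that the majorization step automatically enforces with the compactness hypothesis (ii), and extract from the majorization conditions \eqref{eq:S} the only extra ingredient needed, namely that the surrogate is first-order tangent to $f$ at the anchor point.

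First I would record the descent chain already displayed above: by \eqref{eq:S} and the fact that $\bl^{(k+1)}$ minimizes $Q(\cdot,\bl^{(k)})$, one has $f(\bl^{(k+1)}) \le Q(\bl^{(k+1)},\bl^{(k)}) \le Q(\bl^{(k)},\bl^{(k)}) = f(\bl^{(k)})$, so $\{f(\blk)\}_{k\in\mathbb{N}}$ is non-increasing. Consequently every iterate lies in the level set $\mathcal{L}$, which is compact by (ii); since $f$ is continuous by (i), it is bounded below on $\mathcal{L}$, and therefore $f(\blk)\searrow f^*$ for some $f^*\in\mathbb{R}$. Compactness of $\mathcal{L}$ also guarantees that $\{\blk\}$ has at least one limit point, and by continuity of $f$ along the corresponding subsequence every limit point $\bl^*$ lies in $\mathcal{L}$ and satisfies $f(\bl^*)=f^*$.

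Next I would observe that the majorization conditions force first-order consistency: for fixed $\bl'$ the map $r_{\bl'}(\bl):=Q(\bl,\bl')-f(\bl)$ is nonnegative on $\mathbb{R}^p$ by \eqref{eq:S}, differentiable in $\bl$ by (i) and (iii), and vanishes at $\bl=\bl'$; hence $\bl'$ is a global minimizer of $r_{\bl'}$ and $\nabla_{\bl}Q(\bl',\bl')=\nabla f(\bl')$. The main step is then to show that a limit point is stationary. Let $\bl^*$ be a limit point with $\bl^{(k_j)}\to\bl^*$. Since $\bl^{(k_j+1)}$ is a global minimizer of $Q(\cdot,\bl^{(k_j)})$, for every fixed $\bl\in\mathbb{R}^p$ we have $f(\bl^{(k_j+1)})\le Q(\bl^{(k_j+1)},\bl^{(k_j)})\le Q(\bl,\bl^{(k_j)})$; the left-hand side is a subsequence of the convergent sequence $\{f(\blk)\}$ and thus tends to $f^*$, while continuity of $Q$ (iii) lets us pass to the limit on the right, giving $f^*\le Q(\bl,\bl^*)$ for all $\bl$. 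Because $Q(\bl^*,\bl^*)=f(\bl^*)=f^*$, the point $\bl^*$ is itself a global minimizer of $\bl\mapsto Q(\bl,\bl^*)$ over $\mathbb{R}^p$, so differentiability in $\bl$ yields $\nabla_{\bl}Q(\bl^*,\bl^*)=0$, and combining this with the tangency identity $\nabla_{\bl}Q(\bl^*,\bl^*)=\nabla f(\bl^*)$ gives $\nabla f(\bl^*)=0$. Thus every limit point is a stationary point of $f$; since at least one limit point $\bl^*$ exists and the monotone sequence $f(\blk)$ converges to $f^*=f(\bl^*)$, the last assertion follows.

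I expect the delicate point to be precisely this last step. The natural-looking move would be to pass to the limit in the iterate $\bl^{(k_j+1)}$ and claim the limit minimizes $Q(\cdot,\bl^*)$, but that would require uniqueness or continuity of the argmin map, which is not assumed. The clean route is to keep the inequality $f(\bl^{(k_j+1)})\le Q(\bl,\bl^{(k_j)})$ for \emph{all} $\bl$ and let $j\to\infty$, so that the limit identifies $\bl^*$ itself — not some auxiliary accumulation point of the shifted subsequence — as a minimizer of $Q(\cdot,\bl^*)$. I would also take care that hypothesis (ii) is exactly what confines the iterates to a compact set and makes $\{f(\blk)\}$ bounded below, and that the minimization in the scheme is over all of $\mathbb{R}^p$, which is what legitimizes passing from ``global minimizer of $Q(\cdot,\bl^*)$'' to the vanishing of $\nabla_{\bl}Q(\bl^*,\bl^*)$.
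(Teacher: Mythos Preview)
Your argument is correct and is essentially the classical proof from \cite{Wu1983,Sun2017,zhang2007surrogate}. Note, however, that the paper does not actually supply a proof of this theorem: it is stated ``for the sake of completeness'' as a known result and simply cited, so there is no proof in the paper to compare against; your write-up fills in exactly what those references contain.
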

\subsection{The algorithm}
In the MM framework, we derive an iterative method for \eqref{eq:MMpb} by defining the following surrogate function for  $\Phi_p(\bl)$:
\begin{equation*}
  Q(\bl,\bl')= \frac{\left( \phi(\bu_{\bl'})+\bl^T\Vpsi(\bu_{\bl'})\right)^{2p}}{\prod_{i=1}^p\lambda_i}
\end{equation*}
where
\begin{equation}\label{eq:ul}
  \bu_{\bl'} = \arg\min\limits_{\bu\in\Omega} \; \{\phi(\bu)+(\bl')^T\Vpsi(\bu) \}.
\end{equation}
It immediately follows that
\begin{equation*}
  Q(\bl,\bl)=\Phi_p(\bl) .
\end{equation*}
Moreover, we have
\begin{equation*}
  Q(\bl,\bl')%=\frac{\left( \phi(\bu_{\bl'})+\bl^T\Vpsi(\bu_{\bl'})\right)^{\gamma + N}}{\prod_{i=1}^N\lambda_i}
  \geq \frac{\left( \min\limits_{\bu} \; \left\{\phi(\bu)+\bl^T\Vpsi(\bu)\right\}\right)^{2p}}{\prod_{i=1}^p\lambda_i} = \Phi_p(\bl) .
\end{equation*}
Thus, the function $Q(\bl,\bl')$ satisfies the surrogate conditions \eqref{eq:S}.
Observe that the surrogate function is continuous in both its arguments and differentiable with respect the first one since $\Phi_p(\bl)$ is continuous and differentiable (cfr. Theorem \ref{th:coercivity}).
Then, convergence of our MM scheme immediately follows from Theorem \ref{th:conv}.
In Theorem \ref{teo:lambda} we prove that a local minimum to the surrogate function $Q(\bl,\blk)$ can be found in a closed form. To this purpose we need  the following proposition.
\begin{proposition}\label{prop}
Let $\bz \in \mathbb{R}^p$ be defined as follows:
$$z_1= -\frac{1}{\sigma_1}, \quad z_2 = -\frac{2p}{2p-1} \frac{1}{\sigma_2}, $$
\begin{equation}\label{eq:z1}
z_k =-\frac{2p}{((2p-1)-(k-2))} \frac{1}{\sigma_k}, \qquad k \geq 3,
\end{equation}
where
$\sigma_1=\sqrt{2p-1}$, $\sigma_2=\sqrt{\frac{(2p-1)^2-1}{2p-1}}$
and
\begin{equation}\label{eq:z2}
\sigma_k = \left ( \frac{(2p-1)^2-(k-2)(2p-1)-(k-1)}{(2p-1)-(k-2)} \right)^{1/2}, \qquad k =3, \ldots, p.
\end{equation}
Then
$$\sigma_i^2+\sum_{\ell=1}^{i-1} z_\ell^2 = 2p-1, \qquad  i=1,\ldots,p , $$
and
$$
\sum_{\ell=1}^{i-1} z_\ell^2+z_i \sigma_i = -1 .
$$
\end{proposition}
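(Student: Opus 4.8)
The plan is to reduce the statement to a single polynomial identity, close the first identity by induction on $i$, and obtain the second identity as a one-line corollary of the first. Throughout I would abbreviate $q \equiv 2p-1$ and introduce $d_k \equiv q-(k-2) = 2p+1-k$ for $k=1,\ldots,p$; note that in this range $d_k \geq p+1 > 0$ and $d_{k+1}=d_k-1$.

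The first step is to observe that the three separate prescriptions in \eqref{eq:z1}--\eqref{eq:z2} (the cases $k=1$, $k=2$ and $k\geq 3$) collapse into the single pair of formulas
\begin{equation*}
\sigma_k^2 \;=\; q - \frac{k-1}{d_k}, \qquad z_k\,\sigma_k \;=\; -\frac{2p}{d_k}, \qquad k=1,\ldots,p .
\end{equation*}
Indeed, $d_1 = 2p$ recovers $\sigma_1^2 = q$ and $z_1\sigma_1 = -1$, $d_2 = q$ recovers $\sigma_2^2 = (q^2-1)/q$ and $z_2\sigma_2 = -2p/q$, and for $k\geq 3$ this is exactly \eqref{eq:z1}--\eqref{eq:z2}. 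In particular each $\sigma_k^2$ is strictly positive, so all the $\sigma_k$ and $z_k$ are well-defined real numbers and the divisions by $\sigma_k$ are legitimate.

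The computation that does the real work is the identity $q\,d_k - (k-1) = 2p\,(d_k-1)$, which holds because $q\,d_k-(k-1)-2p(d_k-1) = (q-2p)d_k-(k-1)+2p = -d_k+2p-k+1 = 0$. Dividing by $d_k$ gives the compact form $\sigma_k^2 = 2p\,d_{k+1}/d_k$, and from it I would extract the two facts needed below. First,
\begin{equation*}
z_k\sigma_k \;=\; -\frac{2p}{d_k} \;=\; \frac{2p\,d_{k+1}}{d_k} - 2p \;=\; \sigma_k^2 - q - 1 .
\end{equation*}
Second, writing $z_k^2 = (z_k\sigma_k)^2/\sigma_k^2 = (2p)^2/(d_k^2\sigma_k^2) = 2p/(d_k d_{k+1})$ and using $d_k d_{k+2} = (d_{k+1}+1)(d_{k+1}-1) = d_{k+1}^2 - 1$, one obtains
\begin{equation*}
\sigma_k^2 - \sigma_{k+1}^2 \;=\; \frac{2p\,(d_{k+1}^2 - d_k d_{k+2})}{d_k d_{k+1}} \;=\; \frac{2p}{d_k d_{k+1}} \;=\; z_k^2 , \qquad k=1,\ldots,p-1 .
\end{equation*}

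With these in hand the first identity $\sigma_i^2 + \sum_{\ell=1}^{i-1}z_\ell^2 = q$ follows by induction on $i$: the base case $i=1$ is $\sigma_1^2 = q$, and the step is $\sigma_{i+1}^2 + \sum_{\ell=1}^{i}z_\ell^2 = (\sigma_{i+1}^2 + z_i^2) + \sum_{\ell=1}^{i-1}z_\ell^2 = \sigma_i^2 + \sum_{\ell=1}^{i-1}z_\ell^2 = q$, using $z_i^2 = \sigma_i^2 - \sigma_{i+1}^2$. The second identity is then immediate: by the first identity $\sum_{\ell=1}^{i-1}z_\ell^2 = q - \sigma_i^2$, hence $\sum_{\ell=1}^{i-1}z_\ell^2 + z_i\sigma_i = (q-\sigma_i^2) + (\sigma_i^2 - q - 1) = -1$. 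There is no deep obstacle here; the statement is a bookkeeping lemma feeding into Theorem~\ref{teo:lambda}. The only point warranting care is verifying that the piecewise definitions of $\sigma_k$ and $z_k$ for $k=1,2$ really coincide with the uniform formulas above, so that the induction needs no special-casing, together with checking $d_k>0$ and $\sigma_k^2>0$ throughout $1\le k\le p$ so that every square root and every division makes sense.
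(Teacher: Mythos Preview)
Your argument is correct. The core mechanism is the same as in the paper: both proofs establish the first identity by induction on $i$, with the inductive step resting on the telescoping relation $z_k^2=\sigma_k^2-\sigma_{k+1}^2$ (the paper writes this as $\sigma_{k+1}^2+z_k^2-\sigma_k^2=0$ and verifies it by direct substitution in Appendix~\ref{App_B}).

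Where you depart from the paper is in organization, and your choices buy some economy. First, you collapse the three piecewise definitions of $\sigma_k$ and $z_k$ into the single closed forms $\sigma_k^2=q-(k-1)/d_k$ and $z_k\sigma_k=-2p/d_k$, which eliminates the case distinctions $k=1$, $k=2$, $k\geq 3$ that the paper carries throughout. Second, the paper treats the second identity by a parallel induction, relying on a separate algebraic lemma $-\sigma_k z_k+z_k^2+z_{k+1}\sigma_{k+1}=0$; you instead observe once that $z_i\sigma_i=\sigma_i^2-(q+1)$ and then read the second identity off the first in one line. Both routes are elementary, but yours is tidier and avoids a second induction and a second auxiliary identity.
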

\begin{proof}
The result will be proven by induction. Let us define the induction statement $P(i)$ composed by two parts:
$$P(i), \qquad \sigma_i^2 + \sum_{\ell=1}^{i-1} z_\ell^2 = 2p-1, \qquad \sum_{\ell=1}^{i-1} z_\ell^2 + z_i \sigma_i = -1.$$ %, \quad i = 1, \ldots, p
For $i = 1$, we have $$\sigma_1^2 + \sum_{\ell=1}^{1-1} z_\ell^2 = \sigma_1^2 = \left(\sqrt{2p-1}\right)^2 = 2p-1, $$ which satisfies the first part of $P(1)$. Similarly, 
$$\sum_{\ell=1}^{1-1} z_\ell^2 + z_1 \sigma_1 = z_1 \sigma_1 = -\frac{1}{\sigma_1} \sqrt{2p-1} = -1,$$ which  satisfies the second part of $P(1)$.

Assume $P(k)$ is true for some $k \geq 1$, i.e., $$\sigma_k^2 + \sum_{\ell=1}^{k-1} z_\ell^2 = 2p-1, \qquad \sum_{\ell=1}^{k-1} z_\ell^2 + z_k \sigma_k = -1.$$
Now, we will prove $P(k+1)$. Concerning the first part we have:
$$
\sigma_{k+1}^2 + \sum_{\ell=1}^{k} z_\ell^2 = \sigma_{k+1}^2 + z_k^2 + \sum_{\ell=1}^{k-1} z_\ell^2 ;
$$
adding the term  $(+ \sigma_k^2 -\sigma_k^2)$ and using $P(k)$ we obtain
$$ \sigma_{k+1}^2 + \sum_{\ell=1}^{k} z_\ell^2 = \sigma_{k+1}^2 + z_k^2 - \sigma_k^2 + (2p-1) . $$
Substituting  the expressions in \eqref{eq:z1}, \eqref{eq:z2} (see Appendix \ref{App_B}) we can verify that:
$$ \sigma_{k+1}^2 + z_k^2 - \sigma_k^2 =0$$
hence
$$\sigma_{k+1}^2 + \sum_{\ell=1}^{k} z_\ell^2 =(2p-1).$$
Proceeding in a similar way, we can prove the second part of $P(k+1)$:
$$\sum_{\ell=1}^{k} z_\ell^2 + z_{k +1}\sigma_{k+1} = \sum_{\ell=1}^{k-1} z_\ell^2 + z_{k}^2+z_{k +1}\sigma_{k+1} + \sigma_k z_k - \sigma_k z_k$$
and using $P(k)$ we obtain
$$
\sum_{\ell=1}^{k} z_\ell^2 + z_{k +1}\sigma_{k+1} = -1-\sigma_k z_k +  z_{k}^2 + z_{k +1}\sigma_{k+1}
$$
where, substituting   \eqref{eq:z1}, \eqref{eq:z2}
(see Appendix \ref{App_B}), we obtain:
$$ -\sigma_k z_k +  z_{k}^2 + z_{k +1}\sigma_{k+1} = 0.$$ 
Hence, we have proven $P(k+1)$, which completes the induction step.
\end{proof}
We can now prove our main result, showing that the  regularization parameters determined by the UPS \eqref{eq:UPS} constitute a local minimum to the surrogate function.
\begin{theorem}\label{teo:lambda}
The vector $\bl^*\in\mathbb{R}^p$ with components
    \begin{equation*}
  \lambda_i^* = \frac{\phi(\bu_{\bl'})}{p \psi_i(\bu_{\bl'})}, \; i=1,\ldots,p
\end{equation*}
is a local minimum for the surrogate function $Q(\bl,\bl')$.
\end{theorem}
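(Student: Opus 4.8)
The plan is to use that, with $\bl'$ held fixed, the quantities $\phi := \phi(\bu_{\bl'})$ and $\psi_i := \psi_i(\bu_{\bl'})$ are merely positive constants (recall $\psi_i \geq \epsilon$, and, for $\bl^*$ to lie in the open orthant, $\phi > 0$), so that $Q(\cdot,\bl')$ is the smooth function $S^{2p}/\prod_{i=1}^p\lambda_i$ on $(0,\infty)^p$, where $S := \phi + \sum_{i=1}^p\lambda_i\psi_i$. First I would check that $\bl^*$ is a critical point: since
\[
\frac{\partial Q}{\partial\lambda_j}(\bl) = \frac{S^{2p-1}}{\prod_i\lambda_i}\left(2p\psi_j - \frac{S}{\lambda_j}\right),
\]
stationarity amounts to $2p\lambda_j\psi_j = S$ for every $j$; this forces all products $\lambda_j\psi_j$ to share a common value $c$, whence $S = \phi + pc = 2pc$, i.e. $c = \phi/p$ and $\lambda_j = \phi/(p\psi_j)$, which is exactly $\bl^*$ (and then $S^* = 2\phi$).

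Next I would show that $\nabla^2 Q(\bl^*)$ is positive definite, which, since $\bl^*$ is interior to the domain, yields a strict local minimum. Writing $T_j := 2p\psi_j - S/\lambda_j$, so that $\partial Q/\partial\lambda_j = \tfrac{S^{2p-1}}{\prod_i\lambda_i}\,T_j$, and using $T_j(\bl^*) = 0$, every term carrying a factor $T_j$ disappears from $\partial^2 Q/\partial\lambda_j\partial\lambda_k$, leaving only $\tfrac{(S^*)^{2p-1}}{\prod_i\lambda_i^*}\,\partial T_j/\partial\lambda_k$ evaluated at $\bl^*$. A short computation, using $1/\lambda_j^* = p\psi_j/\phi$ and $S^* = 2\phi$, gives
\[
\frac{\partial^2 Q}{\partial\lambda_j\partial\lambda_k}(\bl^*) = \frac{(S^*)^{2p-1}\,p}{\phi\,\prod_i\lambda_i^*}\,\bigl(2p\,\delta_{jk}\psi_j^2 - \psi_j\psi_k\bigr),
\]
that is, $\nabla^2 Q(\bl^*) = c\,D\,M\,D$ with $c > 0$, $D = \diag(\psi_1,\dots,\psi_p)$, and $M$ the symmetric matrix with diagonal entries $2p-1$ and off-diagonal entries $-1$ (equivalently $M = 2pI - \mathbf{1}\mathbf{1}^T$).

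It then remains to prove $M \succ 0$, and this is exactly where Proposition~\ref{prop} is used: the numbers $\sigma_i$ and $z_i$ it produces are the entries of a lower-triangular matrix $R$, with $R_{ii} = \sigma_i$ and $R_{ij} = z_j$ for $j < i$, and the two identities $\sigma_i^2 + \sum_{\ell<i}z_\ell^2 = 2p-1$ and $\sum_{\ell<i}z_\ell^2 + z_i\sigma_i = -1$ say precisely that $RR^T = M$ (the first gives the diagonal entries of $RR^T$, the second the off-diagonal ones). Since each $\sigma_i$ is a nonzero real, $R$ is invertible, so $\nabla^2 Q(\bl^*) = c\,(DR)(DR)^T$ is symmetric positive definite; hence $\bl^*$ is a strict local minimizer of $Q(\cdot,\bl')$.

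The only delicate part is the Hessian bookkeeping: tracking which terms survive once the stationarity relations $2p\lambda_j^*\psi_j = S^*$ are imposed, and recognizing the surviving matrix as a rescaling of $2pI - \mathbf{1}\mathbf{1}^T$. Once that structure is in hand, positive definiteness is immediate from Proposition~\ref{prop} (or, alternatively, from the observation that $2pI - \mathbf{1}\mathbf{1}^T$ has eigenvalues $p$, with eigenvector $\mathbf{1}$, and $2p$ with multiplicity $p-1$). One should also record the standing assumption $\phi(\bu_{\bl'}) > 0$, without which $\bl^*$ would sit on the boundary of $(0,\infty)^p$ and ``local minimum'' would need reinterpretation.
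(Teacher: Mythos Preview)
Your argument is correct and follows the same route as the paper: verify stationarity, compute the Hessian at $\bl^*$, and establish positive definiteness via the Cholesky-type factorization supplied by Proposition~\ref{prop}. Your decomposition $H = DMD$ with $M = 2pI - \mathbf{1}\mathbf{1}^T$ is just a tidier organization of the same computation (the paper applies the Cholesky step directly to $H$, absorbing the $\psi_i$ factors into its $L$, so that $L = DR$ in your notation). The one genuinely different ingredient is your parenthetical eigenvalue remark: observing that $M = 2pI - \mathbf{1}\mathbf{1}^T$ has spectrum $\{p,\,2p,\dots,2p\}$ proves $M\succ 0$ in one line and makes Proposition~\ref{prop} unnecessary for this theorem, whereas the paper invests real effort in the explicit Cholesky factor and in verifying that each $\sigma_k$ is real and positive. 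If you do want to keep the Cholesky route as the primary argument, note that Proposition~\ref{prop} only records the algebraic identities and does not itself check that the radicands defining $\sigma_k$ are positive; the paper supplies that verification separately, and you should too.
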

\begin{proof}
    We prove this results by showing that $\bl^*$  satisfies the second order sufficient condition.
    We have
    \begin{equation}\label{eq:dim1}
        \frac{\partial Q(\bl,\bl')}{\partial \lambda_j} = \frac{\left( \phi(\bu')+\bl^T \Vpsi(\bu')\right)^{2p-1}}{\prod_{i=1}^p\lambda_i} \cdot
        \left( 2p \psi_j(\bu')-\frac{1}{\lambda_j}\left( \phi(\bu')+\bl^T \Vpsi(\bu')\right)\right)
    \end{equation}
    where, for easier notation, $\bu'$ denotes $\bu_{\bl'}$. Since $\phi(\bu')+(\bl^*)^T \Vpsi(\bu') = 2\phi(\bu')$, by evaluating the second term at $\bl^*$, we obtain
    \begin{equation}\label{eq:dim2}
      2p \psi_j(\bu')-\frac{1}{\lambda_j^*}\left( \phi(\bu')+(\bl^*)^T \Vpsi(\bu')\right) =
      2p \psi_j(\bu')-\frac{p \psi_j(\bu')}{\phi(\bu')} \cdot 2\phi(\bu') = 0.
    \end{equation}
    Thus $\bl^*$ is a critical point and the result is proved if we show that the Hessian matrix of $Q$ at $(\bl^*,\bl')$ is positive definite. By combining \eqref{eq:dim1} and \eqref{eq:dim2}, we obtain
    \begin{multline*}
      \frac{\partial^2}{\partial \lambda_\ell \partial \lambda_j}Q(\bl,\bl')_{ \big| \bl = \bl^*} = \\
      \frac{\left( 2 \phi(\bu')\right)^{2p-1}}{\prod_{i=1}^p\lambda_i^*} \cdot \frac{\partial }{ \partial \lambda_\ell}\left( 2p \psi_j(\bu')-\frac{1}{\lambda_j}\left( \phi(\bu')+\bl^T \Vpsi(\bu')\right)\right)_{ \big| \bl = \bl^*}
    \end{multline*}
     and
     \begin{multline*}
       \frac{\partial }{ \partial \lambda_\ell}\left( 2p \psi_j(\bu')-\frac{1}{\lambda_j}\left( \phi(\bu')+\bl^T \Vpsi(\bu')\right)\right) = \\
       \left\{
         \begin{array}{ll}
           \displaystyle{-\frac{1}{\lambda_j}\psi_\ell(\bu')}, & \hbox{if $j\neq \ell$;} \\
           \displaystyle{-\frac{1}{\lambda_\ell}\psi_\ell(\bu')+\frac{1}{\lambda_\ell^2}\left( \phi(\bu')+\bl^T \Vpsi(\bu')\right)}, & \hbox{if $j=\ell$.}
         \end{array}
       \right.
     \end{multline*}
     Therefore
 \begin{equation*}
      \frac{\partial^2}{\partial \lambda_\ell \partial \lambda_j}Q(\bl,\bl')_{ \big| \bl = \bl^*} =
      \frac{\left( 2 \phi(\bu')\right)^{2p-1}}{\prod_{i=1}^p\lambda_i^*} \cdot
       \left\{
         \begin{array}{ll}
           \displaystyle{-\frac{p}{\phi(\bu')}\psi_j(\bu')\psi_\ell(\bu')}, & \hbox{if $j\neq \ell$;} \\
           \displaystyle{\frac{p}{\phi(\bu')}(2p-1)\psi_\ell(\bu')^2}, & \hbox{if $j=\ell$;}
         \end{array}
       \right.
    \end{equation*}
where we have used
\begin{equation*}
  -\frac{1}{\lambda^*_\ell}\psi_\ell(\bu')+\frac{1}{{\lambda^*_\ell}^2}\left( \phi(\bu')+(\bl^*)^T \Vpsi(\bu')\right) =
-\frac{p}{\phi(\bu')}\psi_\ell(\bu')^2 + 2\phi(\bu') \frac{p^2}{\phi(\bu')^2}\psi_\ell(\bu')^2 .
\end{equation*}
Hence,
\begin{equation*}
  \nabla^2 Q(\bl^*,\bl') =
       \frac{p \left( 2 \phi(\bu')\right)^{2p-1}}{\phi(\bu') \prod_{i=1}^p\lambda_i^*} H
\end{equation*}
where $H$ is the symmetric matrix defined as
\begin{equation*}
  H_{j\ell}= \left\{
         \begin{array}{ll}
           -\psi_j(\bu')\psi_\ell(\bu'), & \hbox{if $j\neq \ell$} \\
           (2p-1)\psi_\ell(\bu')^2, & \hbox{if $j=\ell$}
         \end{array}
       \right. , \quad j,\ell=1,\ldots,p .
\end{equation*}
We prove that $H$ is positive definite by showing that it possesses a unique Cholesky factorization \cite{GolubVanLoan1996}.
Indeed, applying the Cholesky procedure to $H$  
we obtain a lower triangular matrix $L$  defined as follows:
\begin{equation}\label{eq:chol}
\begin{aligned}
&    L_{1,1} = \psi_1(\bu')\sqrt{2p-1}, \\
&    L_{i,1}= - \psi_i(\bu') / \sqrt{2p-1}, \qquad i=2, \ldots , p , \\
&    L_{2,2} = \psi_2(\bu') \sigma_2, \ \sigma_2=\sqrt{\frac{(2p-1)^2-1}{2p-1}} , \\
&    L_{i,2} = - \psi_i(\bu') \frac{2p}{2p-1}/ \sigma_2, \qquad i=3, \ldots , p , \\
&    k=3, \ldots, p  \\
&    \hspace{3mm} L_{k,k} = \psi_k(\bu') \sigma_k, \ \sigma_k = \left ( \frac{(2p-1)^2-(k-2)(2p-1)-(k-1)}{(2p-1)-(k-2)} \right)^{1/2},  \\
&    \hspace{3mm} L_{i,k} = - \psi_i(\bu') \frac{2p}{\sigma_k((2p-1)-(k-2)}, \ i=k+1,\ldots, p.
\end{aligned}
\end{equation}
Now we prove that $H=LL^T$ and $L$ has positive diagonal elements. \\
Defining $\sigma_1=\sqrt{2p-1}$ we can write each row of $L$ in \eqref{eq:chol} as follows:
$$L_{i,\cdot}= \psi_i(\bu')\left ( z_1, z_2, \ldots, z_{i-1}, \sigma_i, 0, \ldots, 0 \right )^T, \quad i>1 , $$
where
$$z_1= -\frac{1}{\sigma_1}, \quad z_2 = -\frac{2p}{2p-1} \frac{1}{\sigma_2}, \qquad
z_k =-\frac{2p}{((2p-1)-(k-2))} \frac{1}{\sigma_k}, \qquad k \geq 3.$$
Using Proposition \ref{prop} with $N=p$, we can easily check the Cholesky factorization of $H$. In the case $i=j$ we have
$$
H_{i,i} = \sum_{\ell=1}^{p} L_{i,\ell}L_{i,\ell} = \left(\sigma_{i}^2+\sum_{\ell=1}^{i-1} z_\ell^2\right )\psi_i(\bu')^2  =(2p-1)\psi_i(\bu')^2 , \qquad i=1, \ldots, p . $$
When $i\neq j$, for symmetry we consider  $1<j<i\leq p$:
$$
H_{i,j} = \sum_{\ell=1}^{p} L_{i,\ell}L_{j,\ell} = \sum_{\ell=1}^{j} L_{i,\ell}L_{j,\ell} =\psi_i(\bu')\psi_j(\bu')\left ( \sum_{\ell=1}^{j-1} z_\ell^2+z_j \sigma_j\right ) = -\psi_i(\bu')\psi_j(\bu').
$$
We can check that the diagonal entries of $L$, given by: 
$$L_{k,k}=\psi_k(\bu') \sigma_k > 0, \qquad  \psi_k(\bu') > 0 \ k=1, \ldots, p$$
are all positive.  Indeed it is straighforward for $k=1, 2$:
$$\sigma_1 = \sqrt{2p-1} > 0, \quad p>0$$
and   $\sigma_2$ in \eqref{eq:chol} is positive for $p>1$.
For the terms $k=3, \ldots, p$, we consider $f(k) \equiv \sigma_k^2$, i.e.
$$ f(k) = \frac{(2p-1)^2-(k-2)(2p-1)-(k-1)}{(2p-1)-(k-2)}, $$
and  observe that $p+1 \leq (2p-1)-(k-2)\leq 2p$, hence
$$f(k) \geq \frac{(2p-1)^2-(k-2)(2p-1)-(k-1)}{2p}\equiv \mathcal{G}(k) . $$
Since $\mathcal{G}'(k)=-1$ then the minimum is reached at $k=p$, i.e.
$$ f(k) \geq  \frac{2p^2}{2p}=p, \quad  p \geq 1.$$
Therefore $\sigma_k \geq \sqrt{p}>0$, for $k \geq 3$.  
Computing  the Cholesky factorization of $H$ we have proved that it is positive definite for all $p$. This concludes the proof.
\end{proof}
At the $k$th iteration of our MM scheme, the construction of the surrogate function $Q(\bl,\blk)$ in the majorization step requires the solution of a constrained quadratic optimization problem \eqref{eq:ul}. %in order to compute $\bu_{\blk}$. 
When the feasible set $\Omega$ is the nonnegative orthant, this task can be efficiently performed by using a gradient projection-type method \cite{ber99}. By Theorem \ref{teo:lambda}, the
minimization step, leads to 
 \begin{equation}\label{eq:update_lambda}
  \lambda_i^{(k+1)} = \frac{\phi(\bu_{\bl^{(k)}})}{p\psi_i(\bu_{\bl^{(k)}})}, \; i=1,\ldots,p .
\end{equation}
We remark that the regularization parameter vector $\bl^{(k+1)}$ obtained via \eqref{eq:update_lambda} is nonnegative.
Moreover, assumption $\psi_i(\bu)>\epsilon$, $i=1,\ldots,p$, prevents division by zero. 
We refer to our iterative method for solving model \eqref{eq:UPS} as UpenMM and we sketch it in Algorithm \ref{alg:MM} where, for easier notation, $\bu^{(k)}$ denotes $\bu_{\blk}$.
\begin{algorithm}[h]
    \caption{\name{} \label{alg:MM}}
    {\small
    \begin{algorithmic}[1]
        \STATE Choose $\bl^{(0)}\in\mathbb{R}^p$, and set $k=0$.
        \REPEAT
        \STATE {$\displaystyle \bu^{(k)} = \arg\min\limits_{\bu\in\Omega} \; \left \{ \frac{1}{2}\|A\bu-\bb\|^2+ \sum_{i=1}^p \lambda_i^{(k)} \psi_i (\bu) \right \}$ \label{alg:step1}}
        \STATE $ \lambda_i^{(k+1)} = \displaystyle{\frac{\|A\bu^{(k)}-\bb\|^2}{p \psi_i(\bu^{(k)})}}, \; i=1,\ldots,p$
        \STATE $ k = k+1 $
        \UNTIL{a {\tt stopping criterion} is satisfied}
    \end{algorithmic}
    }
\end{algorithm}
In our implementation, we stop Algorithm \ref{alg:MM} when the relative change in the
computed parameter vector $\bl$ goes below a certain threshold $Tol_{\lambda} \in (0, 1)$ or a maximum number of iterations $k_\text{max}$ is reached.
\begin{remark}
    Algorithm \ref{alg:MM} was introduced in \cite{ito2011multi} as a fixed-point like method for the numerical realization of the BP with two penalty terms. However, its convergence was not proven for the general case $p>1$.%properties were not analysed.
\end{remark}

\subsection{Generalized \name{}}
UPenMM is a general framework which embeds different regularization functions $\psi_i:\Omega \to \R^+$.
We notice, however, that a converging procedure can still be obtained even if, instead of using the exact minimum of the surrogate function, we compute a vector $\bl^{(k+1)}$ satisfying the descent condition \cite{Dempster,Sun2017}
\begin{equation}\label{eq:decrease}
    Q(\bl^{(k+1)},\bl^{(k)}) \leq Q(\bl^{(k)},\bl^{(k)}).
\end{equation}
Let us denote by $\widehat{\bl}$ the exact minimizer of the surrogate function  defined in \eqref{eq:update_lambda}, and let $\widetilde{\bl}$ be an approximation obtained by a different relation
(for easier notation we have omitted the index $k$). We propose a generalized \name{} (G\name{}) algorithm, that uses a convex combination of $\widehat{\bl}$ and $\widetilde{\bl}$. More precisely, given $\varepsilon\in (0,1)$, the next iterate $\bl^{(k+1)}$ is the first element of the sequence 
\begin{equation}\label{eq:decr}
    \{ \; \varepsilon^j \widetilde{\bl} + (1-\varepsilon^j) \widehat{\bl}, \; j=0,1,2,\ldots \}
\end{equation}
which satisfies \eqref{eq:decrease}. We observe that \eqref{eq:decr} converges to $ \widehat{\bl}$ as $j \to \infty$, that obviously satisfies \eqref{eq:decrease}. The generalized algorithm, through appropriate strategies for computing $\widetilde{\bl}$, allows for decisive improvements in the case of real data affected by noise.
G\name{} is outlined in Algorithm \ref{alg:GMM} using  the same {\tt stopping criterion} as in Algorithm \ref{alg:MM}.
\begin{algorithm}[h]
    \caption{G\name{} \label{alg:GMM}}
    {\small
    \begin{algorithmic}[1]
        \STATE Choose $\bl^{(0)}\in\mathbb{R}^p$, $\varepsilon\in(0,1)$, and set $k=0$.
        \REPEAT
        \STATE {$\displaystyle \bu^{(k)} = \arg\min\limits_{\bu\in\Omega} \;  \left \{ \frac{1}{2} \|A\bu-\bb\|^2+ \sum_{i=1}^p \lambda_i^{(k)} \psi_i (\bu) \right \}$ }
        \STATE Compute $\widehat{\bl}^{(k+1)}$ and $\widetilde{\bl}^{(k+1)}$;  set $\bl^{(k+1)}=\widetilde{\bl}^{(k+1)}$ and $j=0$
        \WHILE{%
        $Q(\bl^{(k+1)},\bl^{(k)}) > Q(\bl^{(k)},\bl^{(k)})$ %
        }
        \STATE $j=j+1$
        \STATE $\bl^{(k+1)}= \varepsilon^j \widetilde{\bl}^{(k+1)} + (1-\varepsilon^j) \widehat{\bl}^{(k+1)} $
        \ENDWHILE
        \STATE $ k = k+1 $
        \UNTIL{a {\tt stopping criterion} is satisfied}
    \end{algorithmic}
    }
\end{algorithm}
%----------------------------------------------------------------------------------------------
\section{Numerical Results\label{sec:experiments}}
This section presents results derived from the application of \name{} and G\name{} to two-dimensional NMR relaxometry problems. We focus on Gaussian noise as it aligns with the data fitting function embedded in our model.
We analyze the convergence of \name{} and G\name{} in terms of the decrease of the surrogate function and evaluate their performance in efficiently reducing relative errors while recovering various complex features.
Analogous results are observed in one-dimensional tests, which are provided in the supplementary materials. These materials also include the convergence history and comparisons between G\name{} and other methods, such as Tikhonov regularization, confirming the effectiveness of \name{} and G\name{} in recovering different signal features, including peaks, smooth rounded areas, and ramps, at varying noise intensities.
All experiments were conducted using Matlab R2023b on an Apple M1 computer with 16 GB of RAM. The codes used for the current experiments can be made available upon request to the authors.

We consider a test problem derived from Nuclear Magnetic Resonance relaxometry applications \cite{bortolotti20212dnmr}.
The problem can be expressed as:
\begin{equation}
\bb = (\mathbf{K}_2 \otimes \mathbf{K}_1) \bu  + \mathbf{e}
\end{equation}
where $\mathbf{K}_1\in \mathbb{R}^{M_1 \times N_1}$ and $\mathbf{K}_2\in \mathbb{R}^{M_2 \times N_2}$ denote the discretized kernels of a separable Fredholm integral equation.
The vector $\mathbf{b} \in \mathbb{R}^{M}$, with $M=M_1 \cdot M_2$, represents the measured relaxation data from an Inversion Recovery (IR) Carr-Purcell-Meiboom-Gill (CPMG) pulse sequence \cite{bortolotti20212dnmr}. The unknown vector $\bu \in \mathbb{R}^{N}$ corresponds to the vector reordered two-dimensional relaxation distribution of size  $N_1 \times N_2$. %,  where $N=N_1 \cdot N_2$. 
The term $\mathbf{e} \in \mathbb{R}^{M}$ represents the additive Gaussian noise with a magnitude of $0.01$.
In this test, we have $N_1=N_2=80$, which implies $N=6400$, and $M_1=128$, $M_2=2048$, resulting in $M=262144$.
The \name{} and G\name{} iterations are stopped according  to  the following {\tt stopping criterion}:
\begin{equation}\label{eq:SC}
\| \Veta^{(k+1)}-\Veta^{(k)} \| \leq \| \Veta^{(k)} \| Tol_{\lambda} % cambiato FZ 10/07/24
\end{equation}
with $Tol_{\lambda}=10^{-2}.$

The regularization model employed here has $N$ local $L_2$ penalty functions and one global $L_1$ penalty, i.e., $\psi_i(\bu)=(L\bu)_i^2+\epsilon$  for $i=1,\ldots,N$, with $\epsilon = 10^{-6}$,  and $\psi_{N+1}=\|\bu \|_1$. Using \name, this can be expressed as:
\begin{equation}
\lambda_i^{(k+1)} = \frac{\|A\bu^{(k)}-\bb\|^2}{(N+1) \left ((L\bu^{(k)})_i^2+\epsilon \right )}, \; i=1,\ldots,N, \; \lambda_{N+1}^{(k+1)} = \frac{\|A\bu^{(k)}-\bb\|^2}{(N+1) \left(\|\bu^{(k)} \|_1+\epsilon \right )}.
\end{equation}
However, in practice, \name{}  provides suboptimal solutions. A better approach involves the generalized formula:
\begin{equation}
\widetilde{\lambda}_i^{(k+1)} = \frac{\|A\bu^{(k)}-\bb\|^2}{N \widetilde{\psi}_i(\bu^{(k)})}, \quad i=1,\ldots,N, \quad \widetilde{\lambda}_{N+1}^{(k+1)} = \frac{\|A\bu^{(k)}-\bb\|^2}{\|\bu^{(k)} \|_1}.
\end{equation}
The modified penalty functions are given by:
\begin{equation}
\widetilde{\psi}_i(\bu^{(k)}) = \max_{i \in \mathcal{I}_i} (L\bu^{(k)})_i^2+ \max_{i\in \mathcal{I}_i} (P\bu^{(k)})_i^2 + \epsilon, \quad i=1,\ldots,N
\end{equation}
where $\mathcal{I}_i$ denotes the index subsets of the neighborhood around the $i$th point, for $i=1,\ldots,N$. Additionally, $P$ represents the matrix of the central finite difference approximation of the first-order differential operator. The subproblem at step 3 of Algorithms \ref{alg:MM} and \ref{alg:GMM} is solved by using the FISTA method \cite{BeckTeboulle2009}.
%The contour plots of the solution, depicted in Figure \ref{fig:2D_sol}, demonstrate the superior performance of G\name{}, which is further confirmed by the error history shown in Figure \ref{fig:2D_alg}(a).
\FZcomment[The contour plots in Figure \ref{fig:2D_sol} highlight the superior performance of G\name{}. In particular, panel (c) shows more smoothing of the peak values compared to panel (b). Despite this, panel (c) provides a much better fit to the tail and background regions, which are not as well-represented in panel (b). This trade-off between peak smoothing and improved tail and background fitting makes panel (c) a more balanced and accurate representation overall. This observation is further supported by the error history in Figure \ref{fig:2D_alg}(a).
]
The descent properties of the surrogate functions can be seen in Figure \ref{fig:2D_alg}(b).
From a computational cost perspective, G\name{} is notably more efficient than \name{}, requiring only $6551$ inner FISTA iterations compared to $83675$ iterations.
One critical aspect in implementing condition \eqref{eq:decrease} for 2D data is ensuring overflow errors are avoided, especially considering the large value of $p$. By defining:
\begin{align}
Q(\bl^{(k+1)},\bl^{(k)}) &= \frac{\left( \phi(\bu_{\bl^{(k)}})+\left (\bl^{(k+1)}\right)^T\Vpsi(\bu_{\bl^{(k)}})\right)^{2p}}{\prod_{i=1}^p\lambda_i^{(k+1)}} \equiv \left (\frac{f_{k+1}^2}{\pi_{k+1}^{1/p}} \right )^p, \\
Q(\bl^{(k)},\bl^{(k)}) &= \frac{\left( \phi(\bu_{\bl^{(k)}})+ \left (\bl^{(k)}\right )^T\Vpsi(\bu_{\bl^{(k)}})\right)^{2p}}{\prod_{i=1}^p\lambda_i^{(k)}} \equiv \left(\frac{f_{k}^2}{\pi_{k}^{1/p}}\right )^p,
\end{align}
we can validate the decrease condition as follows:
\begin{equation}
f_{k+1}^2 \pi_{k}^{1/p} < f_{k}^2 \pi_{k+1}^{1/p},
\end{equation}
scaling  $\pi_{k}$ and $\pi_{k+1}$ to avoid zero values caused by underflow errors.
\begin{figure}[htbp]
\begin{center}
(a) \hspace{3cm} (b) \hspace{3cm} (c) \\
\includegraphics[width=4.5cm]{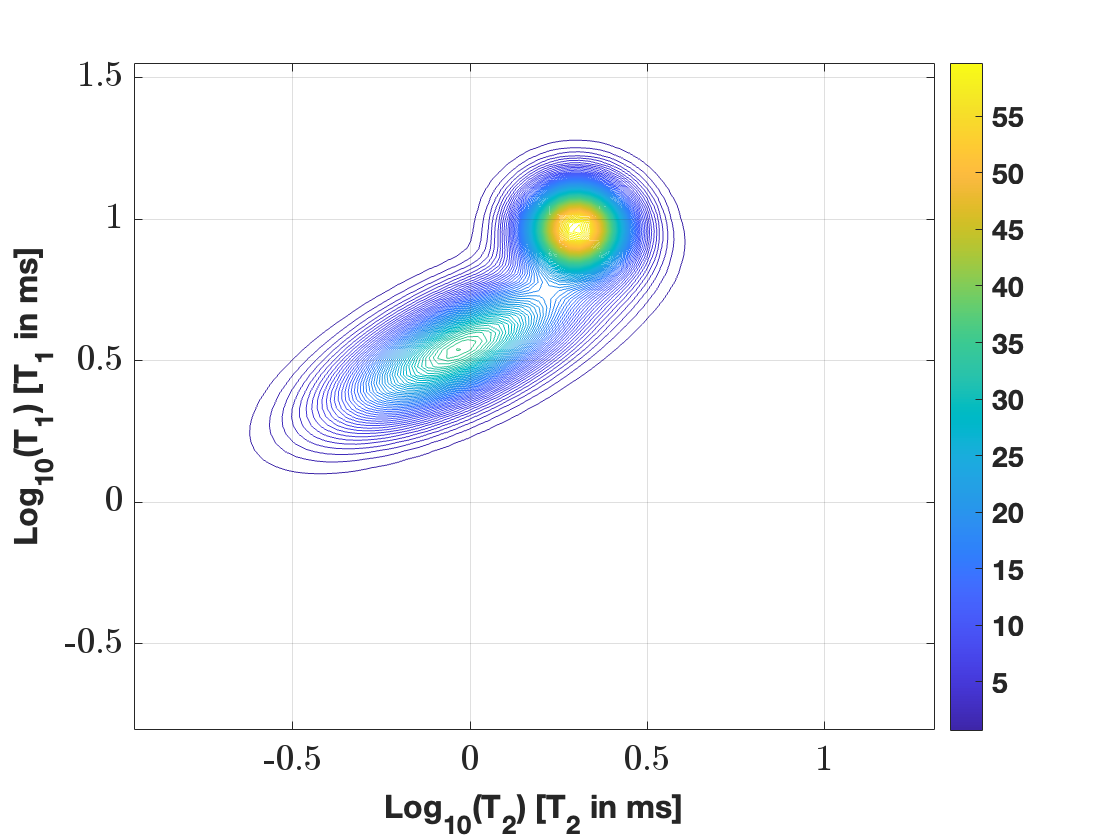}\hspace{-.5cm}
\includegraphics[width=4.5cm]{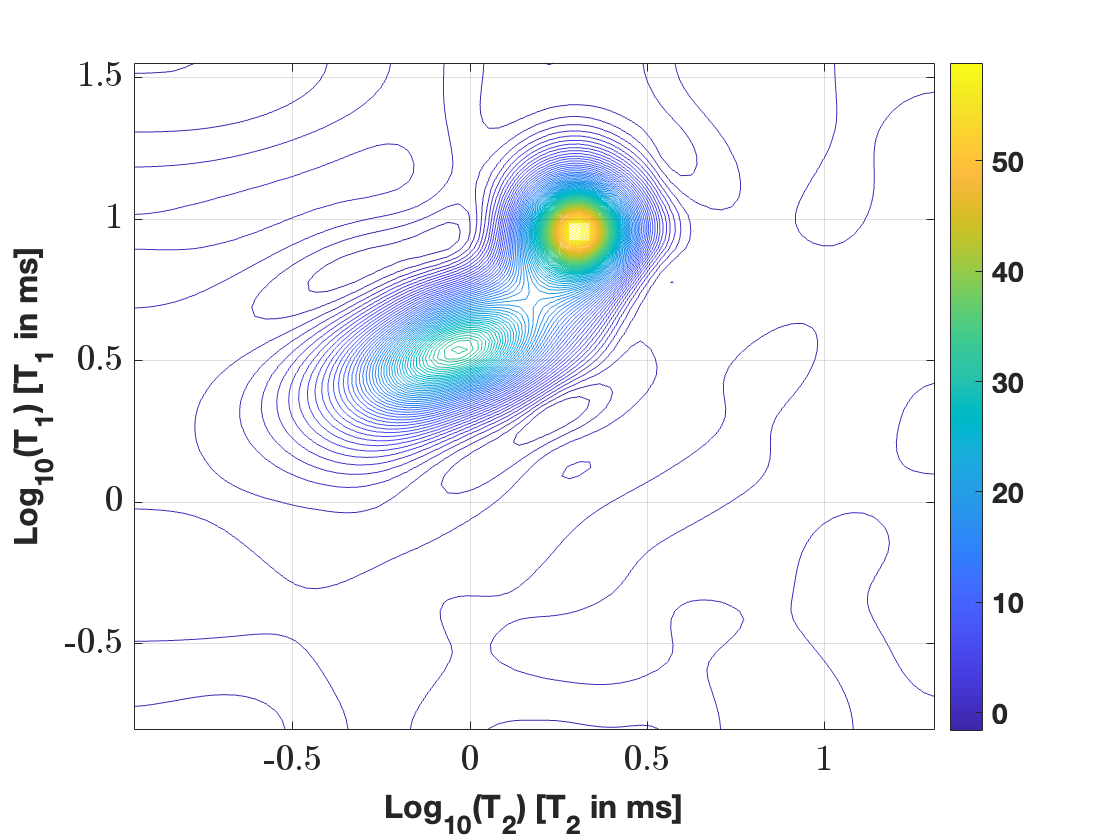}\hspace{-.5cm}
\includegraphics[width=4.5cm]{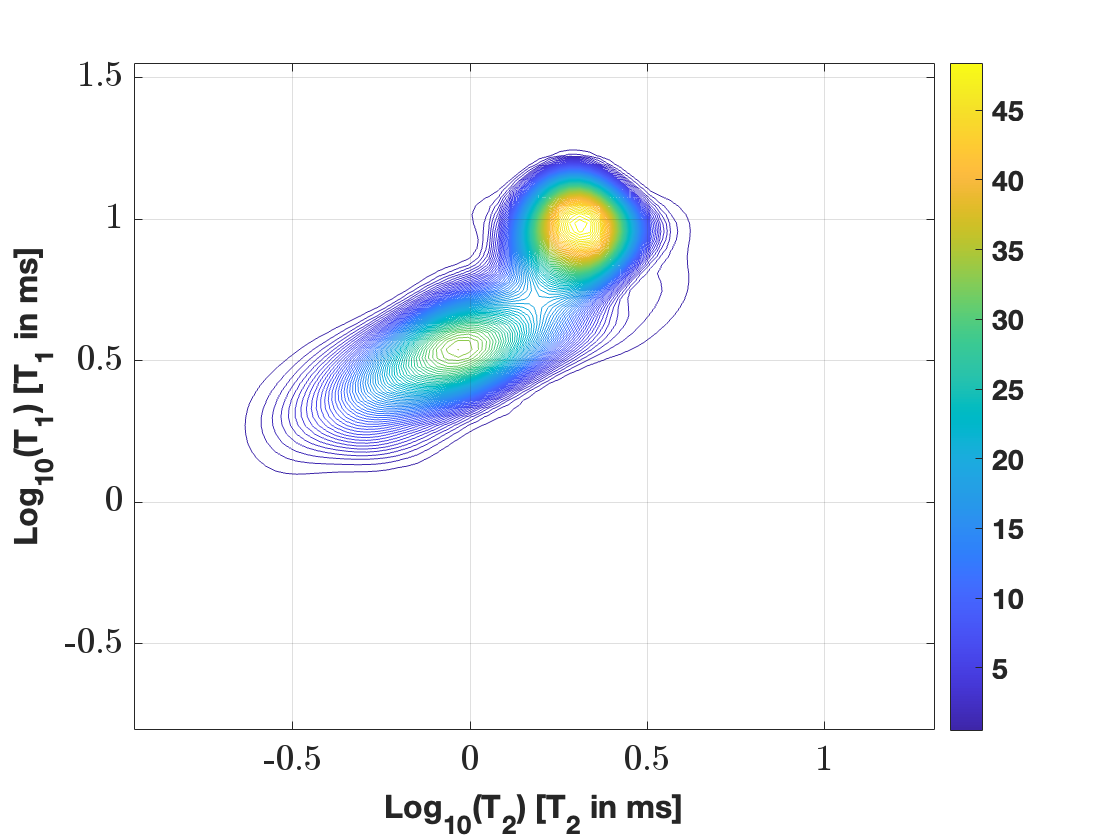}
\end{center}
\caption{2D  test problem contour plots of the computed solution $\bu$. (a)  true solution. (b) \name{}
Relative error: $1.397 \ 10^{-1}$. (c) G\name{}, Relative error: $1.2245 \ 10^{-1}$.}
\label{fig:2D_sol}
\end{figure}
\begin{figure}[htbp]
\begin{center}
(a) \hspace{5cm} (b) \\
\includegraphics[width=6cm]{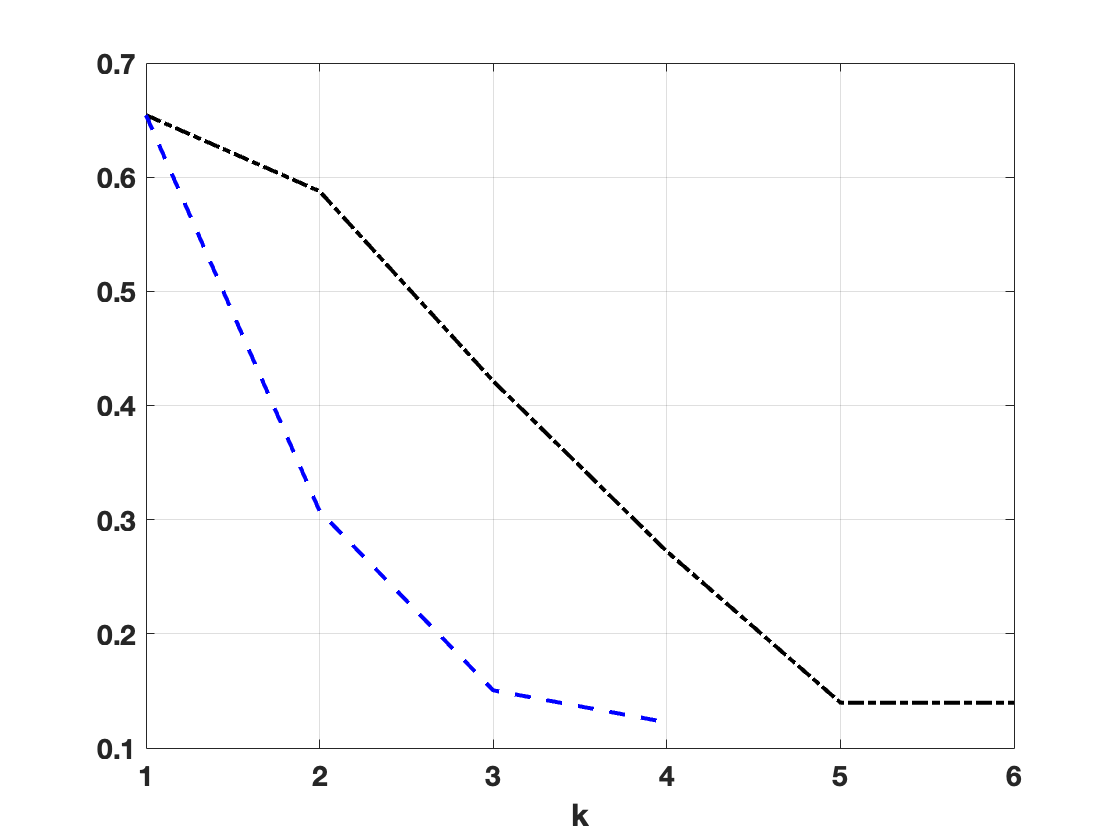}\hspace{-.6cm}
\includegraphics[width=6cm]{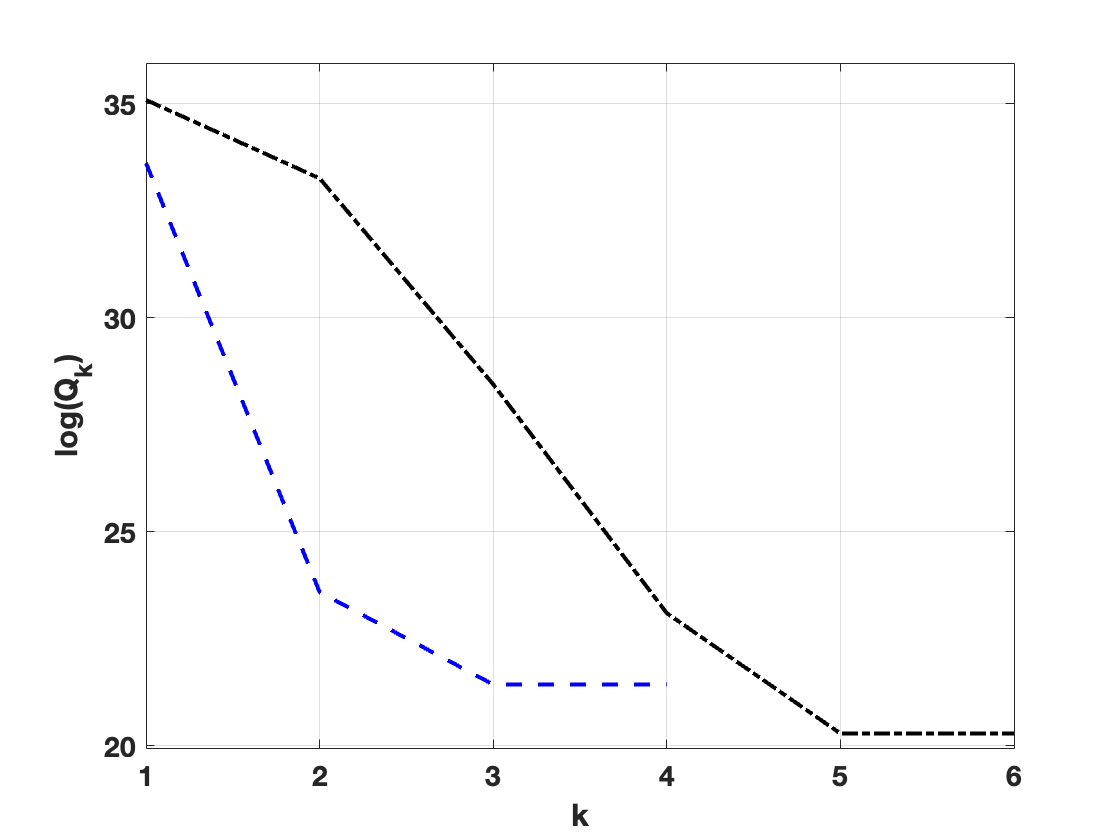}
\end{center}
\caption{2D  test problem. (a) Relative errors. (b) Surrogate function. 
Black dash-dot line are obtained by Algorithm \ref{alg:MM} and  blue dashed line are given by  Algorithm \ref{alg:GMM}.
}
\label{fig:2D_alg}
\end{figure}
\begin{remark}
The functions $\widetilde{\psi}_i(\bu)$ %given by \eqref{eq:newpsi} 
are obtained by applying a nonlinear maximum filter. 
The practice of introducing a nonlinear maximum filter in one or two dimensions was known from the earliest works \cite{borgia1998uniform,bortolotti2016uniform}. It allows for better control of instabilities caused by noise and improves the quality of approximations. In this work, we demonstrate that it fits within the context of the generalized \name{} method.   By controlling the decrease of the surrogate function, 
we obtain G\name{}, an efficient  algorithm   where  in general less than two convex combinations in \eqref{eq:decr} are requested (i.e. $j \leq 2$) for each step.
With this modification, the sequence of approximate solutions and regularization parameters rapidly converge, confirming what has been known heuristically for several years.
\end{remark}
%-----------------------------------------------------------------------------------------------
%
%==================================================================================
\section{Conclusions}\label{sec:conclusions}
This paper provides the theoretical foundation of point-wise and multi-penalty regularization through the principle of uniform penalty. The numerical realization of such a principle   has been in use since the '90s and has been successfully applied to multiple application problems for its efficiency and relative simplicity. However, until now, there was a lack of a theoretical analysis that characterized the solutions and proved convergence in the most general case. With this work, we have provided the characterization of the solutions through the hierarchical Bayesian approach. We have extended to the case of point-wise and multi-penalty with bound constraints, what was available in the  literature for unconstrained solutions. Moreover, using the fundamental contributions of Ito et al. \cite{ito2014multi}, we have formalized the proof of convergence of the UPenMM method for very general regularization functionals. To achieve this, it was essential to view this method in the context of majorization-minimization algorithms. This is another original contribution of this work: through the identification of an appropriate surrogate function, it was possible to verify the convergence of UPenMM. Finally, the introduction of a generalized MM approach allowed us to include, in this theoretical framework, heuristic techniques used to address distortions due to noise in real data.

In conclusion, this study demonstrates the formal validity of the original UPEN algorithm, developed heuristically, and lays the foundation for its extension to inverse problems of a different nature
compared to the application field (NMR relaxometry) for which UPEN was initially conceived.

Future work includes investigating such a strategy on data affected by non-Gaussian noise, which could lead to nonlinear inverse problems.
%-------------------------------------------------------------------------------------------
\appendix
%\input{Appendix_A}
%-------------------------------------------------------------------------------------------
\section{Proof of Theorem \ref{teo1} (details)} \label{App_A} \\
When $\gamma > 0$ and $\psi_i (\Veta) \neq 0 
 \ \forall i$, we can apply the Shermann Morrison formula:
$$\left (\mathbf{D} - \mathbf{1}\Vpsi(\Veta)^T \right )^{-1} = \mathbf{D}^{-1} - \frac{\mathbf{D}^{-1}w v^T\mathbf{D}^{-1}}{ 1 + v^T\mathbf{D}^{-1}w} , $$
where 
\HIcomment[
$$w=\mathbf{1}, v=-(\psi_1(\Veta), \ldots, \psi_p(\Veta))^T, \text{and } \mathbf{D}^{-1} =  \frac{1}{p+\gamma} \begin{pmatrix}
 \tfrac{1}{\psi_1(\Veta)}  & \\
 0 & \tfrac{1}{\psi_2(\Veta)}  & \\
 0 & 0 & \ddots & \\
  0 & \cdots & 0 & \tfrac{1}{\psi_p(\Veta)} 
  \end{pmatrix}. $$]
\HIcomment[ Then we have: $$
\begin{aligned}
& \mathbf{D}^{-1}w = \frac{1}{p+\gamma} \mathbf{1}, \quad
 \mathbf{D}^{-1}v = -\frac{1}{p+\gamma} \begin{pmatrix}
\tfrac{1}{\psi_1(\Veta)}\\
\tfrac{1}{\psi_2(\Veta)}\\
\vdots \\
\tfrac{1}{\psi_p(\Veta)}
\end{pmatrix},\\
 & \text{ and }\quad \mathbf{D}^{-1}w v^T\mathbf{D}^{-1}  =  -\frac{1}{(p+\gamma)^2}  \begin{pmatrix}
 \tfrac{1}{\psi_1(\Veta)} & \tfrac{1}{\psi_1(\Veta)}& \cdots & \tfrac{1}{\psi_1(\Veta)} \\
\tfrac{1}{\psi_2(\Veta)} & \tfrac{1}{\psi_2(\Veta)}& \cdots & \tfrac{1}{\psi_2(\Veta)} \\
\vdots & \vdots & \ddots & \vdots \\
\tfrac{1}{\psi_p(\Veta)} & \tfrac{1}{\psi_p(\Veta)} & \cdots &  \tfrac{1}{\psi_p(\Veta)}
\end{pmatrix}.  
\end{aligned}
$$]
Hence
$$
\begin{aligned}
& \left (\mathbf{D} - \mathbf{1}\Vpsi(\Veta)^T \right )^{-1} = \\
& \frac{1}{p+\gamma} \left [ \begin{pmatrix}
 \tfrac{1}{\psi_1(\Veta)}  & \\
 0 & \tfrac{1}{\psi_2(\Veta)}  & \\
 0 & 0 & \ddots & \\
  0 & \cdots & 0 & \tfrac{1}{\psi_p(\Veta)} 
  \end{pmatrix} + \frac{1}{\gamma}  \begin{pmatrix}
 \tfrac{1}{\psi_1(\Veta)} & \tfrac{1}{\psi_1(\Veta)}& \cdots & \tfrac{1}{\psi_1(\Veta)} \\
\tfrac{1}{\psi_2(\Veta)} & \tfrac{1}{\psi_2(\Veta)}& \cdots & \tfrac{1}{\psi_2(\Veta)} \\
\vdots & \vdots & \ddots & \vdots \\
\tfrac{1}{\psi_p(\Veta)} & \tfrac{1}{\psi_p(\Veta)} & \cdots &  \tfrac{1}{\psi_p(\Veta)}
\end{pmatrix}  \right ] .
\end{aligned}
$$
%-------------------------------------------------------------------------------------------
%
 \section{Proof of Proposition \ref{prop} (details)} \label{App_B}
 The cases $k=1,2$ are trivial and we show the relations when $k\geq 3$.
 \begin{itemize}
 \item Prove that $ \sigma_{k+1}^2 + z_k^2 - \sigma_k^2 =0.$
 Substituting the expressions of $\sigma_k$ and $z_k$ we have:
$$ 
\begin{aligned}
& \sigma_{k+1}^2-\sigma_{k}^2 = \\
& = \frac{k+{\left(2p-1\right)}\,{\left(k-2\right)}-{{\left(2p-1\right)}}^2 -1}{2p-k+1}-\frac{k+{\left(2p-1\right)}\,{\left(k-1\right)}-{{\left(2p-1\right)}}^2 }{2p-k} = \\
& = -\frac{2p}{{\left(2p-k\right)}\,{\left(2p-k+1\right)}} ;
\end{aligned}
$$
and
$$
\begin{aligned}
z_k^2 
 & =  -\frac{4p^2 }{{\left(1+2p-k\right)}\,{\left(k+{\left(-1+2p\right)}\,{\left(-2+k\right)}-{{\left(-1+2p\right)}}^2 -1\right)}} \\
& = \frac{2p}{(2p-k+1)}\frac{\cancel{2p}}{\cancel{2p}(2p-k)} .
\end{aligned}
$$
Hence $\sigma_{k+1}^2 + z_k^2 - \sigma_k^2 =0$.
\item Prove that $ -\sigma_k z_k +  z_{k}^2 + z_{k +1}\sigma_{k+1} = 0.$
Substituting the expressions of $\sigma_k$ and $z_k$ we have: 
$$z_{k +1}\sigma_{k+1}=-\frac{2p}{2p-k} \quad \text{and} \quad
\sigma_k z_k = -\frac{2p}{2p-k+1}$$
hence
$$
-\sigma_k z_k + z_{k +1}\sigma_{k+1} = 
-\frac{2p}{(2p-k+1)(2p-k)}
$$
therefore $-\sigma_k z_k +  z_{k}^2 + z_{k +1}\sigma_{k+1} = 0$. 
 \end{itemize}
%-----------------------------------------------------------------------------------------------
\section*{Acknowledgments}
We would like to thank Prof. Paola Fantazzini and remember  Prof. Bob Brown for their work in introducing and applying the Uniform Penalty principle to real-world problems in NMR. Additionally, we wish to honour the memory of Prof. Daniela di Serafino for her inspiration and encouragement. Finally, we thank the anonymous reviewers for their valuable comments and suggestions.
%
%
%\bibliographystyle{siamplain}
%\bibliography{biblio_multipenalty}

\end{document}

% --- supplement: Latex_sisc_supp/main_Supp.tex ---

\maketitle

\section*{Introduction}

This document presents the supplementary materials for the article titled "Uniform Multi-Penalty Regularization for Linear Ill-Posed Inverse Problems" by Villiam Bortolotti, Germana Landi, and Fabiana Zama. The supplementary materials provide  extended discussions on one-dimensional test problems. After the description of the test problems and algorithmic settings in section \ref{sec:test}, low noise and high noise results are extensively discussed in sections \ref{sec:LN} and \ref{sec:HN} respectively. 
%the behavior of regularization algorithms under different noise scenarios, and additional computational results that support the conclusions of the main article.
%
\section{One-dimensional test problems}\label{sec:test}
Starting from a ground-truth signal $\bu^* \in \mathbb{R}^N$,  we generate the test problem  by convolving it with a blurring operator $A \in \mathbb{R}^{M\times N}$ and then corrupting the blurred signal with additive Gaussian noise of level $\delta$, i.e.
$$\bb = \by + \text{noise}$$ where
$\text{noise} = \delta \boldsymbol{\eta} \|\by\|$,
with  $\by = A \bu^*$ and $\boldsymbol{\eta} \in \mathbb{R}^M$ a random normal vector with norm one.
%We have fixed the level at $10^{-2}$, a reasonable noise level for many image and signal processing applications.
%We consider a low-noise scenario, corresponding to $\delta=0.01$, and a high-noise scenario, where $\delta=0.1$.
We consider two scenarios: a low-noise scenario corresponding to $\delta=0.01$, and a high-noise scenario where $\delta=0.1$.
We define three test problems where each signal has different and progressively more complex features representing a challenge for the regularization functions.
%
\begin{itemize}
\item[{\tt T1}] The signal $\bu^*\in \R^{100}$ and the linear operator $A\in \R^{100\times 100}$ are obtained by the  {\tt heat} function from Hansen's Regularization Tools \cite{hansen2007regularization}. The ground truth $\bu^*$, represented in Figure \ref{fig:tests} (left), is a sparse signal with mostly zero values except for a single peak quite close to the axes origin.
\item[{\tt T2}] The signal $\bu^*\in \R^{404}$ (Figure \ref{fig:tests}, center) has two narrow peaks over a flat area and a smooth rounded area. The operator $A$ represents a Gaussian blur with a standard deviation of five. 
\item[{\tt T3}] The signal $\bu^*\in \R^{504}$ (Figure \ref{fig:tests}, right) presents a smooth rounded feature, a narrow peak and a ramp. The same Gaussian blur of {\tt T2} is used.
\end{itemize}
%
%----Fig 1 
\begin{figure}[htbp]
\begin{center}
%(a) \hspace{5cm} (b) \\
\includegraphics[width=4cm]{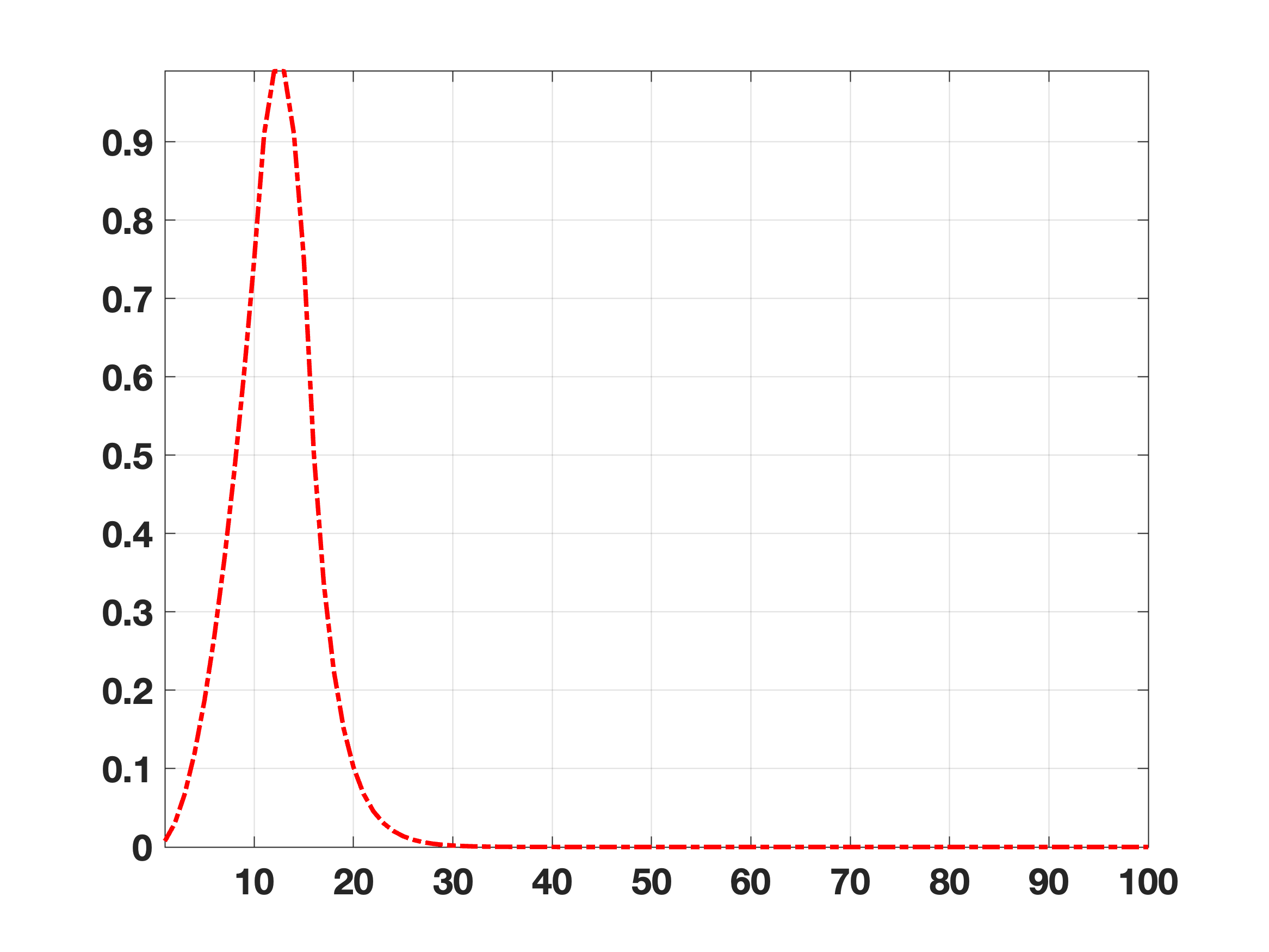} 
\includegraphics[width=4cm]{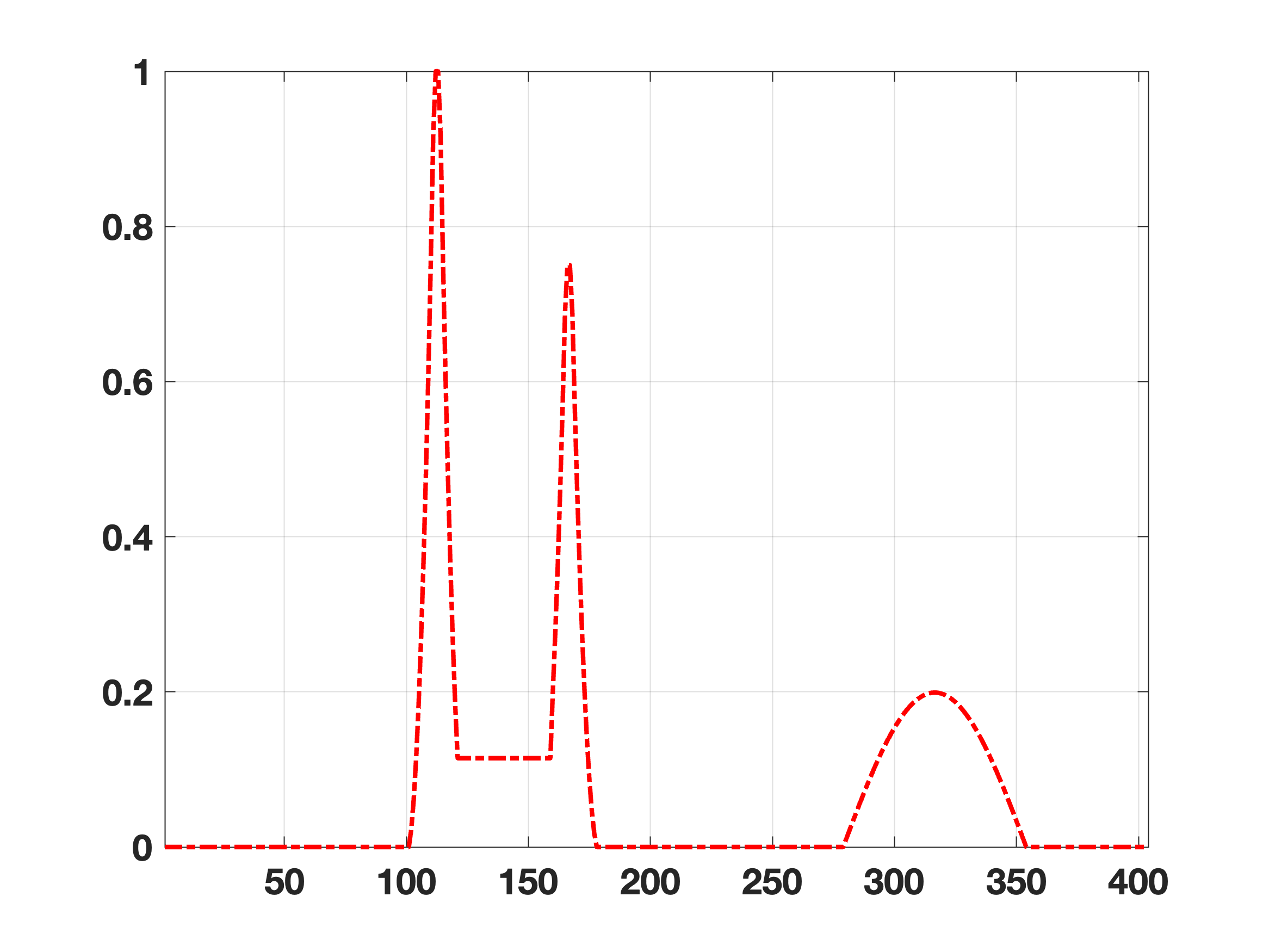}
\includegraphics[width=4cm]{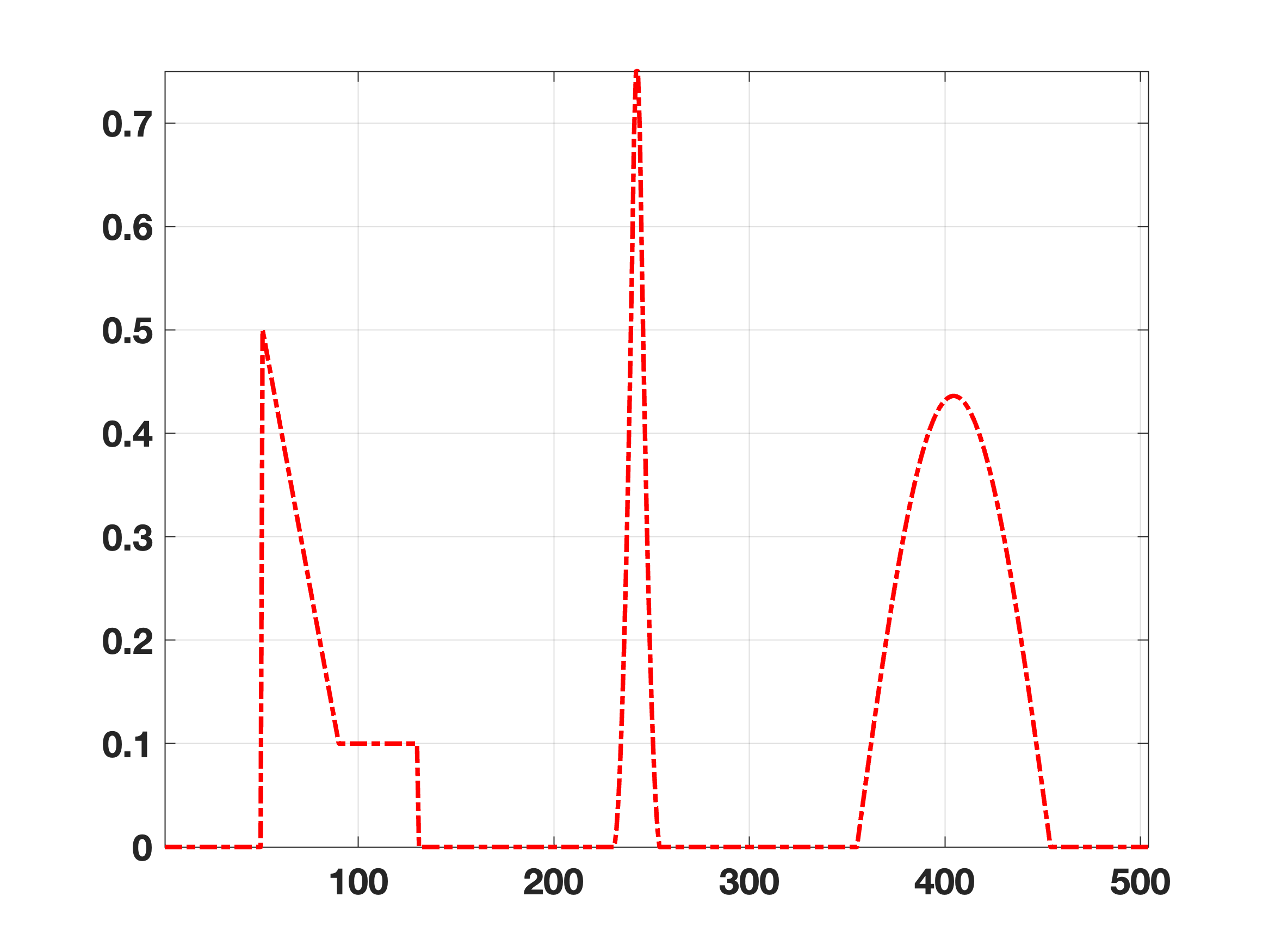}
\end{center}
\caption{1D test problems: ground truth $\bu^*$. Left {\tt T1}, center {\tt T2}, right {\tt T3}.}
 \label{fig:tests}
\end{figure}
%
%
%To show the amount of noise on the blurred data we represent in Figures \ref{fig:tests_noise} and \ref{fig:HN_tests} the blurred noisy data $\bb$ (blue line), and the blurred data $\by$ (dashed red line for $\delta=0.01$ and $\delta=0.1$ respectively).
%
For these three test problems, we observe the behaviour of Algorithm UPenMM with $p=N$  penalty functions as follows: 
\begin{equation}\label{eq:psifun}
 \psi_i(\bu)=(L\bu)_i^2+\epsilon, \quad i=1,\ldots,N   
\end{equation}
where $L\in\mathbb{R}^{N \times N}$ is the discretization of the second order derivative operator and $\epsilon$ is a small positive constant in the range $[10^{-7}, 10^{-5}] $. 
Moreover, we analyse the generalized approach proposed in Algorithm GUPenMM 
%
with $\varepsilon = 0.9$, $\psi_i$ as in \eqref{eq:psifun} and the following  generalized regularization parameter $\widetilde{\bl}$:  
\begin{equation}
    \widetilde{\lambda}_i = \frac{\|A\bu_{\bl}-\bb\|^2}{N \widetilde{\psi}_i(\bu_{\bl})},  \quad \widetilde{\psi}_i(\bu_{\bl}) = \max \; \{ {\psi}_{i-1}(\bu_{\bl}),{\psi}_i(\bu_{\bl}), {\psi}_{i+1}(\bu_{\bl})\}, \; i=1,\ldots,N.\label{eq:newpsi}
\end{equation}
%
Both algorithms are tested in the unconstrained ($\Omega=\R^N$) and non-negatively constrained ($\Omega=\R^N_+$ where $\R^N_+$ is the positive orthant) cases. 
A solution of the subproblem at step 3 of Algorithms UPenMM and GUPenMM has been computed, in the unconstrained case, by solving its first order conditions, while the Newton Projection method \cite{ber99} has been used in the constrained case.% at step 3 of algorithms UPenMM and GUPenMM. %Here  $\Omega$ is the positive .

The initial values $\bl_i^{(0)}$, $i=1,\ldots,N$, have been chosen such that
\begin{align*}
    \bl^{(0)}_i &= \displaystyle{\frac{\|A\bb-\bb\|^2}{N \psi_i(\bb)}} \quad \text{for Algorithm UPenMM}; \\
    \bl^{(0)}_i &=  \displaystyle{\frac{\|A\bb-\bb\|^2}{N \widetilde{\psi}_i(\bb)}} \quad \text{for Algorithm GUPenMM} .
\end{align*}
%
\section{Low-noise results}\label{sec:LN}
%
We begin by analyzing the behaviour of the two algorithms in the case of low noise, both in the unconstrained ($\Omega=\R^N$) and in the non-negatively constrained ($\Omega=\R^N_+$) scenarios. To show the amount of noise on the blurred data we represent in Figure \ref{fig:tests_noise} the blurred noisy data $\bb$ (blue line), and the blurred data $\by$.
% (dashed red line for $\delta=0.01$ and $\delta=0.1$ respectively).
%----Fig 2 
\begin{figure}[htbp]
\begin{center}
%(a) \hspace{5cm} (b) \\
\includegraphics[width=4cm]{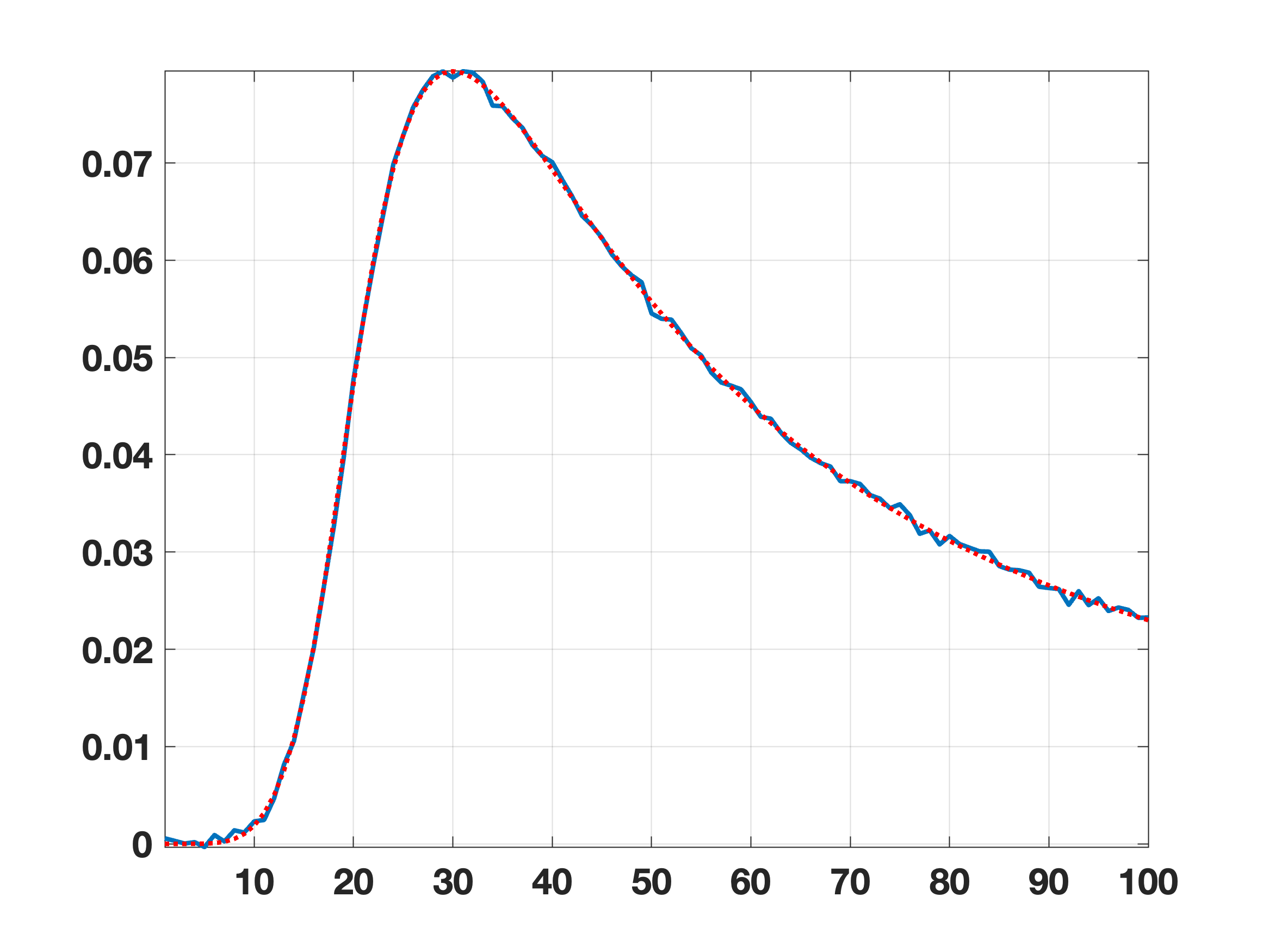} 
\includegraphics[width=4cm]{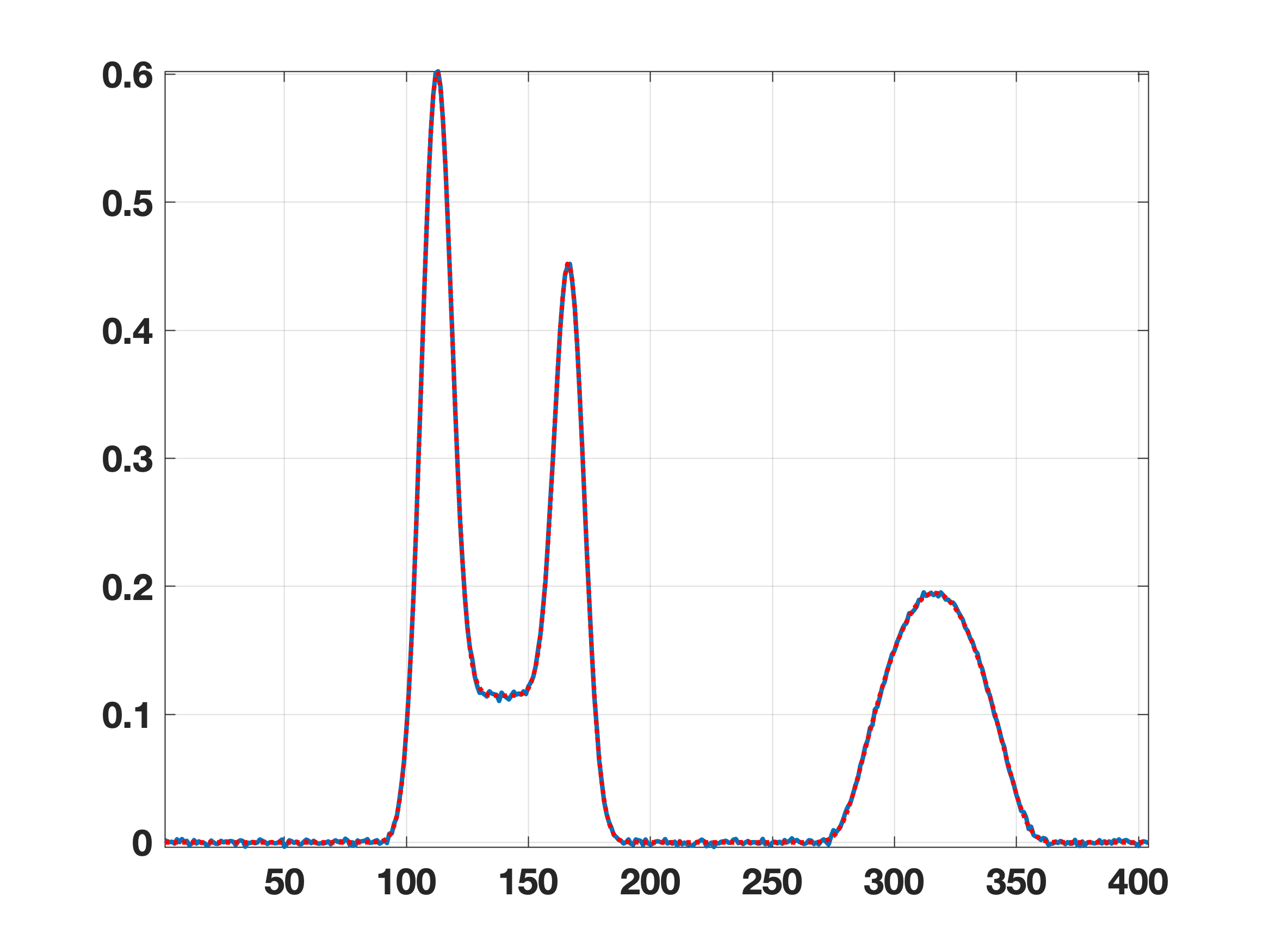} 
\includegraphics[width=4cm]{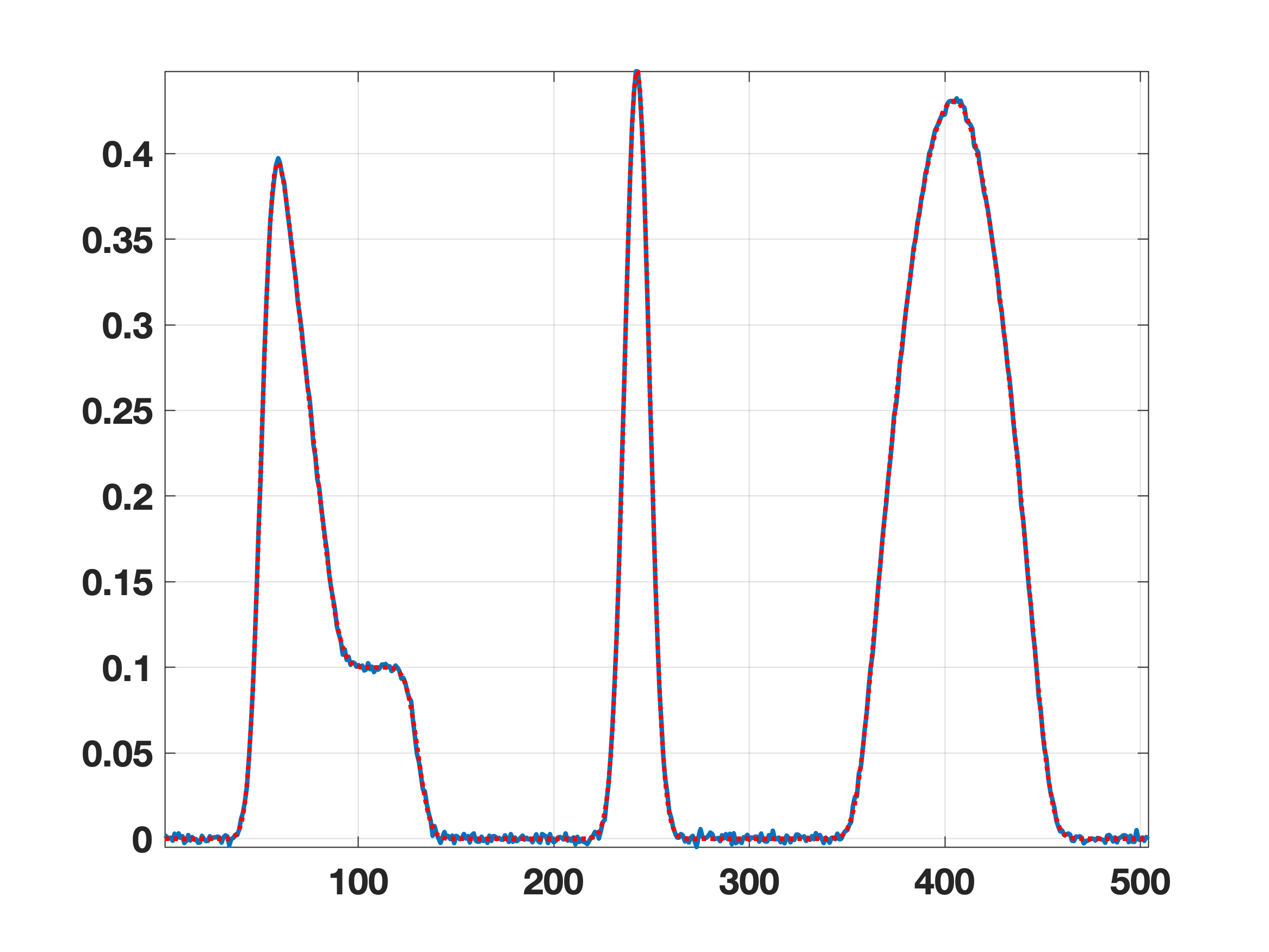} 
\end{center}
\caption{1D test problems ($\delta = 0.01$). Blue line: blurred noisy data $\bb$, dotted red line: blurred data $\by$. Left {\tt T1}, center {\tt T2}, right {\tt T3}.}
 \label{fig:tests_noise}
\end{figure}

%
A first evaluation of the methods is made by inspecting the error history represented in Figure \ref{fig:T1_errs}(a) when $\Omega=\R^N$ and in Figure \ref{fig:T1_errs}(b) for $\Omega=\R^N_+$. Each picture shows the relative error curves of Algorithm UPenMM (black dash-dot line)   and Algorithm GUPenMM (blue dashed line). The red dots represent the relative error values corresponding to  the following {\tt stopping criterion}:
\begin{equation}\label{eq:SC}
\| \Veta^{(k+1)}-\Veta^{(k)} \| \leq \| \Veta^{(k)} \| Tol_{\lambda} % cambiato FZ 10/07/24
\end{equation}
with $Tol_{\lambda}=10^{-2}.$
%
%----Fig 2
\begin{figure}[htbp]
\begin{center}
(a) \hspace{5cm} (b) \\
   \includegraphics[width=6cm]{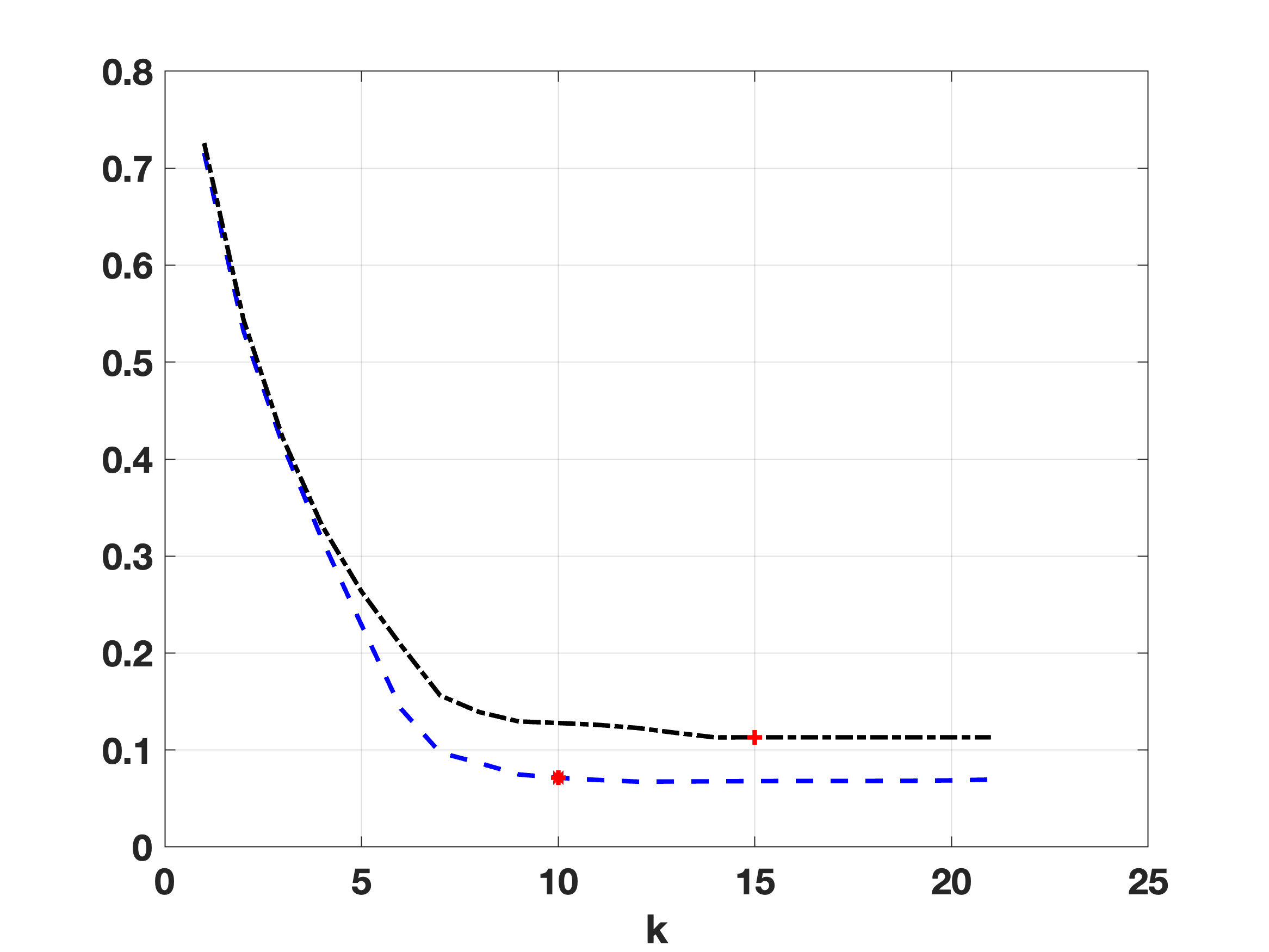}\hspace{-.6cm}
   \includegraphics[width=6cm]{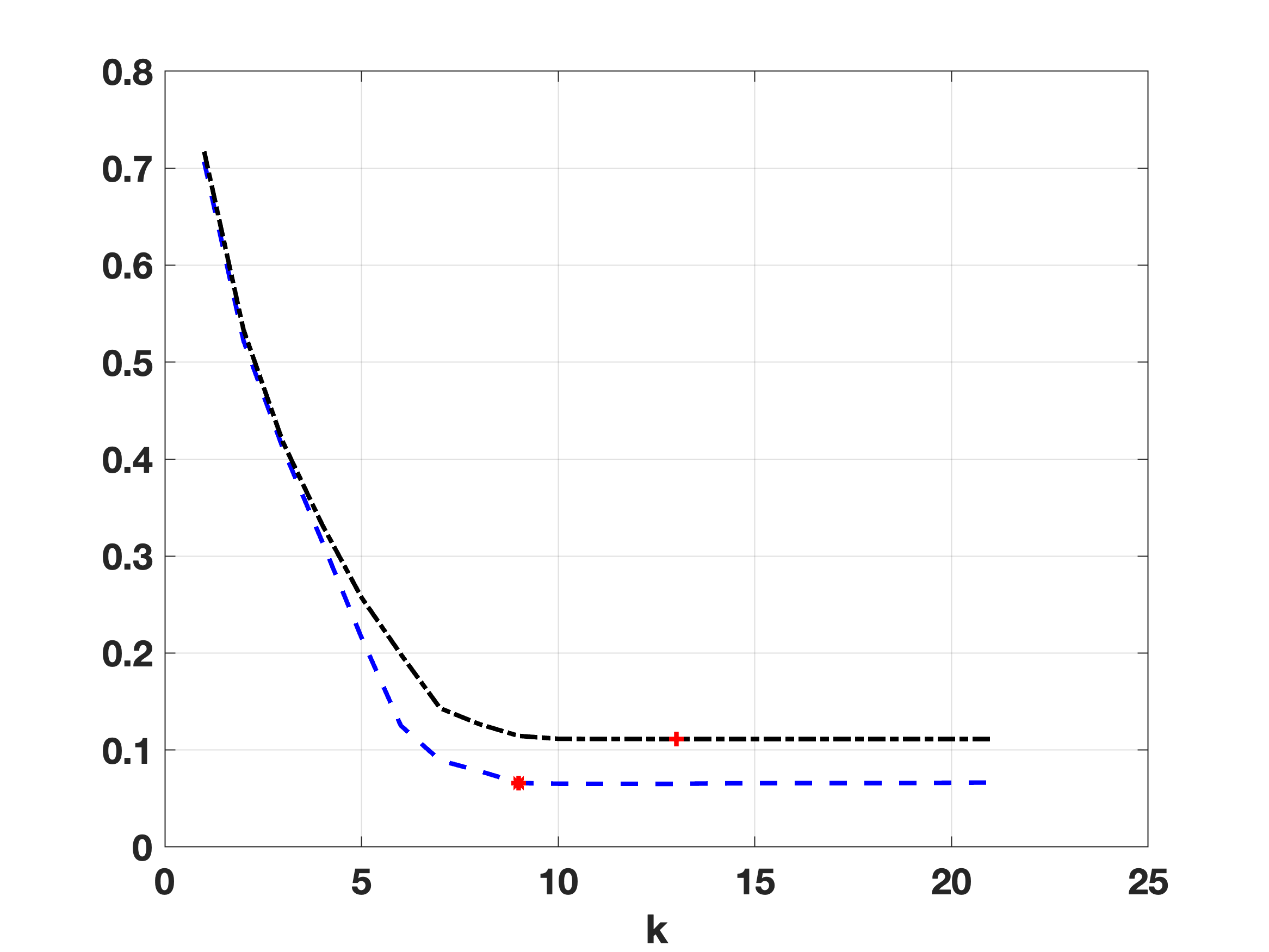}
\end{center}
\caption{Test problem {\tt T1} ($\delta = 0.01$). Relative error curves vs. the iteration index $k$.  (a) Case $\Omega=\R^N$. (b) Case $\Omega=\R^N_+$. The black dash-dot line represents the relative error values of Algorithm UPenMM and the blue dashed line represents the relative error values of Algorithm GUPenMM. The red dots indicate the relative error values corresponding to stopping criterion \eqref{eq:SC}.}
 \label{fig:T1_errs}
\end{figure}
%
We can appreciate the positive influence of the modified evaluation of the regularization parameters introduced in Algorithm GUPenMM producing smaller relative errors and faster convergence.
%
The plot of the values of the surrogate function is shown in Figure \ref{fig:T1_Q}, demonstrating that the convergence criterion is met.
%----Fig 3
\begin{figure}[htbp]
\begin{center}
(a) \hspace{5cm} (b) \\
   \includegraphics[width=6cm]{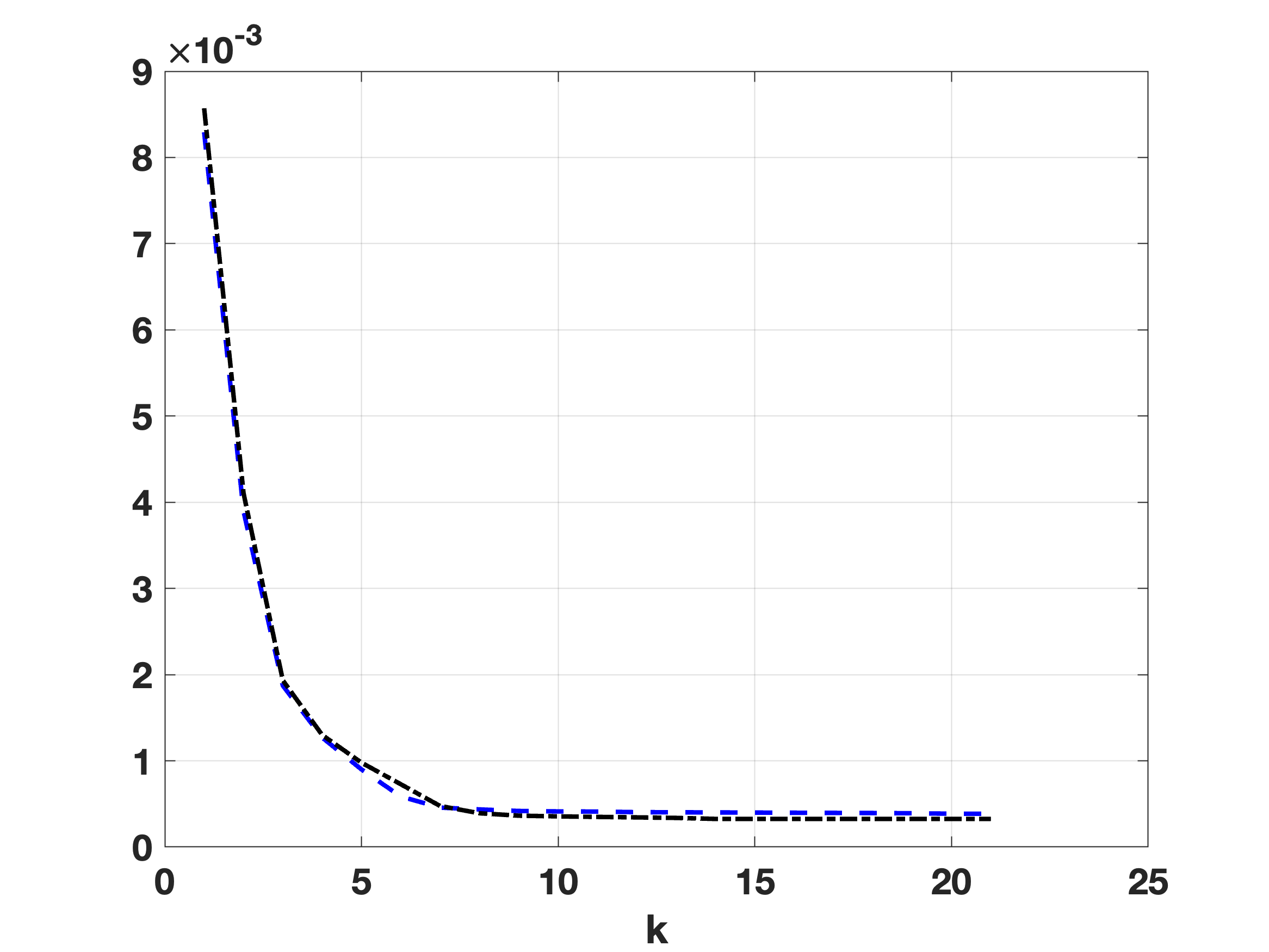}\hspace{-.6cm}
   \includegraphics[width=6cm]{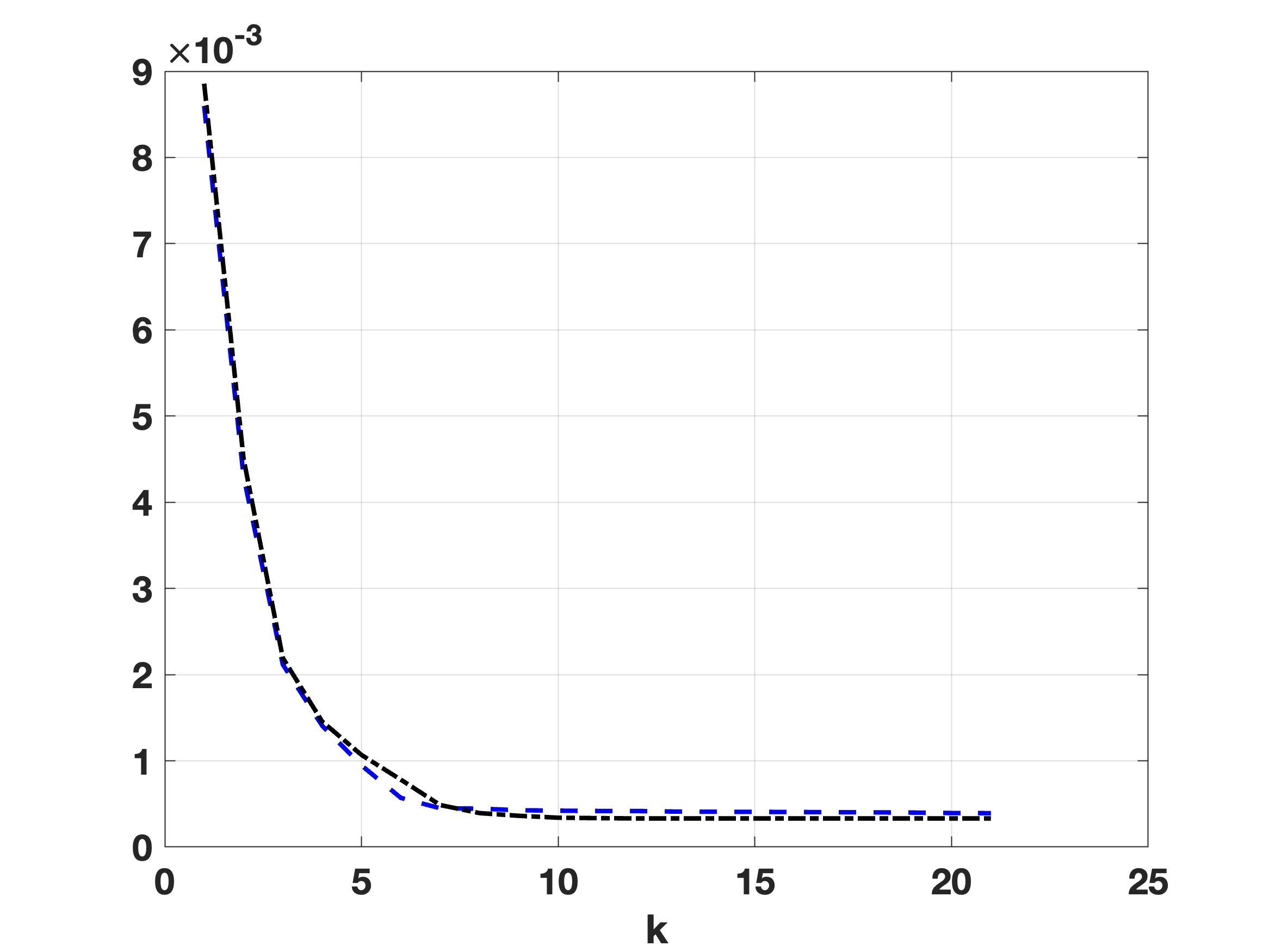}
\end{center}
\caption{Test problem {\tt T1} ($\delta = 0.01$). Value of the surrogate function. (a) $\Omega=\R^N$ (b) $\Omega=\R^N_+$. The black dash-dot line represents the $Q(\widehat{\bl}^{(k+1)},\bl^{(k)})$ values in  Algorithm UPenMM and the blue dashed line represents the $Q(\widetilde{\bl}^{(k+1)},\bl^{(k)})$ values in Algorithm GUPenMM.}
 \label{fig:T1_Q}
\end{figure}
%
The good agreement of the residual curves with the noise norm is observed in Figure \ref{fig:T1_res} where the black dash-dot line represents the residual norm values in Algorithm UPenMM and the blue dashed line indicates the residual norm values in Algorithm GUPenMM. The red line represents the noise norm.
% --- Fig4.
\begin{figure}[htbp]
\begin{center}
(a) \hspace{5cm} (b) \\
\includegraphics[width=6cm]{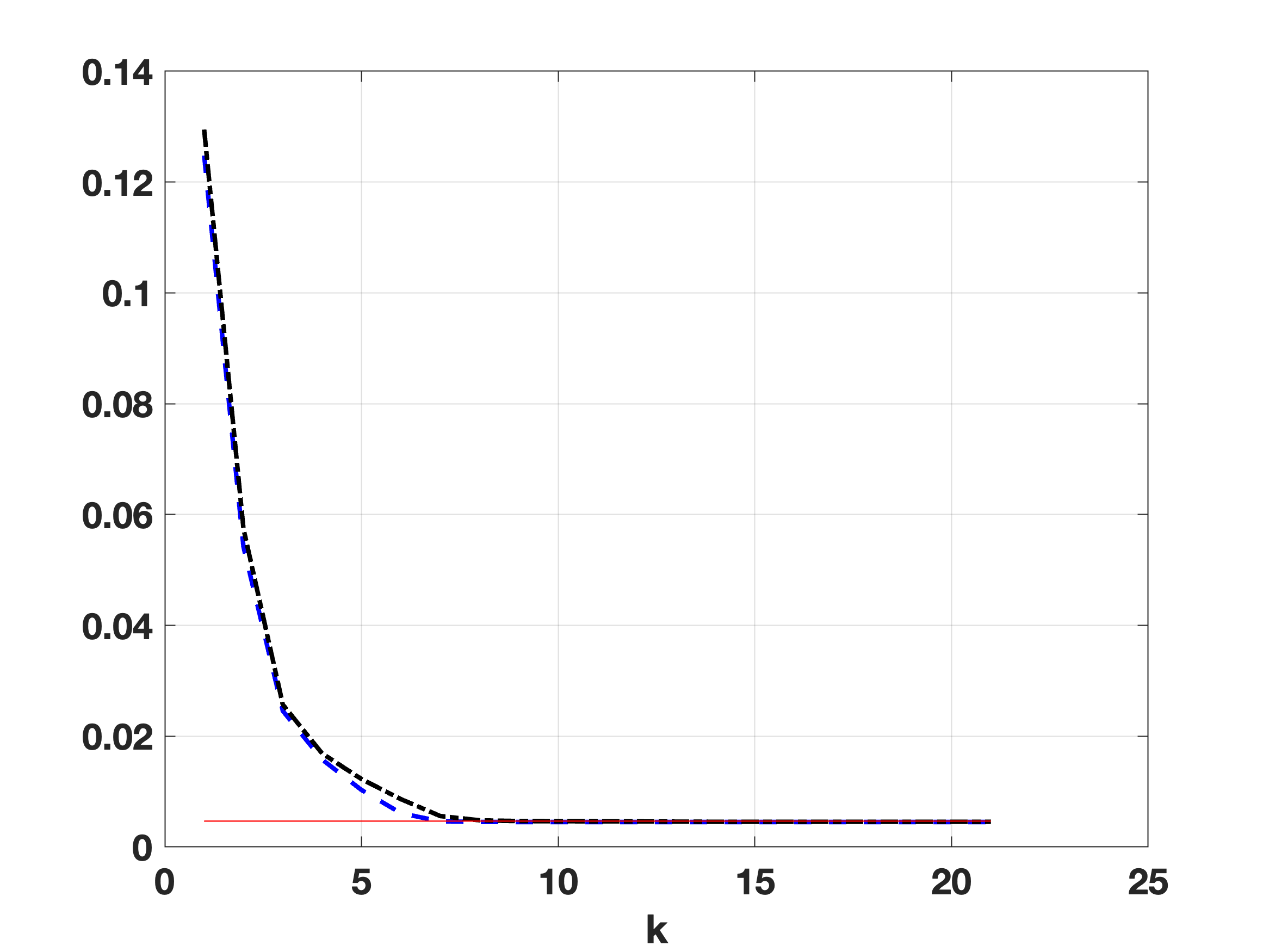}\hspace{-.6cm}
   \includegraphics[width=6cm]{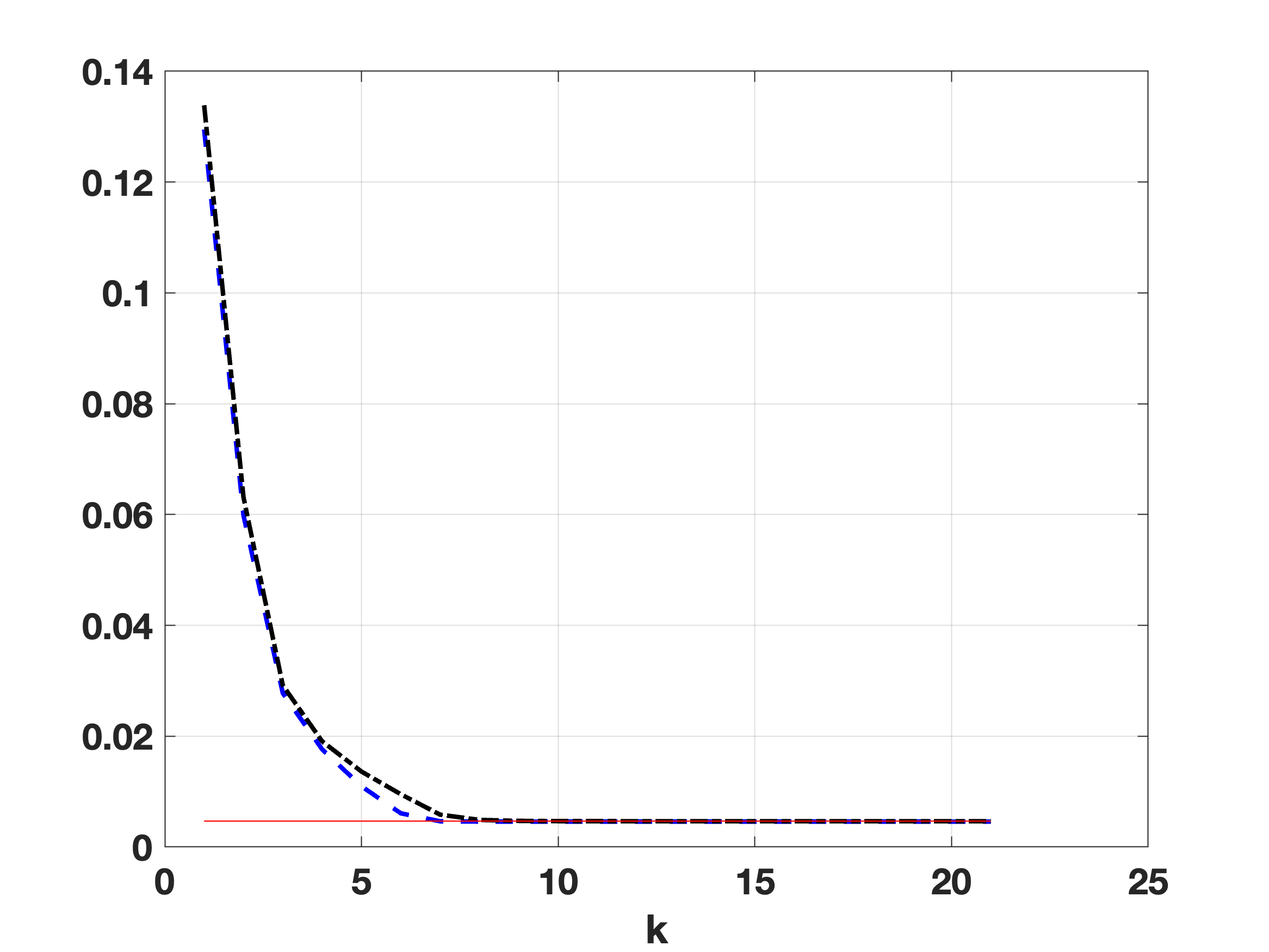}
\end{center}
\caption{Test problem {\tt T1} ($\delta = 0.01$). Behaviour of the residual norm. (a) $\Omega=\R^N$ (b) $\Omega=\R^N_+$. The black dash-dot line represents the residual norms in Algorithm UPenMM and the blue dashed line is the residual norms in Algorithm GUPenMM. The red line
represents the noise norm.}
\label{fig:T1_res}
\end{figure}
%
In addition to the error curves, the values in Table \ref{tab:compar35_minerr} and the plots of the reconstructed signals in Figure \ref{fig:T1_sol} confirm the superior quality obtained by Algorithm GUPenMM. 
%% --- Fig5.
\begin{figure}[htbp]
\begin{center}
(a) \hspace{5cm} (b) \\
\includegraphics[width=6cm]{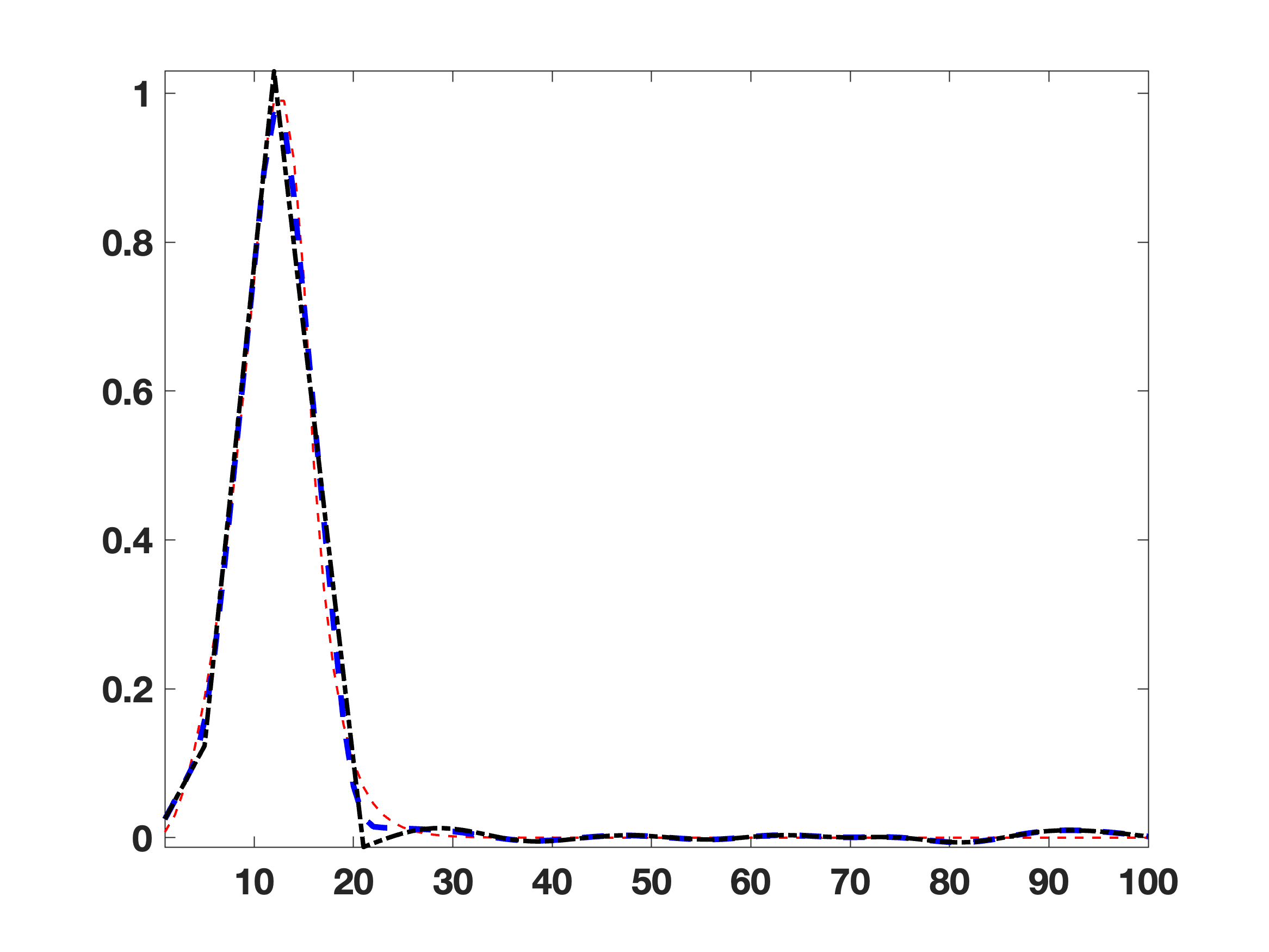}\hspace{-.6cm}
\includegraphics[width=6cm]{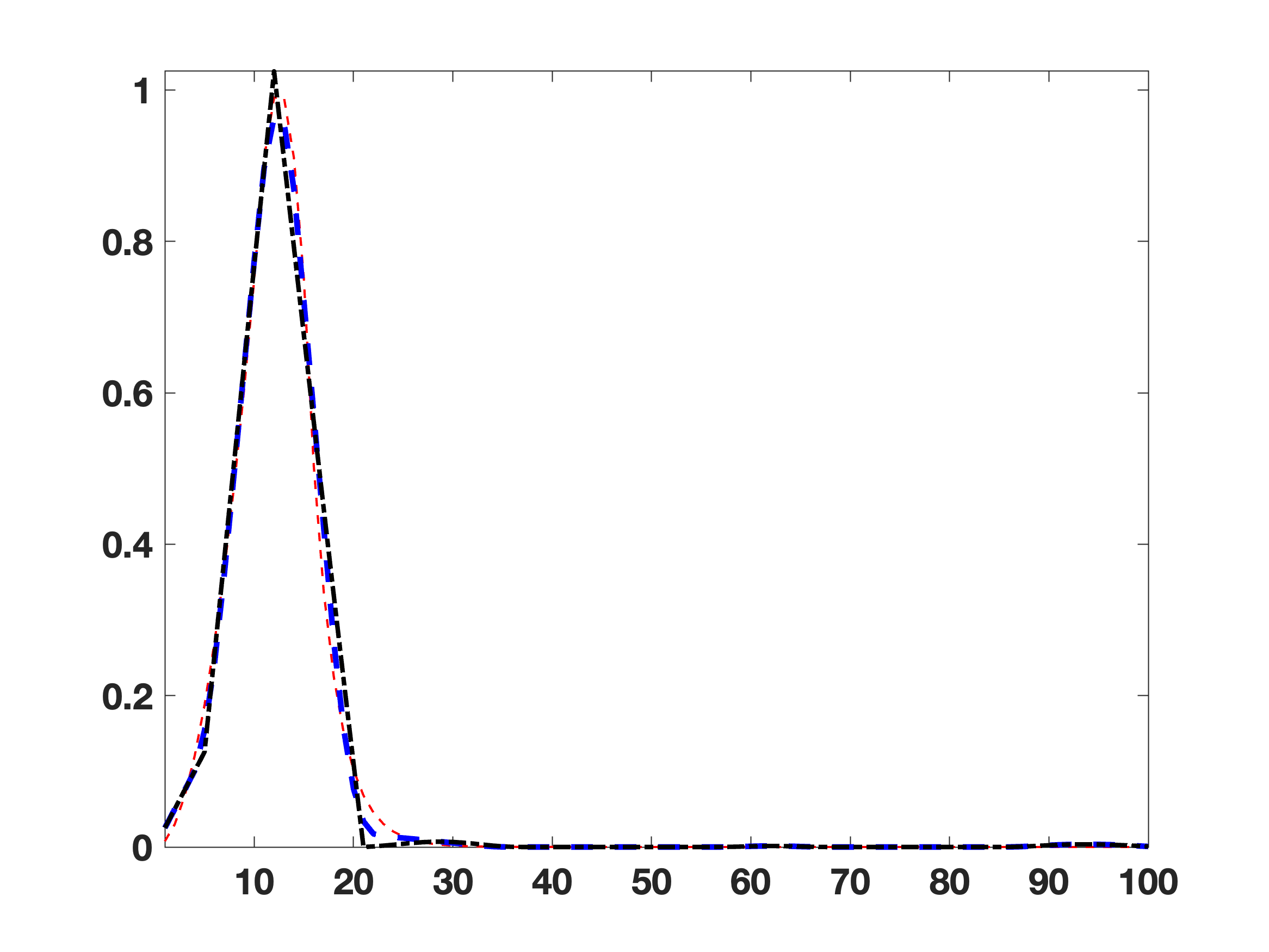}
\end{center}
\caption{Test problem {\tt T1} ($\delta = 0.01$). Solutions $\bu$. (a) $\Omega= \R^N$ (b) $\Omega= \R^N_+$. The black dash-dot line represents  $\bu$ computed by Algorithm UPenMM and  the blue dashed line represents  $\bu$ computed by Algorithm GUPenMM. The red dashed line represents the ground-truth solution.}
\label{fig:T1_sol}
\end{figure}
%
This feature is even more evident in the case of more complex signals such as those represented by test problems T2 and T3; in these cases, we only represent the reconstructed signals for $\Omega= \R^N_+$ in Figure \ref{fig:T2T3_sol}.
%--Fig.6(a)
\begin{figure}[htbp]
\begin{center}
(a) \hspace{5cm} (b) \\
\includegraphics[width=6cm]{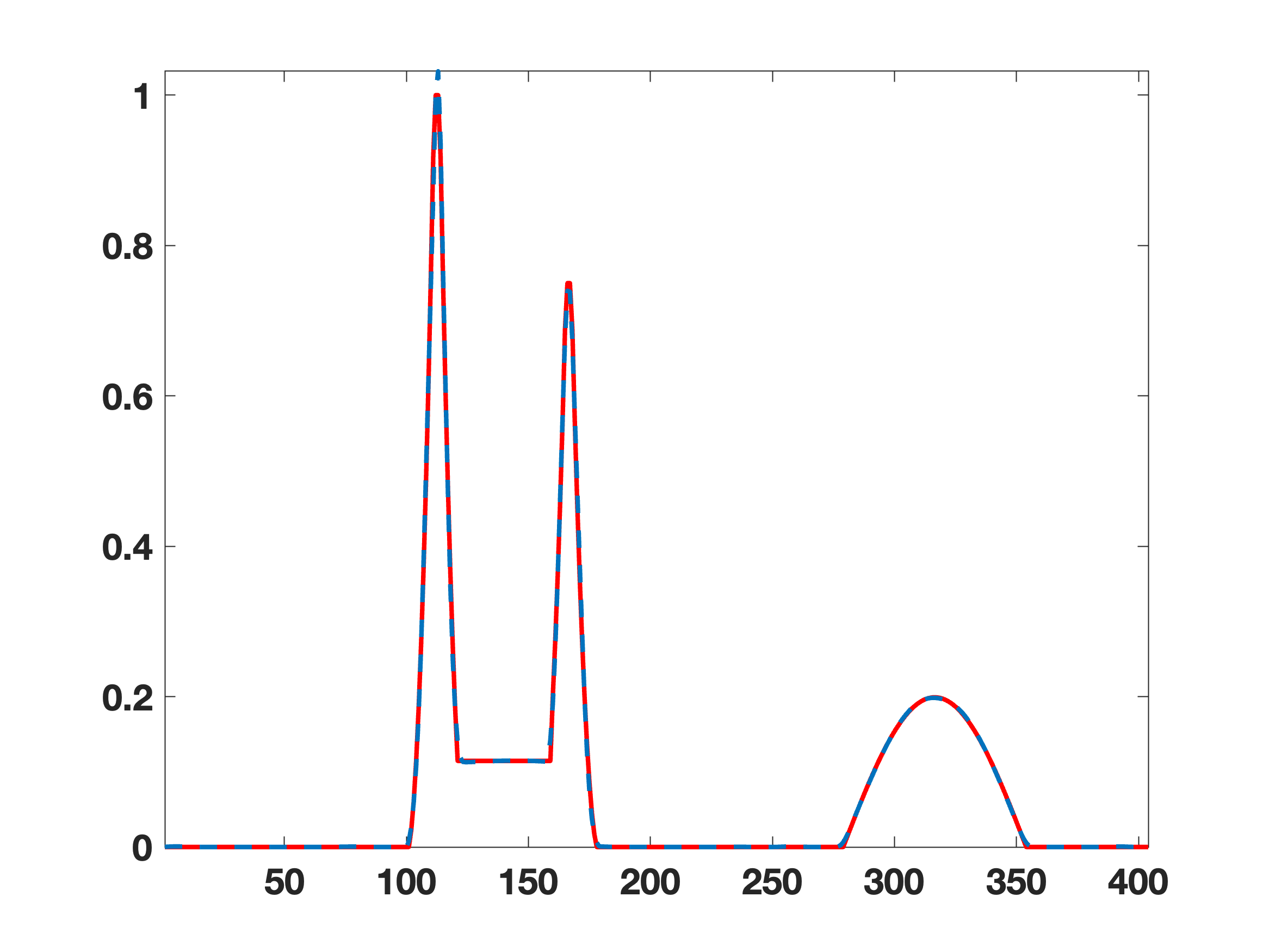}\hspace{-.5cm}
   \includegraphics[width=6cm]{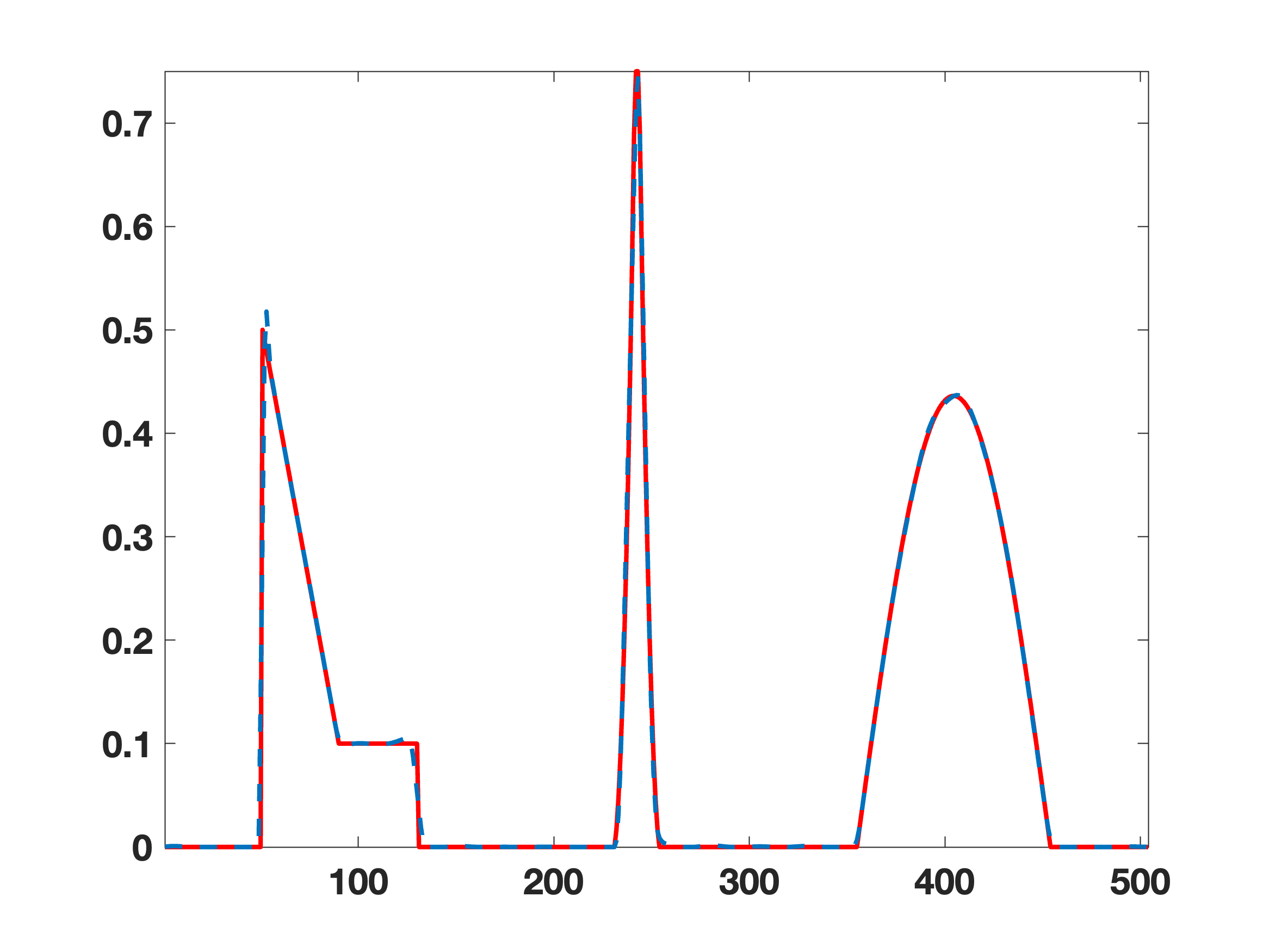}
\end{center}
\caption{Solutions $\bu$ by Algorithm GUPenMM + (blue dashed line) and ground-truth solution (red dashed line). (a) Test problem {\tt T2} ($\delta = 0.01$) (b) Test problem {\tt T3} ($\delta = 0.01$).}
\label{fig:T2T3_sol}
\end{figure}
%
%
\begin{table}[h!]
{\small
\begin{center}
\begin{tabular}{|l|c|l|c|l|c|l|}
\hline
 \multirow{ 2}{*}{Method}   & \multicolumn{2}{|c|}{\texttt{T1}} & \multicolumn{2}{|c|}{\texttt{T2}} & \multicolumn{2}{|c|}{\texttt{T3}}\\
 \cline{2-7}
              & Rel Err  &   Iters          & Rel Err  &   Iters           & Rel Err  &   Iters  \\
\hline
 UPenMM   &  1.1313$\cdot 10^{-1}$  &  15     &  9.8773$\cdot 10^{-2}$  &  7      &  1.0532$\cdot 10^{-1}$  & 9       \\
 UPenMM + &  1.1136$\cdot 10^{-1}$  &  13(6) &  8.6412$\cdot 10^{-2}$  &  6 (42) &  1.0188$\cdot 10^{-1}$  & 11(47) \\
 GUPenMM  &  7.1381$\cdot 10^{-2}$  &  10     &  2.5204$\cdot 10^{-2}$  &  7      &  6.2736$\cdot 10^{-2}$  & 18      \\
 GUPenMM +&  6.5993$\cdot 10^{-2}$  &  9(48) &  2.7592$\cdot 10^{-2}$  &  9(66) &  6.1298$\cdot 10^{-2}$  & 19(65) \\
 L2       &  1.1527$\cdot 10^{-1}$  &  /      &  7.9064$\cdot 10^{-2}$  &  /      &  1.0380$\cdot 10^{-1}$  & / \\
 L2  +    &  7.9265$\cdot 10^{-2}$  &  /      &  6.1054$\cdot 10^{-2}$  &  /      &  8.9234$\cdot 10^{-2}$  & / \\
\hline
\end{tabular}
\vspace*{3mm}
\caption{1D test problems ($\delta = 0.01$). Relative error and the corresponding number of iterations with $Tol_{\lambda}=10^{-2}$. The symbol $+$ indicates the constrained case. Between parenthesis, the number of Newton Projection iterations is reported. \label{tab:compar35_minerr}}
\end{center}
}
\end{table}
%
%
%\section{Computation times} 
The computation times presented in Table~\ref{tab:comTimes} show the performance of the different methods. The computation times for UPenMM and GUPenMM indicate a relatively stable increase across the test scenarios from T1 to T3. This increment is expected as the size of the test problems increases from T1 to T3.
%
When the non-negativity constraint is present (``+'' versions), computation times for T2 and T3 increase in both UPenMM+ and GUPenMM+.
%
 Despite this increase, Table~\ref{tab:compar35_minerr} shows that the accuracy generally improves.
  This suggests that the non-negativity constraint, while increasing computation time, enhance the solution's precision.
%
\begin{table}[h!]
{\small
\begin{center}
\begin{tabular}{|l|ccc|}
\hline
 \multirow{ 2}{*}{Method}   & \multicolumn{3}{|c|}{Computation times}\\
 \cline{2-4}
              & T1  & T2  & T3  \\
\hline
 UPenMM   & 3.01$\cdot 10^{-2}$  & 5.53$\cdot 10^{-2}$ & 1.09$\cdot 10^{-1}$ \\
 UPenMM + & 2.70$\cdot 10^{-2}$  & 3.00$\cdot 10^{0}\;\;$ & 2.75$\cdot 10^{0}\;\;$ \\
 GUPenMM  & 2.75$\cdot 10^{-2}$  & 1.66$\cdot 10^{-1}$ & 3.06$\cdot 10^{-1}$  \\
 GUPenMM +& 2.78$\cdot 10^{-2}$ & 3.00$\cdot 10^{0}\;\;$ &  4.86$\cdot 10^{0}\;\;$\\
\hline
\end{tabular}
\vspace*{3mm}
\caption{1D test problems ($\delta = 0.01$). Computation times in seconds.  The symbol $+$ indicates the constrained case. \label{tab:comTimes}}
\end{center}
}
\end{table}
%
%
A final consideration concerns the values of the point-wise regularization parameters. Figure \ref{fig:regpar} illustrates the characteristic of this method: since all the penalty terms are constant, then the  values of the regularization parameters are larger in correspondence with flat areas and become smaller where the solution exhibits rapid changes.
%--Fig7
\begin{figure}[htbp]
\begin{center}
%(a) \hspace{5cm} (b) \\
\includegraphics[width=4cm]{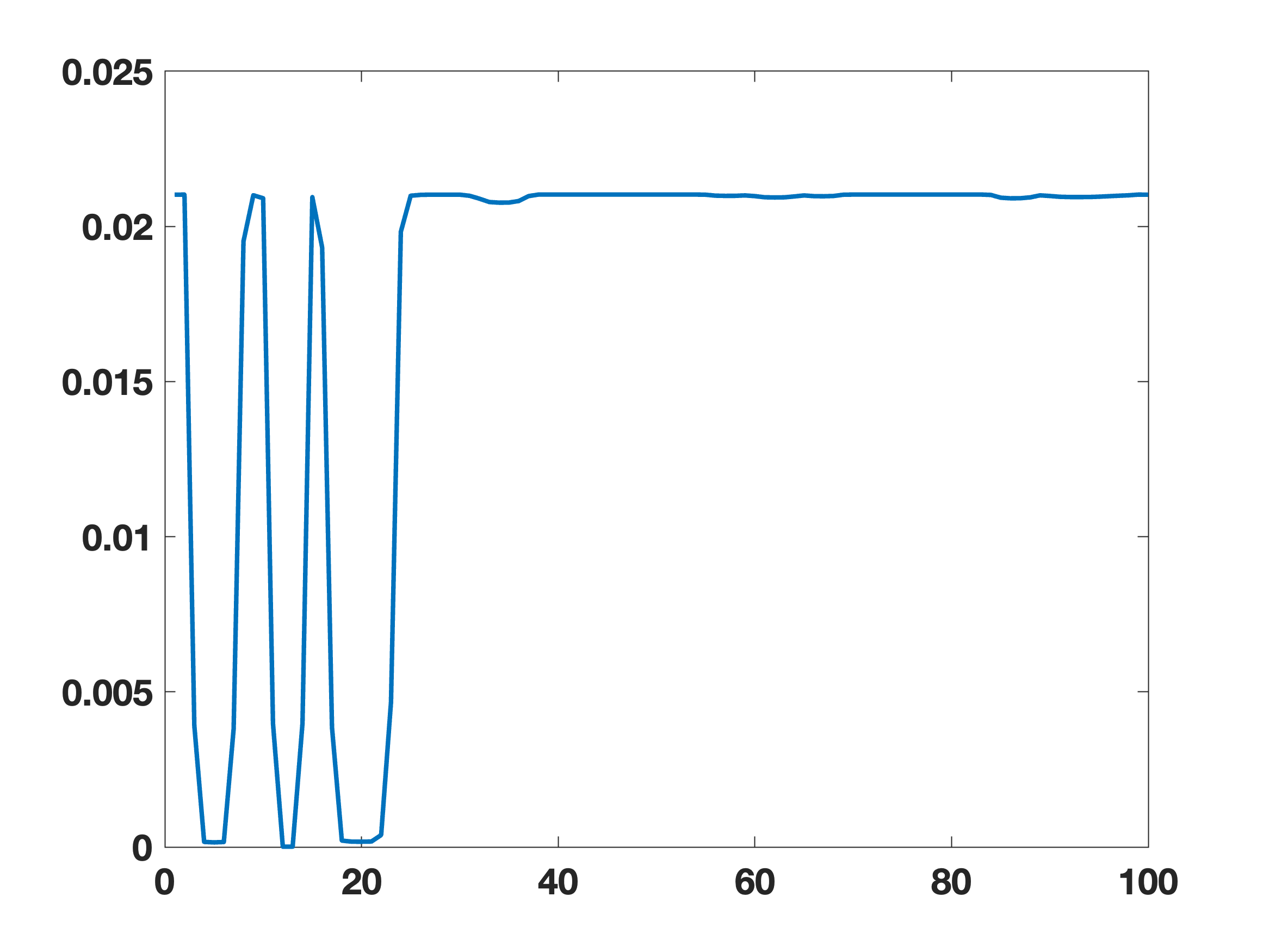}
\includegraphics[width=4cm]{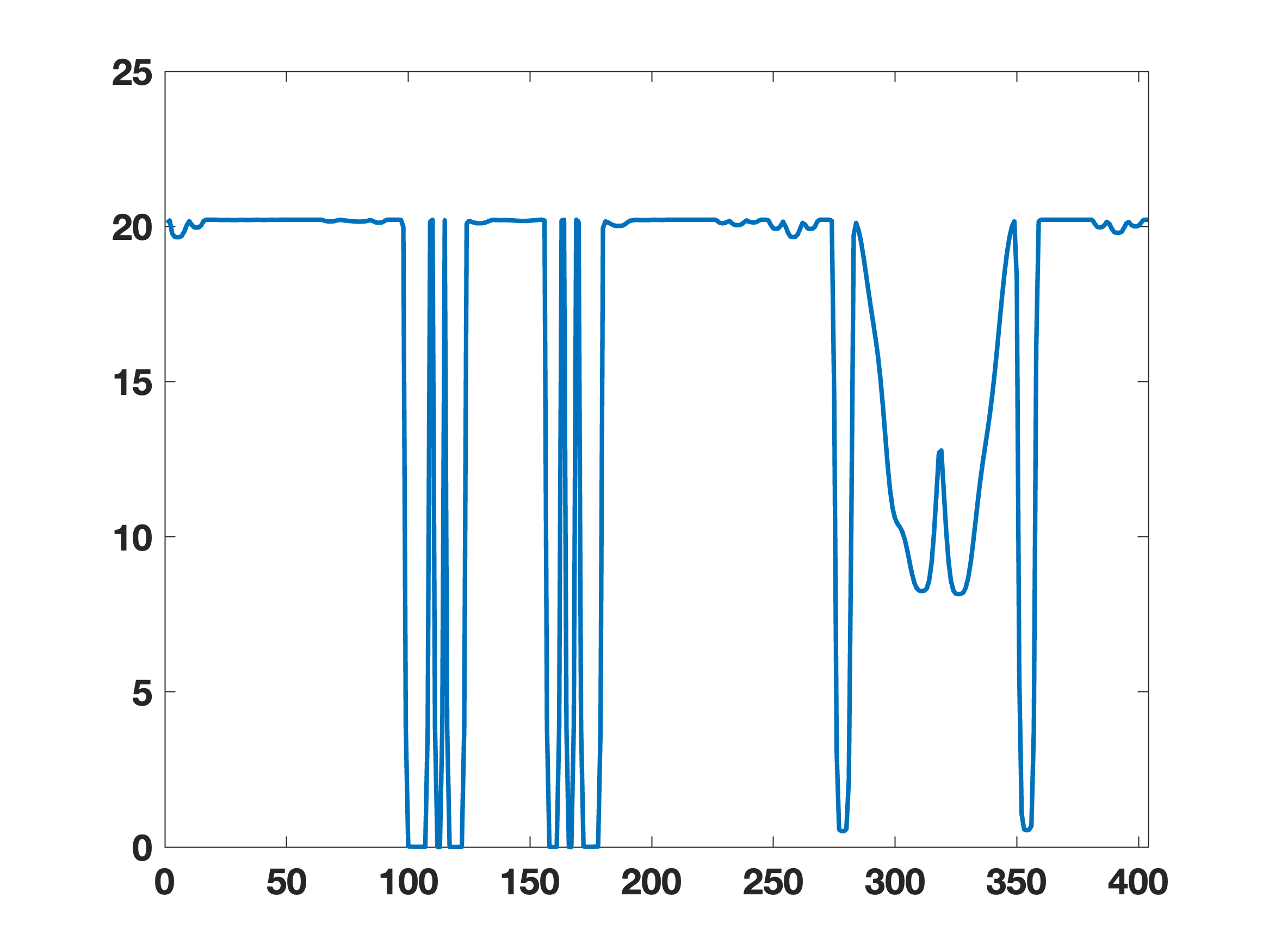}
\includegraphics[width=4cm]{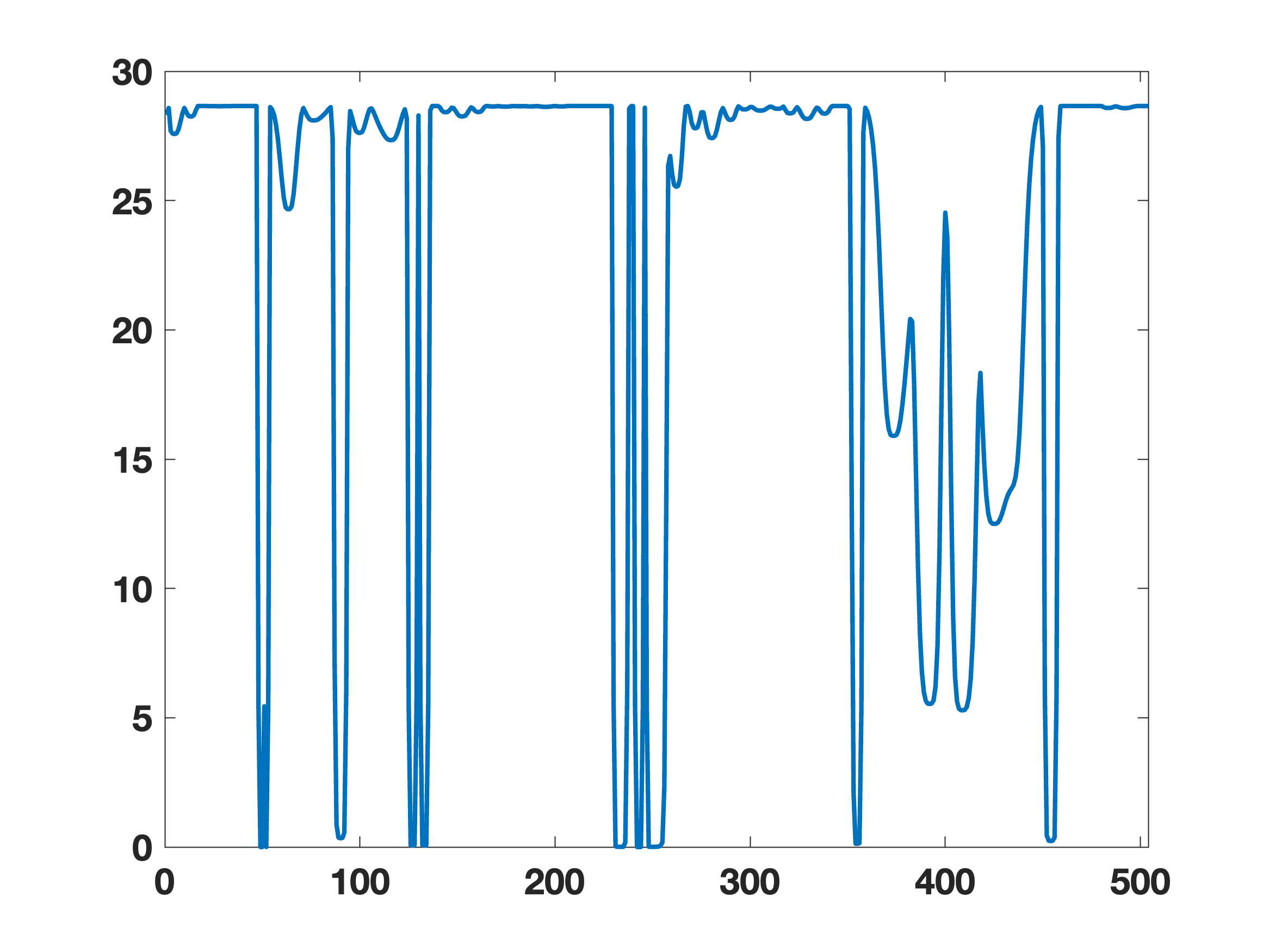}
\end{center}
\caption{1D test problems ($\delta = 0.01$). Regularization parameters computed by G\name{}. Left {\tt T1}, center {\tt T2}, right {\tt T3}.}
 \label{fig:regpar}
\end{figure}
%
%
The last two rows of Table \ref{tab:compar35_minerr} compare the relative errors when utilizing the Tikhonov method with optimal regularization parameter.  Figure \ref{fig:tikh} illustrates that a single parameter, even if optimally weighted, is unable to reconstruct equally well all the details of complex signals presenting many different features (e.g. {\tt T2}, {\tt T3}).
%---Fig. 8
\begin{figure}[htbp]
\begin{center}
\includegraphics[width=4cm]{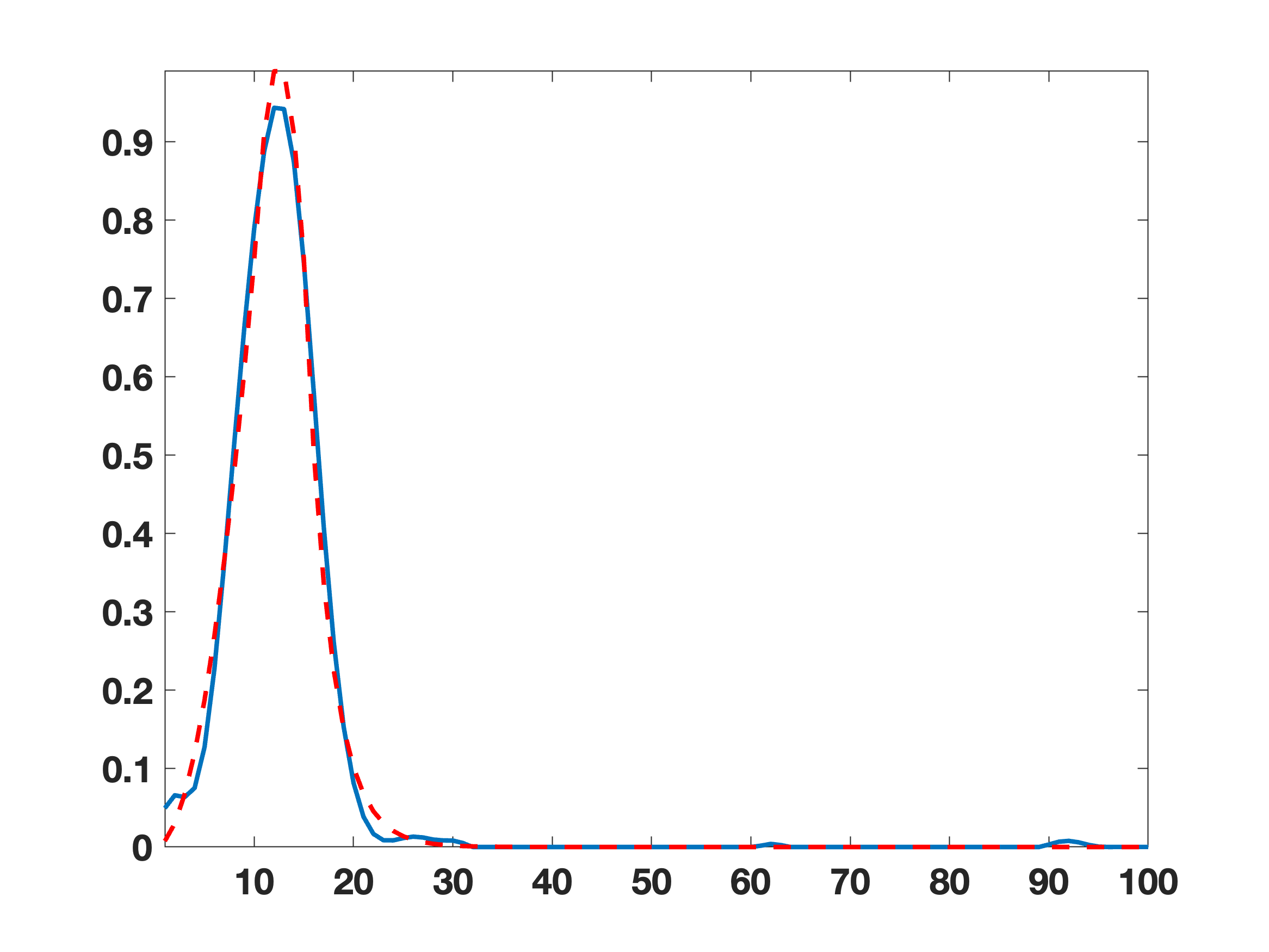}
\includegraphics[width=4cm]{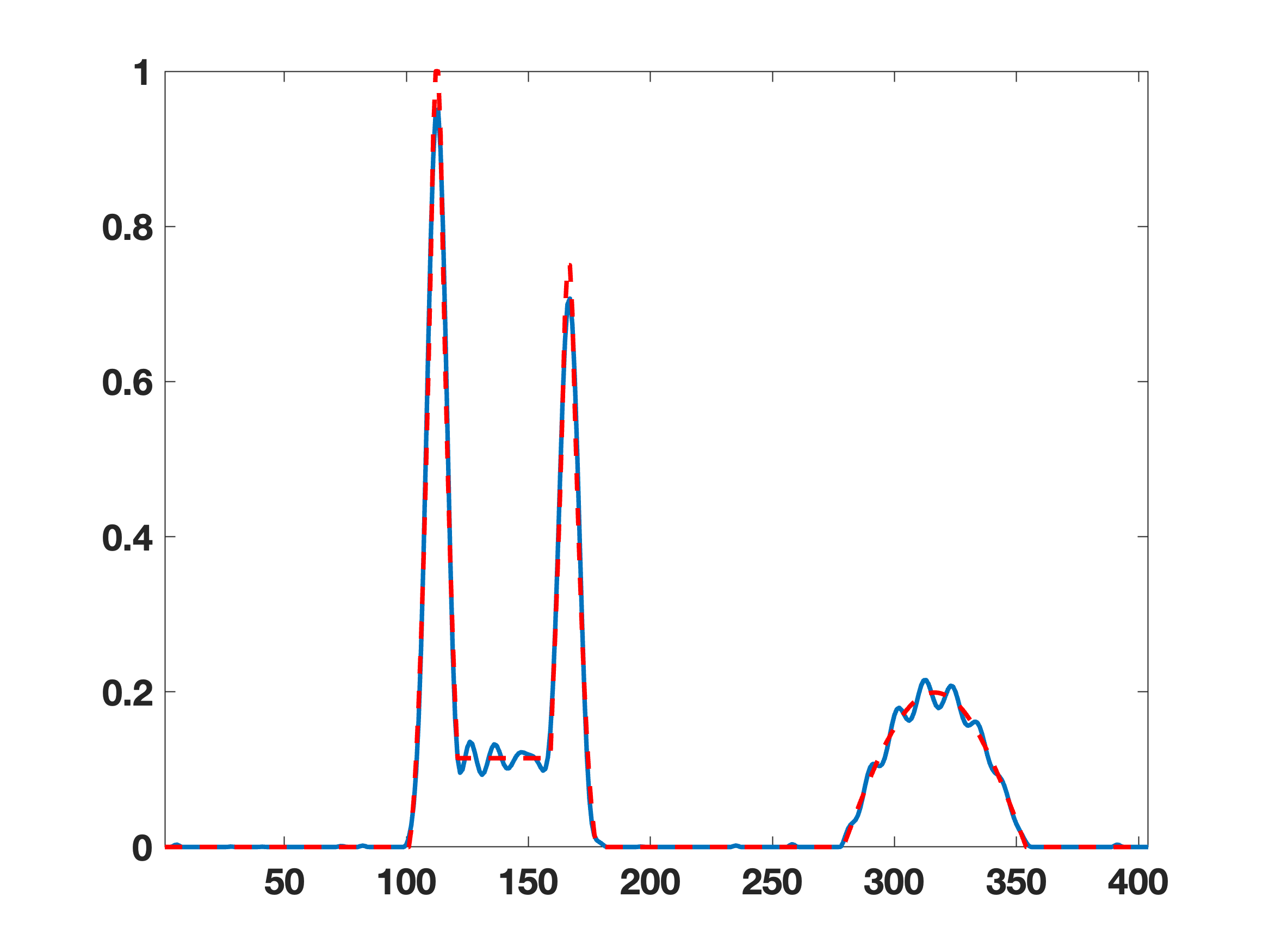}
\includegraphics[width=4cm]{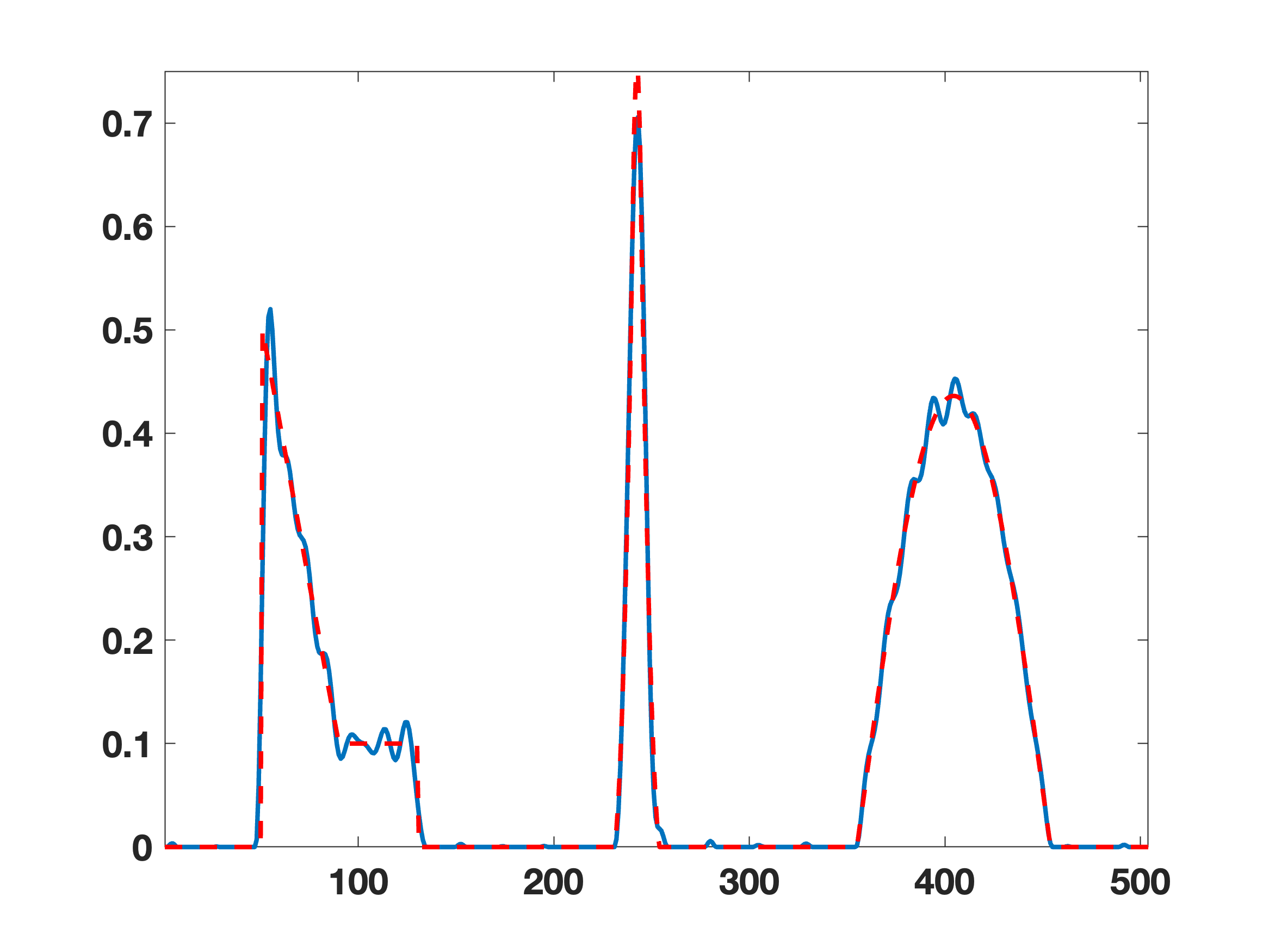}
\end{center}
\caption{1D test problems ($\delta = 0.01$). Method  $L2+$ with optimal regularization parameter.
Left {\tt T1}, $\lambda=2.7838 \times 10^{-5}$. Center {\tt T2}, $\lambda=3.495495 \times 10^{-3}$.  Right {\tt T3}, $\lambda=3.242424 \times 10^{-3}$.}
\label{fig:tikh}
\end{figure}
%
%
\section{High-noise results}\label{sec:HN}
In this section, we present the results obtained with high-noise ($\delta=0.1$), restricted to the case $\Omega=\R^N_+$, because the absence of the non-negativity constraint produces worse results. 
Figure \ref{fig:HN_tests} shows the blurred noisy data $\bb$ (blue line), and the blurred data $\by$.
%------------------- FIg 3
%
\begin{figure}[htbp]
\begin{center}
%(a) \hspace{5cm} (b) \\
\includegraphics[width=4cm]{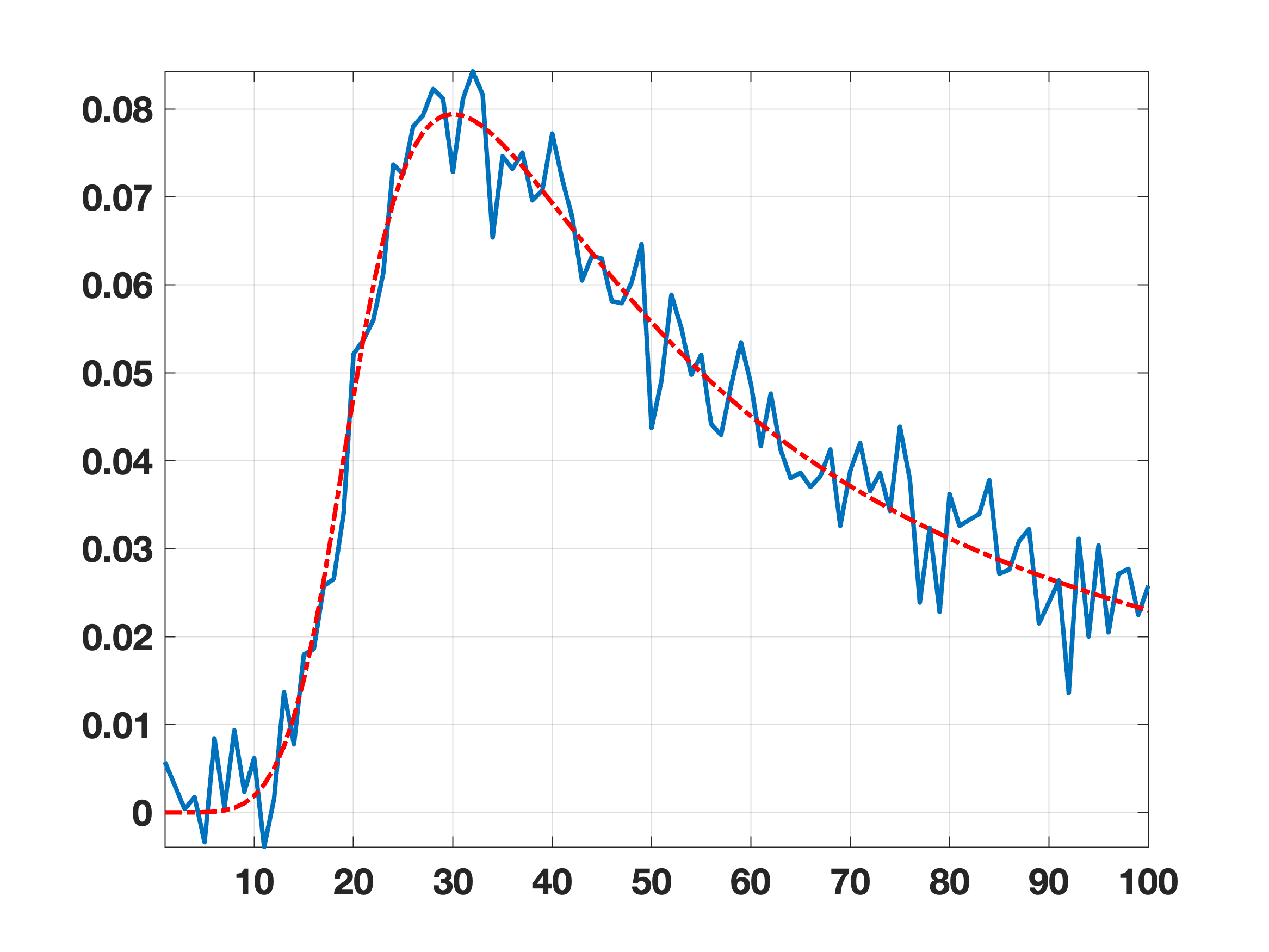}
\includegraphics[width=4cm]{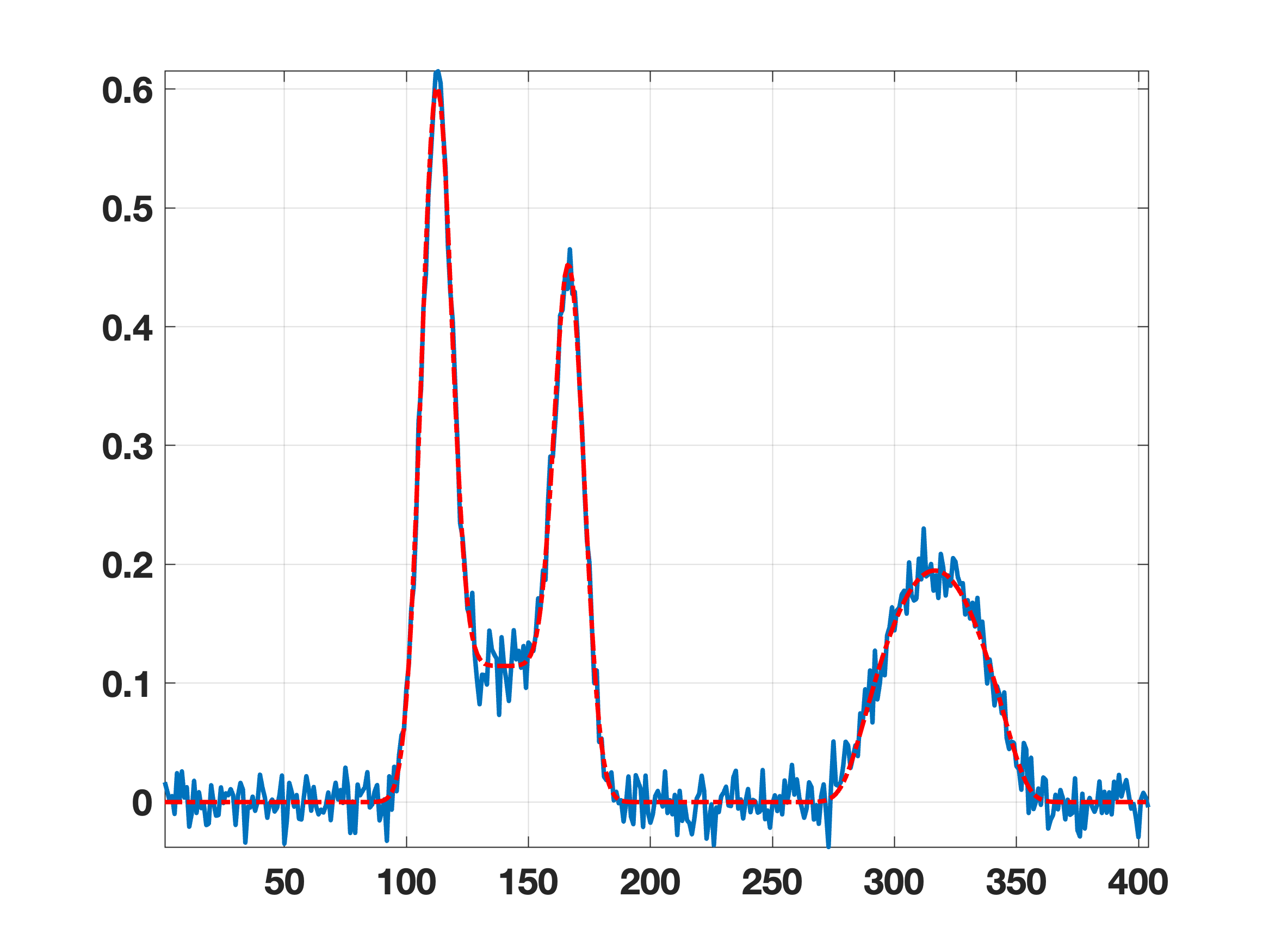}
\includegraphics[width=4cm]{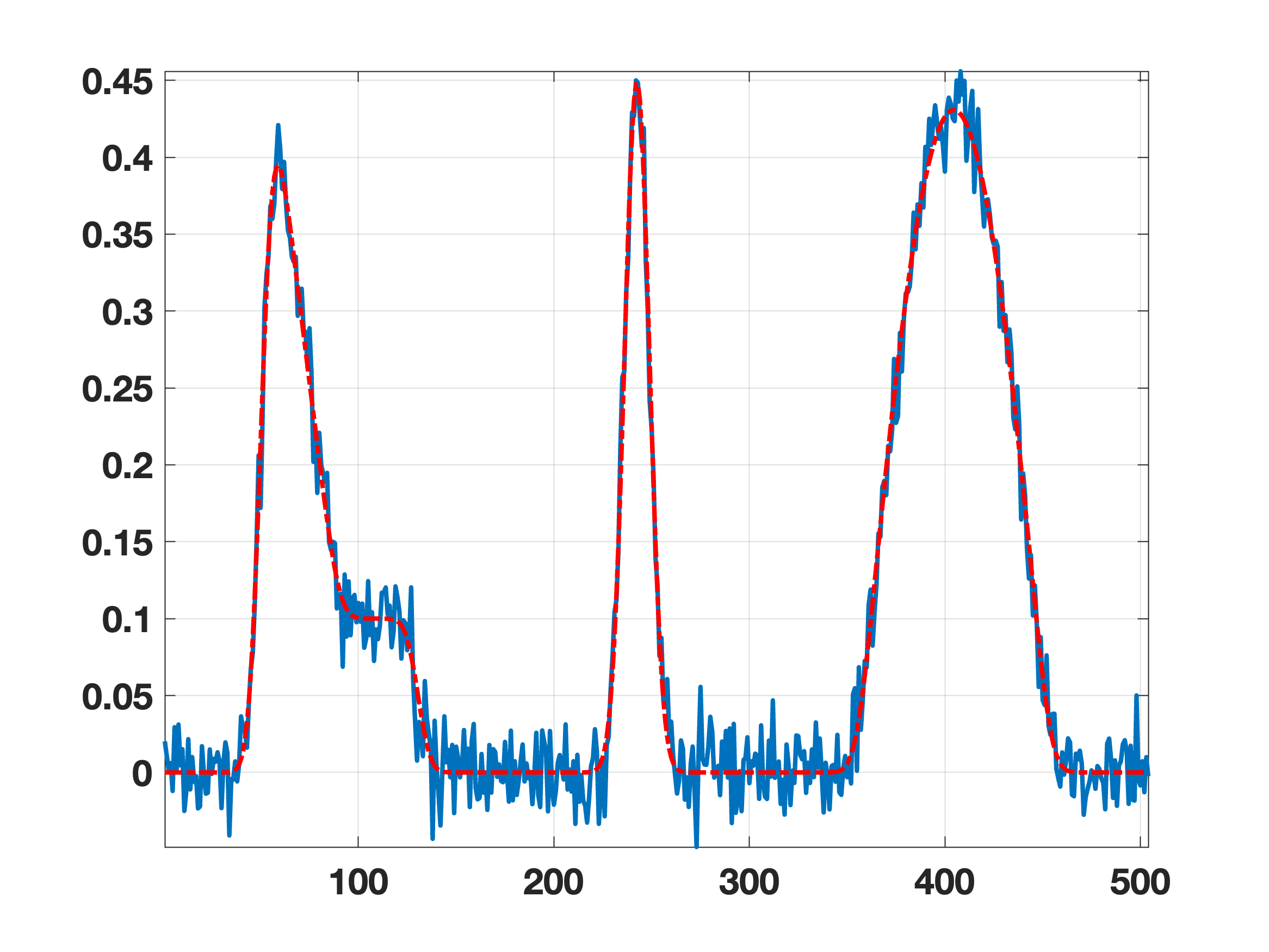}
\end{center}
\caption{1D test problems ($\delta = 0.1$). Blue line: blurred noisy data $\bb$, dashed red line: blurred data $\by$. Left {\tt T1}, center {\tt T2}, right {\tt T3}.}
 \label{fig:HN_tests}
\end{figure}

% (dashed red line for $\delta=0.01$ and $\delta=0.1$ respectively).
%
%Analogously to the low-noise case, we analyze the algorithms' behaviour and report the case of the T1 test. 
%
\begin{figure}[htbp]
\begin{center}
(a) \hspace{3cm} (b) \hspace{3cm} (c)\\
\includegraphics[width=4cm]{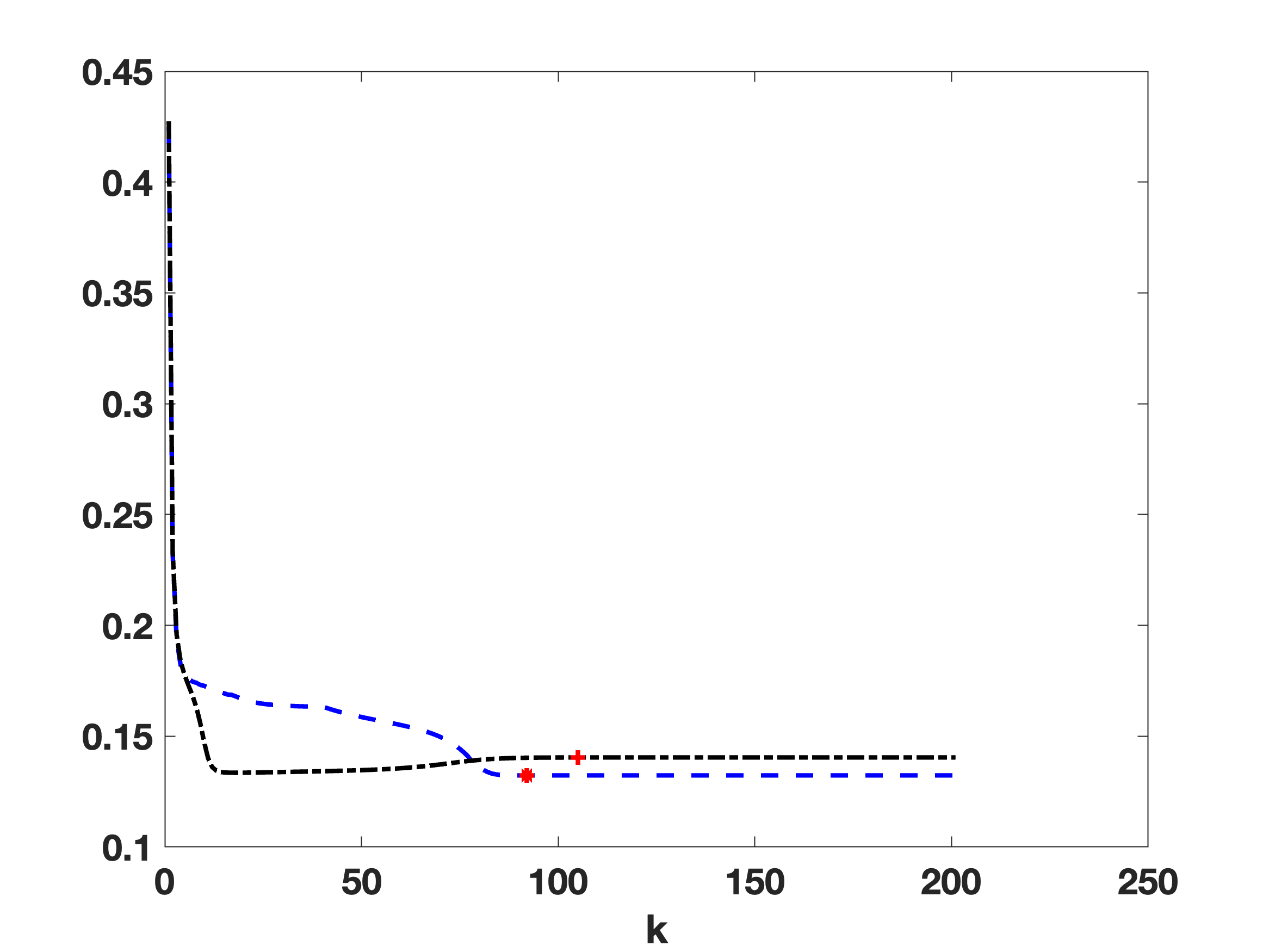}
\includegraphics[width=4cm]{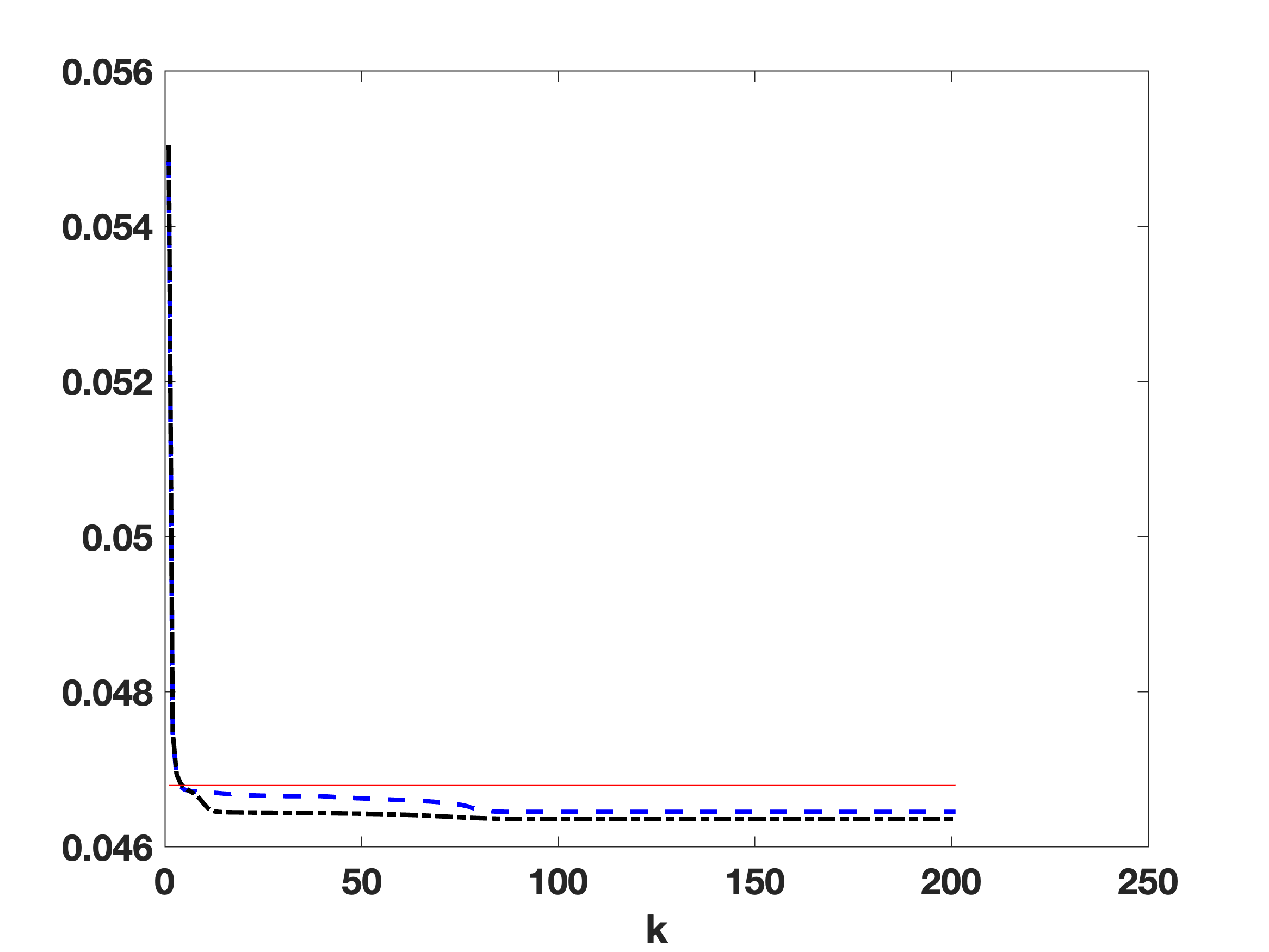}
\includegraphics[width=4cm]{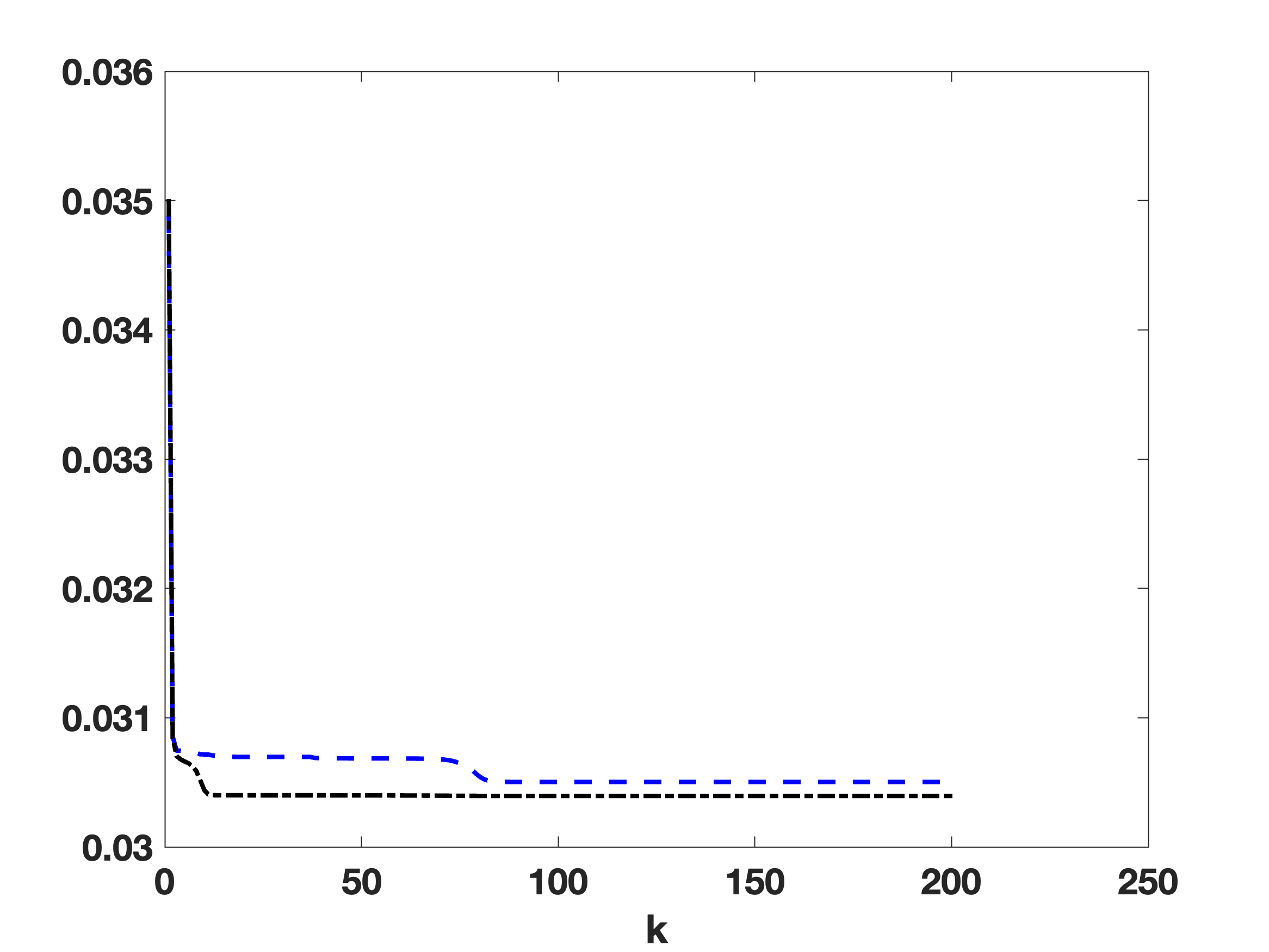}
\end{center}
\caption{T1 test problem ($\delta = 0.1$). (a) Relative error curves: Algorithm UPenMM (black dash-dot line) and Algorithm GUPenMM (blue dashed line). Red dots indicate values at the exit criterion \eqref{eq:SC}. (b) Residual norm behaviour: Algorithm UPenMM (black dash-dot line), Algorithm GUPenMM (blue dashed line), and noise norm (red line). (c) Surrogate function values: $Q(\widehat{\bl}^{(k+1)},\bl^{(k)})$ for Algorithm UPenMM (black dash-dot line) and $Q(\widetilde{\bl}^{(k+1)},\bl^{(k)})$ for Algorithm GUPenMM (blue dashed line).}
 \label{fig:T1_HN_curves}
\end{figure}
%
In Figure \ref{fig:T1_HN_curves}(a), we represent the relative error curves of Algorithm UPenMM (black dash-dot line) and Algorithm GUPenMM (blue dashed line) for T1. The red dots represent the relative error values corresponding to the {\tt stopping criterion} \eqref{eq:SC} with $Tol_{\lambda}=10^{-5}$. Compared to the low-noise case, a smaller value of the tolerance  is necessary to stop the algorithms close to their limit. While the error of Algorithm UPenMM decreases more rapidly in the initial iterations, Algorithm GUPenMM achieves a slightly smaller error in the limit. 
The good agreement of the residual curves with the noise norm is observed in Figure \ref{fig:T1_HN_curves}(b) where the black dash-dot line represents the residual norm values in Algorithm UPenMM,  the blue dashed line indicates the residual norm values in Algorithm GUPenMM and the red line represents the noise norm. 
The plot of the values of the surrogate function is shown in Figure \ref{fig:T1_HN_curves}(c).
%
\begin{figure}[htbp]
\begin{center}
(a) \hspace{5cm} (b) \\
\includegraphics[width=6cm]{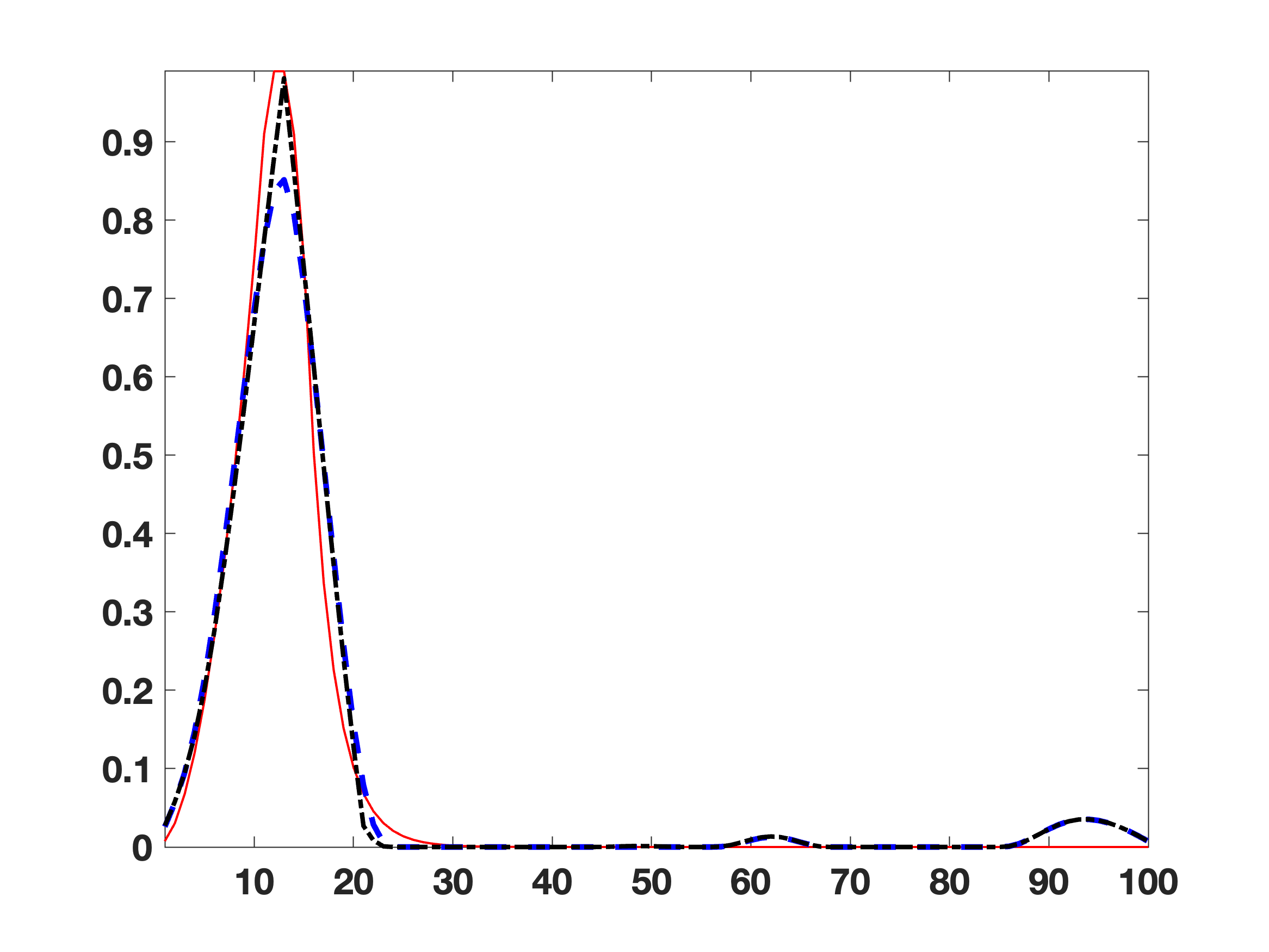}\hspace{-.6cm}
   \includegraphics[width=6cm]{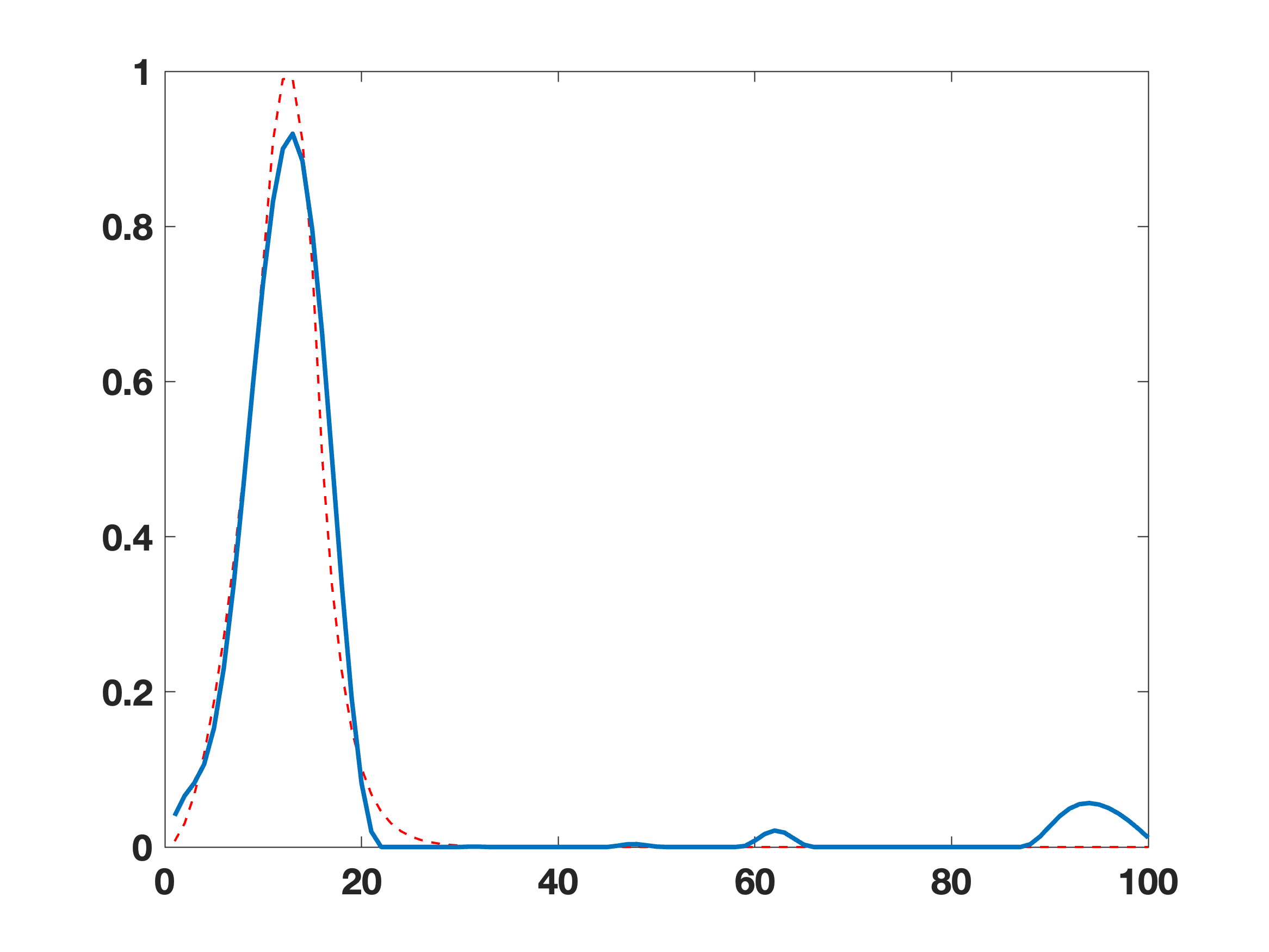}
\end{center}
\caption{Test problem {\tt T1} ($\delta = 0.1$). Solution $\bu$ with  $\Omega= \R^N_+$. (a) $\bu$ computed by Algorithm UPenMM (black dash-dot line), $\bu$ computed by Algorithm GUPenMM (blue dashed line) and ground-truth solution (red dashed line). (b) Solution computed by Tikhonov $L2 +$ (blue line)  with optimal parameter  $\lambda=1.5167 \ 10^{-3}$, and ground-truth solution (red dashed line).}
\label{fig:T1_sol_HN}
\end{figure}
%
Figure \ref{fig:T1_sol_HN}(a) shows the good reproduction of the high peak obtained from both algorithms,
%Algorithm UPenMM and Algorithm GUPenMM, 
with spurious oscillations in the flat region that, however, they are less pronounced compared to the Tikhonov $L2+$ solution with optimal regularization parameter (Figure \ref{fig:T1_sol_HN}(b)).
%
\begin{table}[h!]
{\small
\begin{center}
\begin{tabular}{|l|c|l|c|l|c|l|}
\hline
 \multirow{ 2}{*}{Method}   & \multicolumn{2}{|c|}{\texttt{T1}} & \multicolumn{2}{|c|}{\texttt{T2}} & \multicolumn{2}{|c|}{\texttt{T3}}\\
 \cline{2-7}
              & Rel Err  &   Iters          & Rel Err  &   Iters           & Rel Err  &   Iters  \\
\hline
 UPenMM + & 1.401$\cdot 10^{-1}$ & 105(169) &  1.081$\cdot 10^{-1}$ & 14(72) &  1.093$\cdot 10^{-1}$ & 15(89) \\
 GUPenMM + &  1.323$\cdot 10^{-1}$ & 92(171) &  8.068$\cdot 10^{-2}$ & 84(241) & 1.072$\cdot 10^{-1}$ & 361(825) \\
  L2  +    & 1.427$\cdot 10^{-1}$ & /  & 1.019$\cdot 10^{-1}$ & / &  1.270$\cdot 10^{-1}$ & / \\
\hline
\end{tabular}
\vspace*{3mm}
\caption{1D test problems ($\delta = 0.1$). Relative error and the corresponding number of iterations with $Tol_{\lambda}=10^{-5}$. The symbol $+$ indicates the constrained case. Between parenthesis, the number of Newton Projection iterations is reported. \label{tab:HN_errs}}
\end{center}
}
\end{table}
%
A similar behaviour is observed in tests T2 and T3, as depicted in Figures \ref{fig:T2_sol_HN} and \ref{fig:T3_sol_HN}. Table \ref{tab:HN_errs} shows that the smallest relative error is obtained by Algorithm GUPenMM (row GUPenMM $+$) in each case and that Tikhonov $L2 +$ produces the worst results. 
%
\begin{figure}[h!]
\begin{center}
(a) \hspace{5cm} (b) \\
\includegraphics[width=6cm]{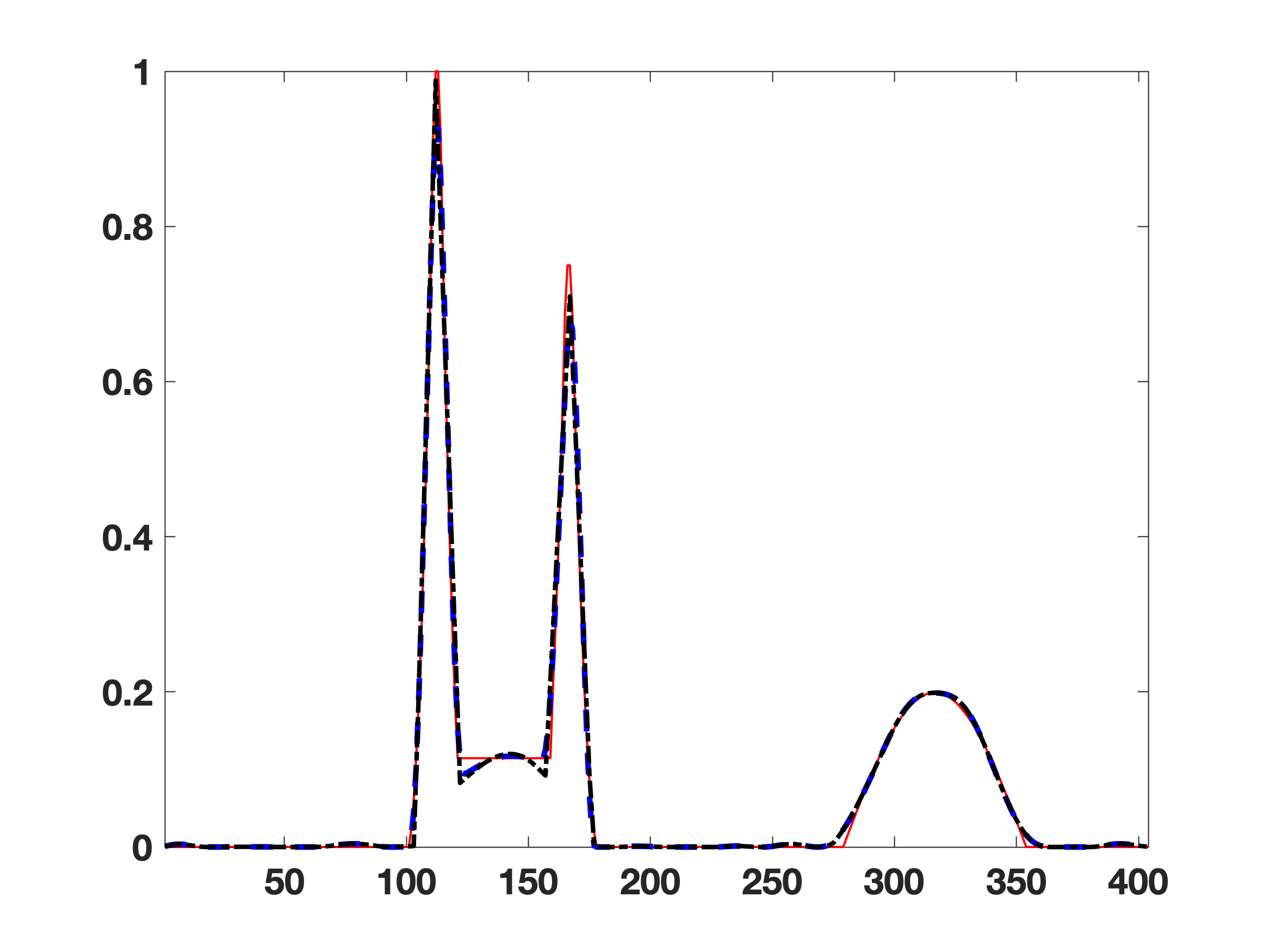}\hspace{-.6cm}
   \includegraphics[width=6cm]{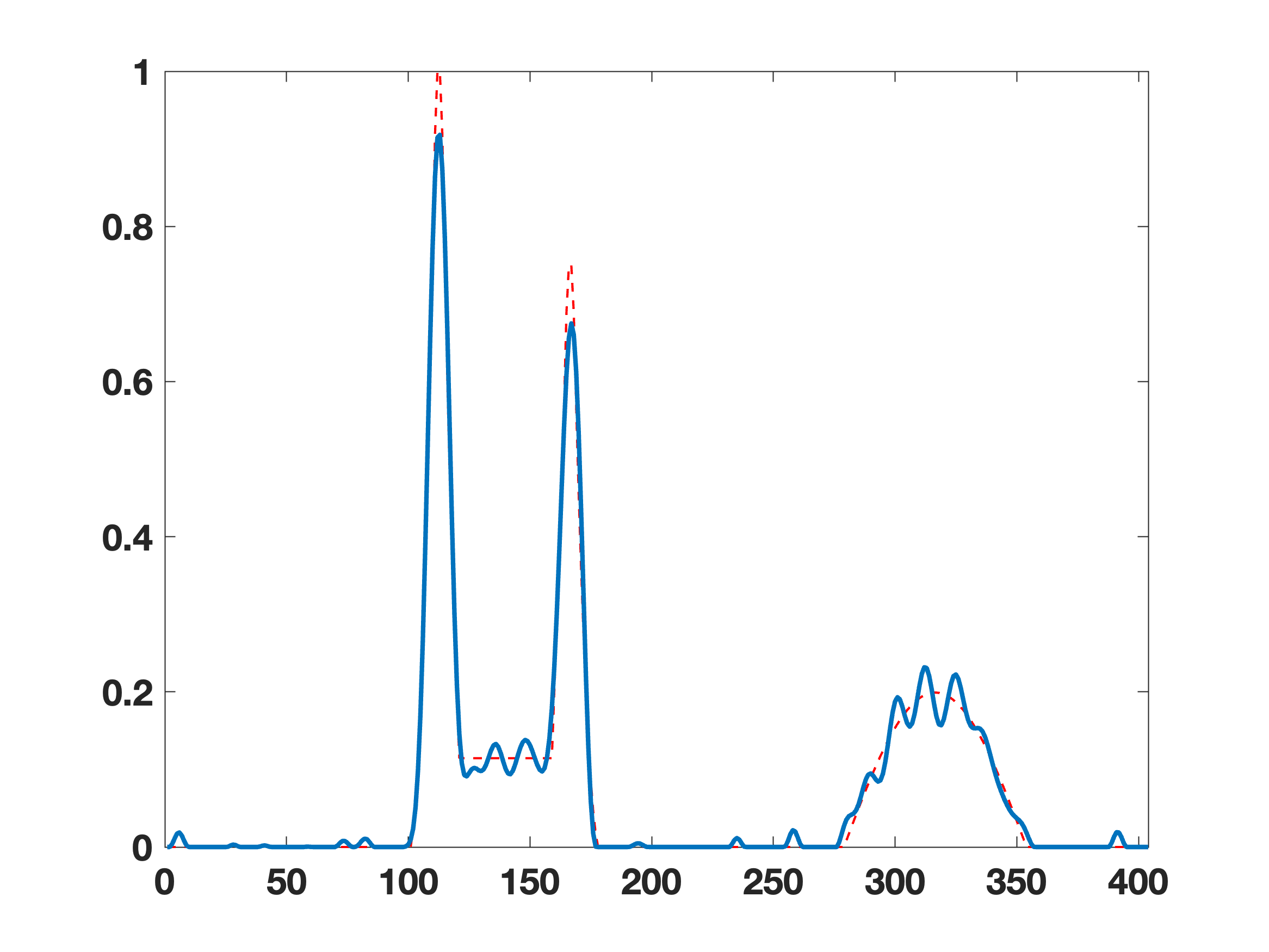}
\end{center}
\caption{Test problem {\tt T2} ($\delta = 0.1$). Solution $\bu$ with  $\Omega= \R^N_+$. (a) $\bu$ computed by Algorithm UPenMM (black dash-dot line), $\bu$ computed by Algorithm GUPenMM (blue dashed line) and ground-truth solution (red dashed line). (b) Solution computed by Tikhonov $L2 +$   (blue line) with  optimal parameter $\lambda=7.2326 \ 10^{-2}$, and ground-truth solution (red dashed line).}
\label{fig:T2_sol_HN}
\end{figure}
\begin{figure}[h!]
\begin{center}
(a) \hspace{5cm} (b) \\
\includegraphics[width=6cm]{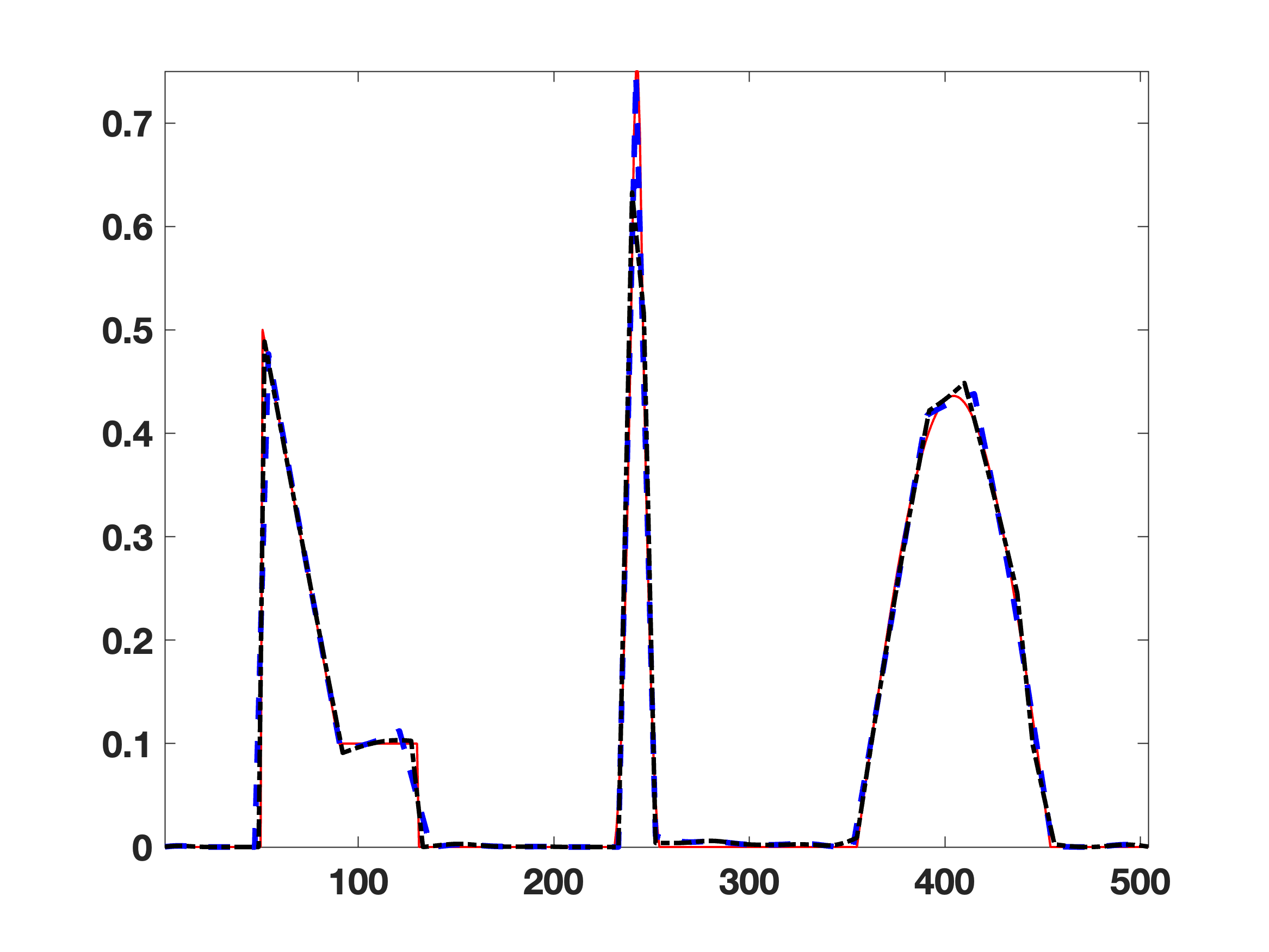}\hspace{-.6cm}
   \includegraphics[width=6cm]{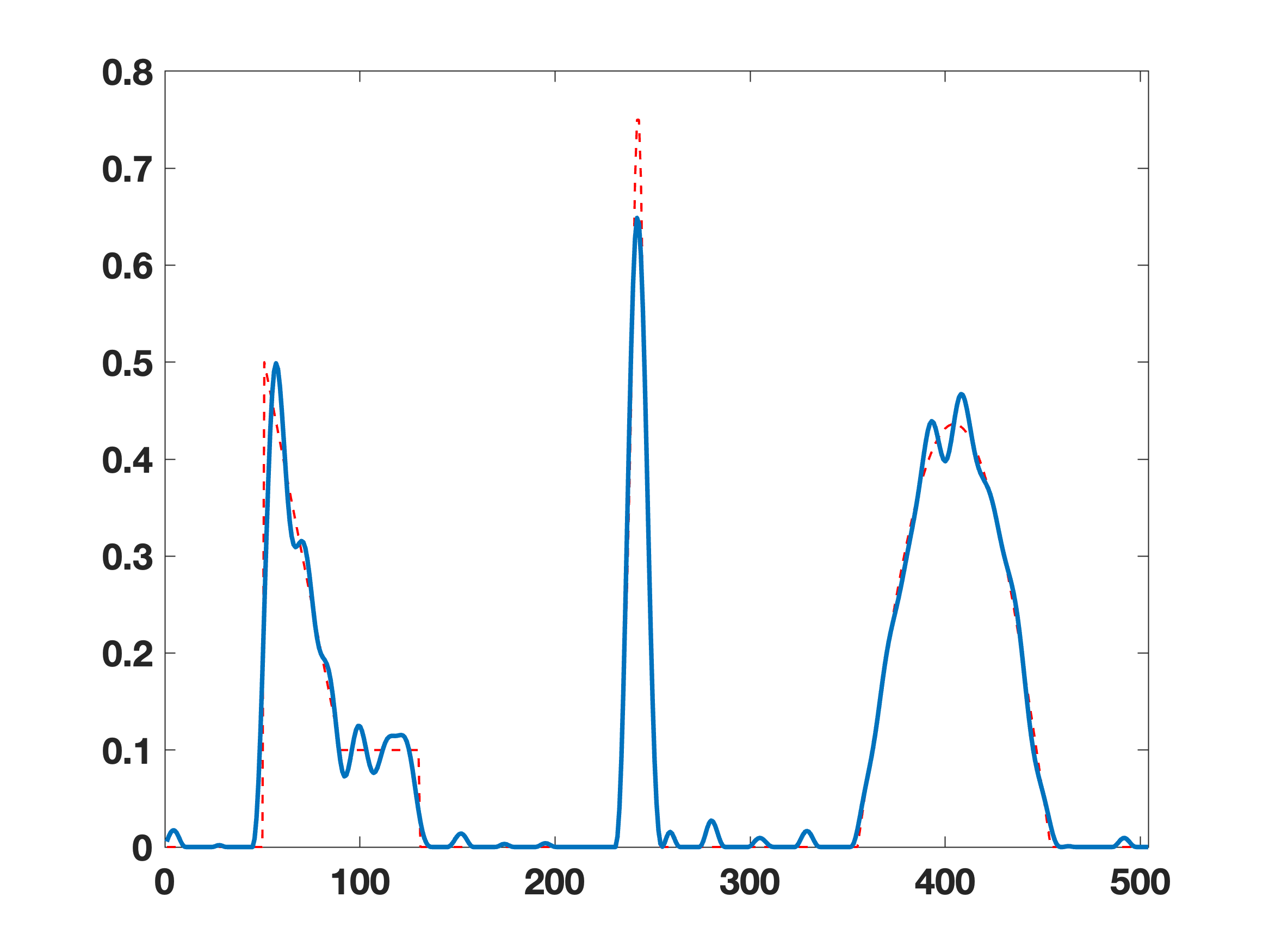}
\end{center}
\caption{Test problem {\tt T3} ($\delta = 0.1$). Solution $\bu$ with  $\Omega= \R^N_+$. (a) $\bu$ computed by Algorithm UPenMM (black dash-dot line), $\bu$ computed by Algorithm GUPenMM (blue dashed line) and ground-truth solution (red dashed line). (b) Solution computed by Tikhonov $L2 +$ (blue line) with optimal parameter $\lambda=6.0103 \ 10^{-1}$, and ground-truth solution (red dashed line).}
\label{fig:T3_sol_HN}
\end{figure}
%
Figure \ref{fig:HNregpar} represents the point-wise regularization parameter, confirming the characteristics already observed in the low-noise case.
\begin{figure}[h!]
\begin{center}
%(a) \hspace{5cm} (b) \\
\includegraphics[width=4cm]{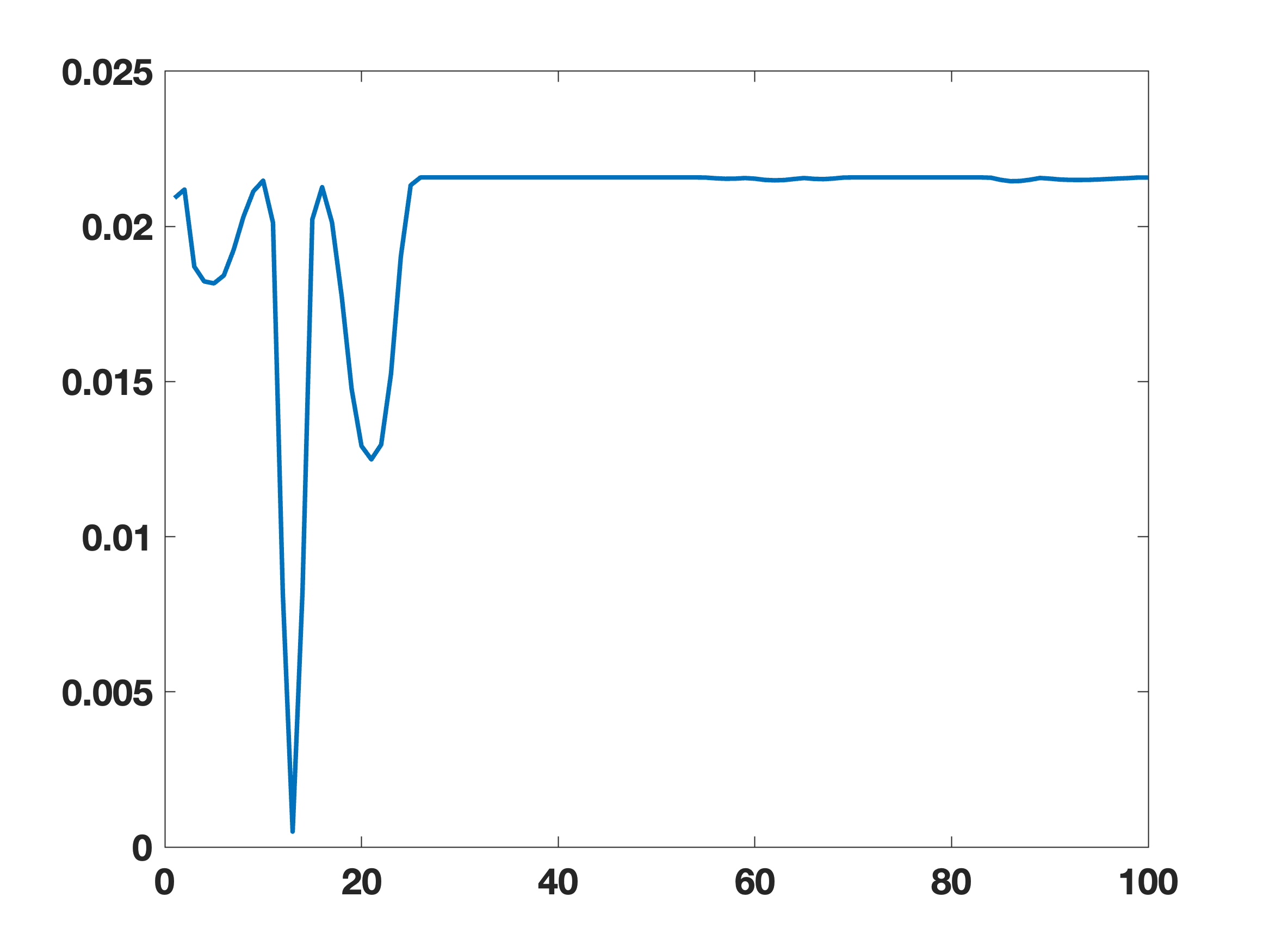}
\includegraphics[width=4cm]{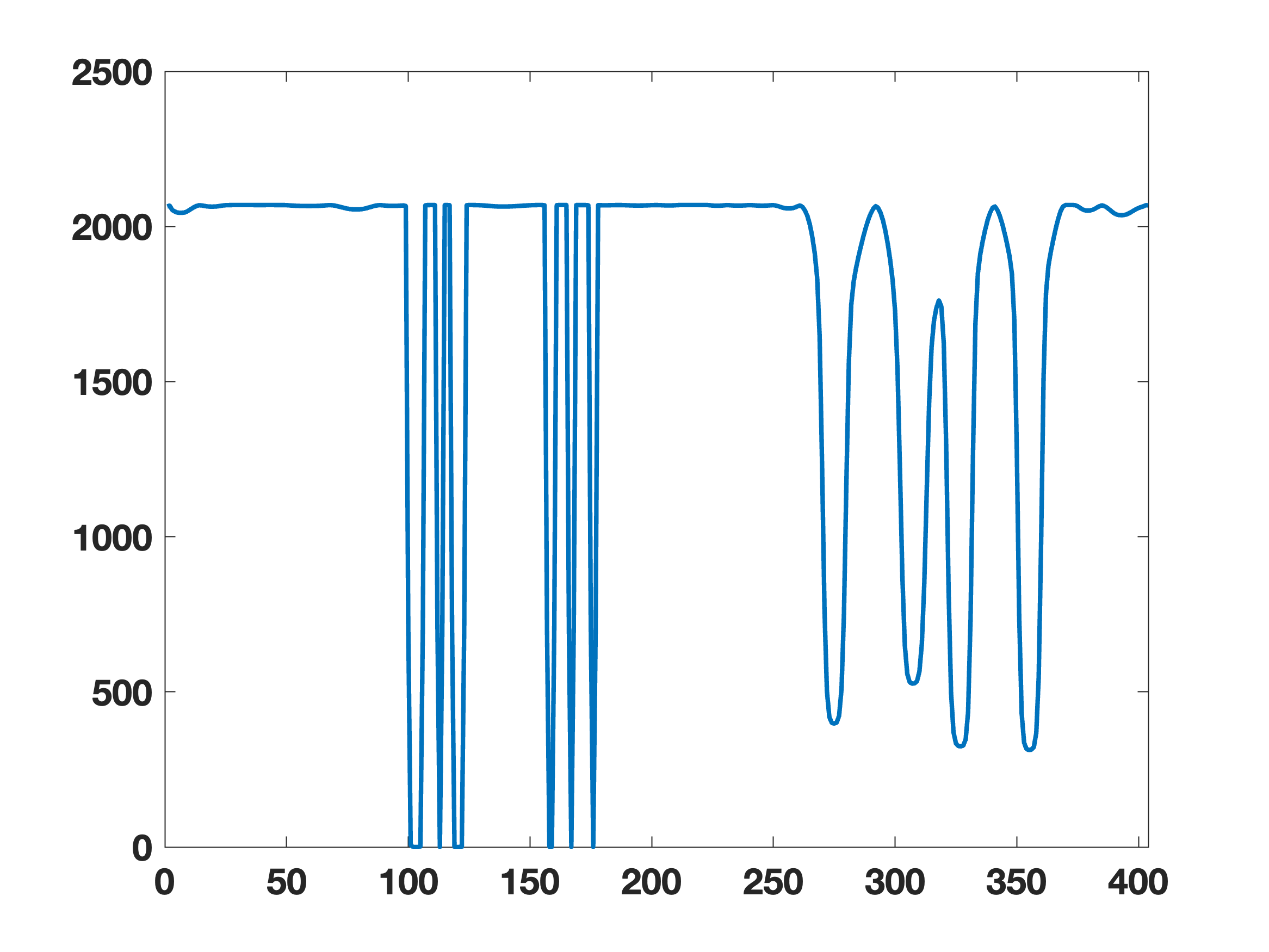}
\includegraphics[width=4cm]{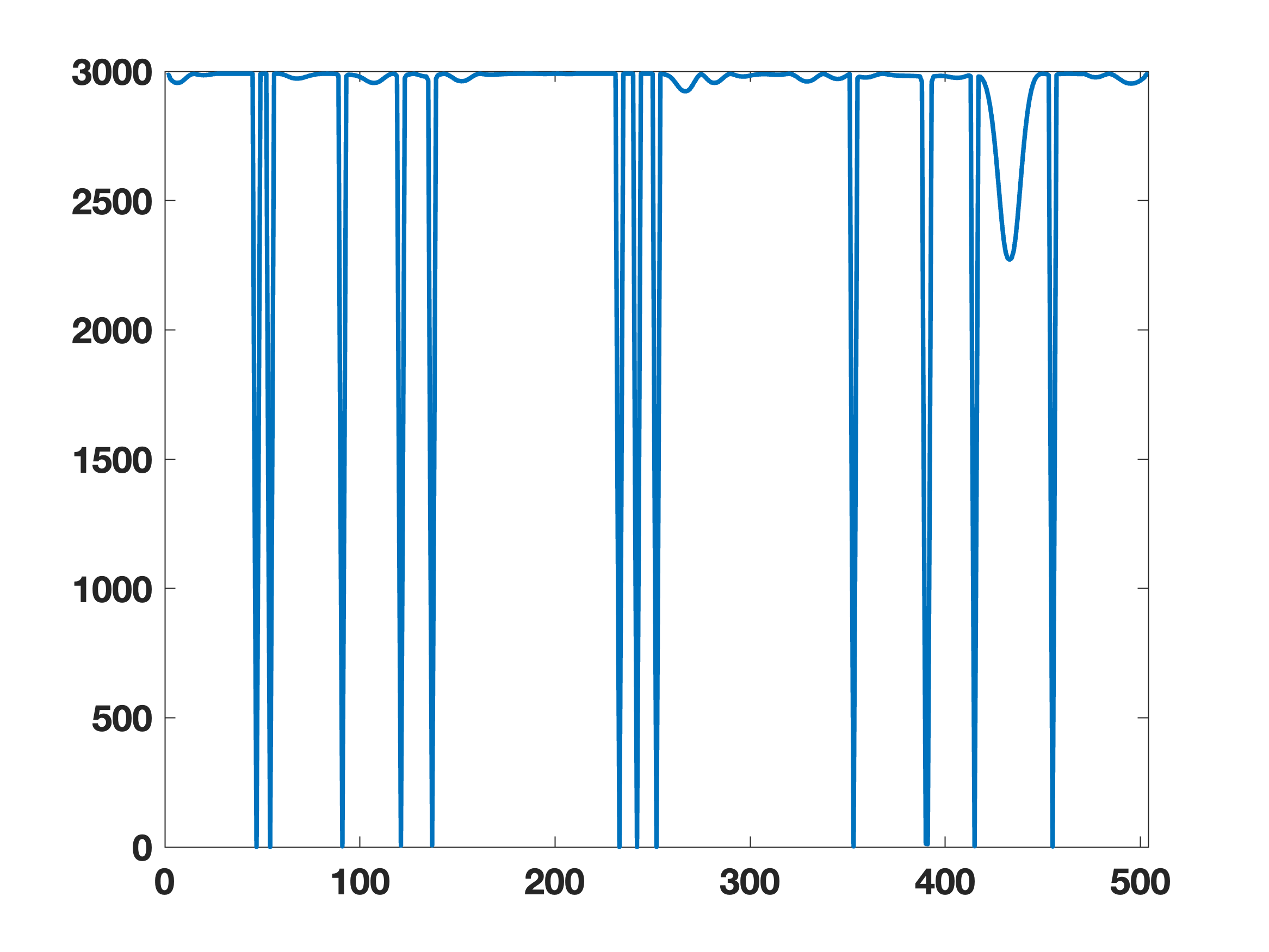}
\end{center}
\caption{1D test problems ($\delta = 0.1$). Regularization parameters computed by G\name{}. Left {\tt T1}, center {\tt T2}, right {\tt T3}.}
 \label{fig:HNregpar}
\end{figure}
Finally, concerning the computation times reported in Table \ref{tab:HNcomTimes}, we observe higher values compared to the low-noise case (Table \ref{tab:comTimes}) due to the use of a smaller tolerance  ($Tol_{\lambda}=10^{-5}$ \emph{vs} $Tol_{\lambda}=10^{-2}$ with low noise).  Additionally, GUPenMM $+$ is more expensive compared to UPenMM $+$.
%
\begin{table}[h!]
{\small
\begin{center}
\begin{tabular}{|l|ccc|}
\hline
 \multirow{ 2}{*}{Method}   & \multicolumn{3}{|c|}{Computation times}\\
 \cline{2-4}
              & T1  & T2  & T3  \\
\hline
% UPenMM   & 3.0076$\cdot 10^{-2}$  & 5.5257$\cdot 10^{-2}$ & 1.0887$\cdot 10^{-1}$ \\
 UPenMM + & 5.61$\cdot 10^{-2}$  & 3.02$\cdot 10^{0}\;\;$ & 5.58$\cdot 10^{0}\;\;$ \\
% GUPenMM  & 2.7472$\cdot 10^{-2}$  & 1.6594e$\cdot 10^{-1}$ & 3.0622$\cdot 10^{-1}$  \\
 GUPenMM +& 7.57$\cdot 10^{-2}$ & 9.36$\cdot 10^{0}\;\;$ &  4.50$\cdot 10^{1}\;\;$\\
\hline
\end{tabular}
\vspace*{3mm}
\caption{1D test problems ($\delta = 0.1$). Computation times in seconds.  The symbol $+$ indicates the constrained case. 
\label{tab:HNcomTimes}}
\end{center}}
\end{table}